\documentclass[english,11pt]{article}

\usepackage[german,french,english]{babel}
\usepackage[cp850]{inputenc}
\usepackage{latexsym,graphicx, fancybox}
\usepackage{graphicx}
\usepackage{amssymb, amsmath, amsfonts}
\usepackage{color}
\usepackage{mathabx}
\usepackage{latexsym}

\newcommand{\vertiii}[1]{{\left\vert\kern-0.25ex\left\vert\kern-0.25ex\left\vert #1 
    \right\vert\kern-0.25ex\right\vert\kern-0.25ex\right\vert}}

\newcommand{\ve}{\varepsilon}

\def\R{\mathbb{R}}

\def\P{\mathbb{P}}
\def\E{\mathbb{E}}

\def\N{{\rm I\hspace{-0.50ex}N} } 
\def\N{\mathbb{N}}

\newtheorem{lemma}{Lemma}[section]
\newtheorem{theorem}{Theorem}[section]
\newtheorem{corollary}{Corollary}[section]
\newtheorem{definition}{Definition}[section]
\newtheorem{proposition}{Proposition}[section]

\begin{document}
\title{\bf  Zador  Theorem for optimal quantization with respect to Bregman divergences}
\author{ 
{\sc Guillaume Boutoille} \thanks{Laboratoire de Probabilit\'es, Statistique et Mod\'elisation, Sorbonne Universit\'e, case 158, 4, pl. Jussieu, F-75252 Paris Cedex 5, France  \& Fotonower, 30 rue Charlot, F-75003 Paris. E-mail: {\tt  guillaume.boutoille@orange.fr}}
\and   
{\sc  Gilles Pag\`es} \thanks{Laboratoire de Probabilit\'es, Statistique et Mod\'elisation,  Sorbonne Universit\'e. E-mail: {\tt  gilles.pages@sorbonne-universit\'e.fr}}~\thanks{This research
benefited from the support of the ``Chaire Risques Financiers'', Fondation du Risque.} }

	\maketitle
	
	\begin{abstract} 
	We establish a Zador like theorem for $L^r$-optimal vector quantization when  the similarity measure is  a twice differentiable Bregman divergence of a strictly convex function. On our way we also prove a similar result when the Bregman divergence is replaced by a continuous matrix-valued vector field having values in the set of positive definite matrices. We adopt the strategy of the first fully rigorous proof of the original Zador' theorem (when the similarity measure is the power of  a norm). We have to overcome several difficulties which are specific to this framework especially concerning the so-called firewall lemma.
		\end{abstract}
	\paragraph{Keywords:} Bregman divergence ;  Optimal quantization ; Zador theorem, stationary quantizers ; 
	
	\medskip
\newtheorem{step}{Step}
\section{Introduction}
In computer vision, labeling represents a major cost that we aim to reduce as much as possible. Clustering algorithms are useful tools that aim to partition similar data into clusters in order to organize data set. Browsing these clusters allows a better visualisation of the data set and easier labeling. 

Clustering is a fundamental ``unsupervised" learning procedure that has been widely studied across many  disciplines. Most of the clustering methods consist in partitioning similar data into clusters with cluster representatives, also known as "codebook", such that codebook minimize loss function (quantization error). A widely used and studied clustering algorithm is the Euclidean $k$-means algorithm (\cite{macqueen67,jain88}). In~\cite{GrafL2000}, the authors  have shown that a discrete set of centers converges to a measure closely related to the underlying probability distribution.
The asymptotic analysis is important since we want to study a large set of centers that approximates the probability distribution of the data.
In this paper data are projection images onto hyperplanes generated by a neural network model with a large number of data. 
With some complex data, we might use different similarity measures to  partition i.e. classify the data set.

Bregman divergences are broad classes of  similarity measures indexed by  strictly convex functions. A lot of well-known   similarity measures like Euclidean, Mahalanobis, Kullback-Leibler and SoftPlus (a.k.a SoftAbs) divergences are particular cases of Bregman divergences. In $\R^d$, the Bregman divergence $\phi_{_F}$ induced by a strictly convex function $F:U\to \R^d$ is defined as
$$
\forall\, a,\, b\!\in U, \qquad \phi_{_F}(a,b) = F(a) - F(b) - \langle \nabla F(b)\,|\, a-b\rangle,
$$
where $\langle\cdot\,|\,\cdot \rangle$ is the inner product and $\nabla F(b)$ is the gradient of $F$ at $b$. Note that if $F(x)=|x|^2_{_S}:= x^*Sx$ is an Euclidean norm, $S\!\in{\cal S}^{++}(d,\R)$ (i.e. $S$  is symmetric and positive definite), then $\phi_{_F}(a,b)= (a-b)^*S(a-b)= |a-b|^2_{_S}$. 
 In~\cite{Banerjeeetal2005}, the authors have shown a close relation between regular Bregman divergences and log--likelihood of exponential families and   generalized the $k$-means clustering algorithm using  these similarity measures. This work gave rise  to a new field of research  about clustering and quantization where the loss function is a  Bregman divergence  in finite and infinite dimensions (see e.g.~\cite{Fischer2010} or~\cite{LiuBelNIPS2016}).
 
 The aim of this paper is to establish in a mathematically rigorous way the counterpart of Zador's Theorem in this framework i.e. a sharp rate of decay of the $(r, \Phi_{_F})$-mean  quantization error at rate $n^{-1/d}$ as the level $n$ of quantization goes to infinity. This the object of Theorem~\ref{thm:ZadorBregman} and Section~\ref{sec:proofZadorBregman}. Formally speaking, the sharp asymptotic  rate in a Bregman divergence setting for Zador's Theorem makes appear the Hesssian of $\Phi_{_F}$ in the  limiting constant: namely  $\big\|\det(\nabla^2 F)^{\frac{r}{2d}}\cdot h \big\|_{\frac{d}{d+r}}$ instead of $\|h\|_{\frac{d}{d+r}}$  where $h$ denotes the  density of the absolutely continuous component of the distribution $P$ to be quantized.  Our approach differs from that developed in~\cite{LiuBelNIPS2016} not only in terms of method of proof, but also as concerns assumptions.   These poins are discussed after the statement of theorem~\ref{thm:ZadorBregman}. We extend these result to fields of positive definite symmetric matrices in Section~\ref{sec:Zadorfieldmatrix}.

For recent developments on the original Zador  Theorem  i.e. when the similarity measure  is a power of a norm (see Theorem~\ref{thm:reg_zador}  in the next section), we refer to~\cite{LuPag23} which includes some recent improvement on the moment  assumption when the distribution $P$ is radial. 

Such  result   can be considered as intuitive if one thinks to the so-called Mahalanobis divergence $\phi(\xi,x)= (\xi-x)^*S(\xi-x)$ where $S\!\in {\cal S}^{++}(d, \R)$ (positive definite matrix, see Section~\ref{subsec:Mahalanobis} further on) which is in fact contained in the classical family of the norms as loss functions. A rather general result is   stated in an informal way \color{black}in the Neurips communication~\cite{LiuBelNIPS2016} and its supplementary material.  Their approach relies to a large extent on the fact that $\xi\simeq x$ in $\R^d$, then $\phi_{_F}(\xi,x) \simeq (\xi-x)\nabla^2F(X) (\xi-x)^{\top}$.  Ours directly considers the Bregman divergence which will lead to slightly different assumptions  on $F$. Some comments are provided as a remark right below the theorem. Our proof is   in line with that  developed for regular quantization (based on  powers of norms) in~\cite{GrafL2000} (see also some recent extensions in~\cite{LuPag23})  for the unbounded setting. The first fully rigorous proof when the distribution is compactly supported is due to~\cite{BucklewW1982}. But their extension to non-compactly supported distribution  using {\em companders} contains an unsolved  gap. Filling this gap needs an extra  argument based on a random quantization argument known as  Pierce's Lemma developed in~\cite{GrafL2000}. For a recent more sophisticate  version  of Pierce's Lemma that we use in our proof, see~\cite[Theorem~5.2$(b)$]{PagSpring2018} or~\cite[Corollary~2.1.13$(b)$]{LuPag23}.

The main difficulty   to overcome is that Bregman divergences, when there  are not Euclidean norms, are not {\em isotropic} which implies to control the underlying function $F$ and $\phi_{_F}$ carefully in the support   of the distribution $P$ under consideration. Moreover,  of course, a Bregman divergence  does not satisfy the triangle inequality. These features  have  a major impact on the so-called  {\em firewall lemma} \textcolor{black}{(Lemma~\ref{lem:FirewallLemma})} which is the key to establish the ``lower   side'' of  our  Zador like theorem on the sharp convergence rate. Let us have in mind that the purpose of this lemma is to provide a somewhat minimal set of points to be added to a quantizer to locally control the nearest neighbour searches. We propose a refined  version of this firewall lemma, adapted to Bregman divergence.  


The paper is organized as follows. Section~\ref{sec:DefsVoro} is devoted to some short background on $L^r$-optimal ``regular'' quantization with a focus on the original Zador's Theorem as stated in~\cite{GrafL2000}(when the loss function is the power of a norm).
 Section~\ref{sec:ZadorBreg} is devoted to the proof of Zador's Theorem for $(r, \phi_{_F})$-optimal quantization w.r.t. a Bregman  divergence, under some appropriate assumptions (thus we request a sub-quadratic behaviour at infinity when the distribution to be quantized has an unbounded support). This lengthy section is divided in several steps which follow the structure of the  proof from~\cite{GrafL2000} of Zador's Theorem in the regular framework. An appendix is devoted to the proof of the key lemma of the  $\liminf$ side of the proof of the Theorem, that  is the {\em firewall lemma}. \color{black}{We conclude by Section~\ref{sec:Zadorfieldmatrix} devoted to the variant where we replace Bregman divergence by a (continuous and bounded) matrix valued field $x\mapsto S(x)\!\in {\cal S}^{++}(d, \R)$  leading to the  similarly measure $(\xi-x)S(x)(\xi-x)^{\top}$. Although we enhanced Bregman divergence on purpose in this chapter~--~and throughout the manuscript~--~ one may plead that, for applications, matrix fields  is a more convenient tool to introduce {\em anisotropy} in optimal vector quantization theory.
 \color{black}

\section{Definitions and background on $L^r$-optimal quantization w.r.t. a norm}\label{sec:DefsVoro}

\subsection{Short background on regular $L^r$-optimal vector quantization} 
 
Let $P$ be a probability measure on $(\R^d,{\cal B}or(\R^d))$ supported by a nonempty open convex set $U$  in the sense that $P(U)=1$.


\begin{definition}[Quantization error] Let  $r>0$ and let $\|\cdot\|$ denote any norm on $\R^d$. 

\noindent $(a)$ Let $\Gamma  \subset \R^d$ be a finite subset of $U$ (also called {\em quantizer}). We define the $L^r$-mean quantization error of the distribution $P$ induced by $\Gamma$ (with respect to the norm $\|\cdot\|$) by 
\begin{equation*}
    e_{r,n}(\Gamma,P,\|.\|) = \left[\int_{\mathbb{R^d}}\min_{a\in\Gamma} \|\xi-a\|^rP(\mathrm{d}\xi)\right]^{\frac{1}{r}}.
\end{equation*}

\noindent $(b)$ The  $L^r$-optimal mean quantization error for $P$ at level $n\ge 1$ is defined by 
\begin{equation*}
    e_{r,n}(P,\|.\|) = \inf_{|\Gamma|\leq n}  e_r(\Gamma,P,\|.\|).
\end{equation*}
\end{definition}

If one considers any random vector $X:(\Omega,{\cal A}, \P)\to U \subset  \R^d$ with distribution $\P_{_X}= P$, then
\[
   e_r(\Gamma,P,\|.\|) =    e_r(\Gamma,P,\|.\|) := \big\| \min_{a\in\Gamma} \|X-a\|\big\|_{L^r(\P)}= \left[ \E\, \min_{a\in \Gamma}\|X-a\|^r\right]^{1/r}
\]
and we may  often denote $e_{r,n}(X,\|.\|) =e_{r,n}(P,\|.\|) $ accordingly.
 
\bigskip
We will recall in Section~\ref{subsec:RecallExist} below  some theorems of existence of optimal quantizers for the Bregman divergence based $(r,\phi_{_F})$-quantization errors (those recalled, revisited  or extended in~\cite{BoutPag1}). In fact we establish several results depending on the quantization level ($n=1$ versus $n\ge 2$) and the power of the Bregman divergence ($r=2$ versus $r>2$).   Recalling these results is natural since one aim of optimal quantization, whatever the loss function is, is to produce optimal quantizers and evaluate their performances which are, for a given distribution $P$ and a given Bregman divergence $\phi_{_F}$, $n$ and $r\ge 2$ being fixed those of $e_{r,n}(P, \phi_{_F})$. However we will never make use of such $(r,\phi_{_F})$-optimal quantizers in the proofs (so that our ``\`a la Zador theorem'' holds for $r>0$).

\subsection{Sharp rate for $L^r$-optimal quantization : Zador's  theorem}


The so-called Zador Theorem is the main and central result in ``classical'' optimal quantization. It   elucidates in   great generality the sharp rate of decay  of the $L^r$-optimal mean quantization errors $e_{n,r}(P,\|\cdot\|)$ to $0$. This problem first tackled by Zador in his PhD thesis~(\cite{ZadorPhD})  in the 1960's (and finally published in~\cite{ZadorIEEE} in the late 1980's), essentially for the uniform distributions on the unit hypercube, then it was  extended by Bucklew \& Wise  to distributions having enough finite moments (but with a gap in the proof), see~\cite{BucklewW1982}. It was finally proved rigorously for the first time by Graf \& Luschgy in~\cite[Section 6.2]{GrafL2000}. We reproduce below for the reader convenience Graf \& Luschgy's version from~\cite[Section 6.2]{GrafL2000}.  In~\cite{GrafL2000}, the theorem is stated for $r\ge 1$ but a careful reading of the proof shows that it holds true for any $r\!\in (0,+\infty)$ as emphasized (and detailed in~\cite{LuPag23}). The result was again  generalized, only for some  specific classes of distributions sharing radial features in~\cite{LuPag23} for which it was proved that the integrability condition could be lowered down to the minimal finiteness of the $L^r$-moment for $L^r$-quantization.   In the present chapter we  extend in  Theorem~\ref{thm:ZadorBregman}$(a)$ the result from~\cite[Section 6.2]{GrafL2000} to the case where the {\em loss function}  is a Bregman divergence as defined in the next section. Claim~$(b)$ is in fact a by product of the proof of Claim~$(a)$ up to an application of Beppo Levi's theorem in both settings.

\begin{theorem}[Zador, Graf \& Luschgy 2000, Luschgy \&Pag\`es 2023]\label{thm:reg_zador} Let $r>0$ and let $\|\cdot\|$ denote any norm on $\mathbb{R}^d$. 

\smallskip
\noindent $(a)$  Assume $\int_{\mathbb{R}^d}\|\xi\|^{r+\delta}P(d\xi)< + \infty$ for some $\delta>0$.

\begin{equation}\label{eq:sharpZador}
    \lim_{n\rightarrow+\infty} n^{\frac 1d}e_{n,r}(P,\|.\|) = Q_r([0,1]^d,\|.\|)^{\frac 1r} \big \|h\big \|^{\frac 1r}_{L^{\frac{d}{d+r}}(\lambda_d)}
\end{equation}
where $h=\frac{\mathrm{d}P^a}{\mathrm{d}\lambda_d}$ denotes the density of the absolutely continuous part $P^a$ of $P$ with respect to the Lebesgue  measure $\lambda_d$ on $\mathbb{R}^d$. Furthermore
\begin{equation}
    Q_r([0,1]^d,\|.\|) := \inf_{n\geq1} n^{\frac rd} e_{r,n}([0,1]^d,\|.\|)^r = \lim_{n\geq1} n^{\frac rd} e_{r,n}([0,1]^d,\|.\|)^r  \!\in (0, +\infty)
\end{equation}
corresponds to the case $P= U\big([0,1]^d\big)$. 

When $P$ is radial in the sense that $P=P\circ O^{-1}$ (image of $P$ by $O$) for every orthogonal matrix $O$, the result is still true under of the moment assumption is only satisfied with $\delta =0$.

\smallskip
\noindent $(b)$ For any distribution $P$ on $(\R^d, {\cal B}or(\R^d))$,
\begin{equation}\label{eq:liminfZador}
    \liminf_{n\rightarrow+\infty} n^{\frac 1d}e_{n,r}(P,\|.\|) \ge Q_r([0,1]^d,\|.\|)^{\frac 1r} \big \|h\big \|^{\frac 1r}_{L^{\frac{d}{d+r}}(\lambda_d)}
\end{equation}

\noindent $(c)$ When $d=1$  and $P=U([0,1])$, then for every $r>0$, the $L^r$-optimal quantizer is unique and 
\[
\Gamma^{(r,n)}= \Big\{\frac{2k-1}{2n}, \; k=0,\ldots,n\Big\}.
\]
and, for every $n\ge 1$,
\color{black}
\[
e_{r,n}\big(U[(0,1]),|\cdot|)= \frac{1}{2n(r+1)^{1/r}} \quad \mbox{ so that }\quad Q_r([0,1],|\cdot|)= \frac{1}{2^r(r+1)^{1/r}}.
\]
 \color{black}
 \end{theorem}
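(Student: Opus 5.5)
We follow the Graf \& Luschgy route from~\cite[Section 6.2]{GrafL2000}, checking at each step that nothing uses $r\ge 1$ (for $r\in(0,1)$ the only change is that $\|\cdot\|^r$ is subadditive rather than satisfying Minkowski's inequality, and we use the $r$-triangle inequality $(a+b)^r\le a^r+b^r$ instead).

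The first step is the uniform case. We show that $n\mapsto n^{r/d}e_{r,n}([0,1]^d)^r$ converges by a self-similarity argument. Split $[0,1]^d$ into $m^d$ congruent subcubes and put on each a scaled copy of an optimal $n$-quantizer. This gives
\[
e_{r,m^dn}^r\le m^{-r}e^r_{r,n},
\]
and a monotone interpolation between consecutive values of $m^dn$ yields $\limsup \le \inf$, hence the limit $Q_r$. Positivity $Q_r>0$ follows from the volume bound: $n$ balls of radius $\varepsilon$ cover mass at most $Cn\varepsilon^d$, so $e_{r,n}^r\ge c\,n^{-r/d}$. Claim $(c)$ is a direct one-dimensional computation. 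Stationarity forces every point to be the midpoint of its cell (first-order conditions for $\int|\xi-a|^r$ on intervals). A Lagrange or convexity argument on $\sum_i \ell_i^{r+1}$ under $\sum_i\ell_i=1$ then forces equal cells of length $1/n$, giving the stated value.

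The second step is the upper bound for $(a)$. For $P$ uniform on a cube one has $Q_r\lambda(C)^{r/d}$ by scaling. For $P=\sum_i s_i U(C_i)$ with finitely many disjoint cubes, allocate $n_i\propto (s_i\lambda(C_i)^{r/d})^{d/(d+r)}$ points. H\"older optimization of $\sum_i s_i n_i^{-r/d}\lambda(C_i)^{r/d}$ gives exactly $Q_r\|h\|_{d/(d+r)}$. A general $h$ with compact support is approximated in $L^1$ by such step densities, using the elementary mixing inequality
\[
e_{r,n+m}^r(sP_1+(1-s)P_2)\le s\,e^r_{r,n}(P_1)+(1-s)e^r_{r,m}(P_2)
\]
and continuity of $\|\cdot\|_{d/(d+r)}$ on the approximants. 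The singular part and the tails are absorbed by spending only $\varepsilon n$ points there. For the tails we use Pierce's lemma, $e_{r,n}(Q)\le C\,n^{-1/d}\big(\int\|\xi\|^{r+\delta}dQ\big)^{1/(r+\delta)}$~\cite[Theorem~5.2$(b)$]{PagSpring2018}, which makes the tail contribution $o(1)$ after multiplying by $n^{r/d}$. For the compact singular part, a set of small Lebesgue measure is covered by few cubes, so its contribution is also small. Letting $\varepsilon\to0$ closes the upper bound.

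The third step is the lower bound $(b)$, which I expect to be the main obstacle. The difficulty is that restricting an optimal quantizer of $P$ to a cube $C$ does not give a quantizer for $P(\cdot\mid C)$: points of $C$ may be served by centers outside it. The remedy is the firewall lemma. Add to the quantizer a grid of $O(\varepsilon n)$ points on a thin shell around the boundary of each cube, so that each point of $C$ is served as well by centers in a slight enlargement of $C$, at a cost only $\varepsilon$-small in the normalized error. This gives superadditivity over step approximations:
\[
\liminf_n n^{r/d}e_{r,n}^r(P)\ge Q_r\sum_i \big(s_i\lambda(C_i)^{r/d}\big)^{d/(d+r)\cdot(d+r)/d}
\]
after a reverse-H\"older optimization over the fractions $n_i/n$. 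Passing to general $h$ then goes through monotone truncation ($h\wedge k$ on bounded sets) and Beppo Levi, and no moment assumption is needed since we only use $P\ge P^a|_{\text{compact}}$.

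The final step is the radial case with $\delta=0$. Here Pierce's lemma is replaced by a direct construction exploiting radial symmetry, using spherical shells with geometrically growing radii, as in~\cite{LuPag23}. I would cite that reference rather than reprove it.
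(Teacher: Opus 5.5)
The paper does not prove this theorem; it quotes it from Graf--Luschgy and Luschgy--Pag\`es. Your plan follows the same Graf--Luschgy scheme that the paper reproduces for Bregman divergences in its proof of the Zador-like theorem, and most of it is sound. The genuine gap is in your lower-bound step.

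As you set it up, the wall guarantees that each point of a partition cube $C_i$ is served by centers in an \emph{enlargement} $C_i^{+w}$ of $C_i$. Enlargements of adjacent cubes overlap, so a center of $\Gamma$ within $w$ of a common face is charged to up to $2^d$ cubes. You only get $\sum_i|(\Gamma\cup W)\cap C_i^{+w}|\le|\Gamma|+|W|+(2^d-1)\,|\Gamma\cap S_w|$, where $S_w$ is the union of the shells. Nothing bounds $|\Gamma\cap S_w|$ by a small multiple of $n$: for an arbitrary $n$-quantizer all its points may lie in $S_w$, and for near-optimal ones you would need a point-density result you do not have at this stage. After reverse H\"older this costs a constant factor, so the claimed superadditivity does not follow.

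The remedy used by Graf--Luschgy, and by the paper in its firewall lemma and Step~4, is to \emph{shrink} rather than enlarge. For the concentric cube $C_{i,\varpi}\subset\mathring C_i$ there is a finite set $\gamma_i\subset\partial C_{i,\varpi}$ such that $\min_{a\in\gamma_i}\|\xi-a\|\le\inf_{x\notin C_i}\|\xi-x\|$ for all $\xi\in C_{i,\varpi}$. Its cardinality $\nu$ depends only on $\varpi$, the side length, $d$ and the norm, not on $n$. (Take a fine net of $\partial C_{i,\varpi}$ and use the point where $[\xi,x]$ leaves $C_{i,\varpi}$.) Discarding the mass of $C_i\setminus C_{i,\varpi}$ then gives, for $P_m=\sum_i s_i\,\mathcal{U}(C_i)$ and $n_i=|\Gamma\cap C_i|$,
\[
e_r(\Gamma,P_m)^r\ \ge\ \sum_i s_i\,\frac{\lambda_d(C_{i,\varpi})}{\lambda_d(C_i)}\,e_{r,n_i+\nu}\big(\mathcal{U}(C_{i,\varpi}),\|\cdot\|\big)^r .
\]
Since the sets $\Gamma\cap C_i$ are disjoint, $\sum_i(n_i+\nu)\le n+N\nu$, with $N$ the fixed number of cubes. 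You then take $\liminf_n$, apply reverse H\"older, and only afterwards let $\varpi\to0$. No $\varepsilon n$ wall points are needed here; the $\varepsilon n$-point grid belongs to the comparison of $P$ with $P_m$. If you want to keep enlargements, you must add an argument trading the centers of $\Gamma$ in the shells for nearby points of a dense wall, with shell width small compared with $\varepsilon$. None of this is in your sketch.

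Three smaller points.

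\emph{Non-step densities in the lower bound.} ``Monotone truncation $h\wedge k$ and Beppo Levi'' does not take you from step densities to a general compactly supported $h$. Cube-step functions lying below $h$ need not approximate it: if $h$ is the indicator of a closed nowhere dense set of positive measure, every such step function vanishes a.e. You need the $L^1$ comparison in the lower-bound direction as well. For instance, $P_m\le P+(h_m-h)^+\lambda_d$, together with your mixing inequality and a grid of at most $\varepsilon n$ points on the bounding cube, yields $e_{r,n+\lfloor\varepsilon n\rfloor}(P_m)^r\le e_{r,n}(P)^r+C\,\varepsilon^{-r/d}n^{-r/d}\|h_m-h\|_{L^1(\lambda_d)}$. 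Truncation and Beppo Levi are then only needed to remove the compact-support restriction.

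\emph{Claim $(c)$.} Do not conclude ``the stated value''. The index must run over $k=1,\ldots,n$. Moreover $e_{r,n}=\frac{1}{2n(r+1)^{1/r}}$ gives $Q_r([0,1],|\cdot|)=\frac{1}{2^r(r+1)}$, not $\frac{1}{2^r(r+1)^{1/r}}$.

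\emph{Tails.} Pierce's lemma does not make the tail term $o(1)$ in $n$. It bounds it uniformly in $n$ by $C\varepsilon^{-r/d}P(K^c)^{\delta/(r+\delta)}\big(\int\|\xi\|^{r+\delta}dP\big)^{r/(r+\delta)}$, which vanishes as $K$ grows. So the limits must be taken in the order $n$, then $K$, then $\varepsilon$.
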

%
\noindent {\sc Notation.} To alleviate notation,  when the norm $\|\cdot\|= |\cdot|$ is the canonical Euclidean norm on $\R^d$, we will denote
\begin{equation}\label{eq:Q_r}
Q_r([0,1]^d)=Q_r([0,1]^d,|\cdot|).
\end{equation}

\smallskip
 \noindent  {\bf Remarks} $\bullet$ When  $d=2$  and $P=U([0,1]^2)$, then (see~\cite{Newman1982})
 \[
 Q_2([0,1]^2) = \frac{5}{18\sqrt{3}}.
 \]
which is closely connected with the tiling of $\R^2$ by regular Hexagons.  When  $d=3$, the fact that  
 \[
 Q_2([0,1]^3) = \frac{19}{64\sqrt[3]{2}}
 \]
 still stands as a conjecture as our best knowledge and corresponds to the tiling of \textcolor{black}{$\R^d$} by truncated octahedrons.
 
 \smallskip
 \noindent $\bullet$  If the absolutely continuous part $P^a$ is zero then Zador's theorem still holds as it is (with $h\equiv 0$). This shows that the proposed asymptotics is not the right one in the sense the one where the limit, if any, is non-trivial.

 \smallskip
\noindent $\bullet$ It is proved still in~\cite[Remark~6.3 of Section 6.2]{GrafL2000} that 
$$
\int_{\mathbb{R}^d}|\xi|^{r+\delta}P(\mathrm{d}\xi) <+\infty \;\Longrightarrow\; h\!\in L^{\frac{d}{d+r}}(\lambda_d)
$$
Indeed this  is a consequence of H\"older inequality applied to 
\[
h(\xi)^{\frac{d}{d+r}}= (1+|\xi|)^{-\frac{r+\delta}{d+r}d}\times  (1+|\xi|)^{\frac{r+\delta}{d+r}d}h(\xi)^{\frac{d}{d+r}}
\]
with conjugate exponents $p= 1+\frac dr $ and $q= 1 +\frac rd$ so that
\[
\int_{\R^d}h(\xi)^{\frac{d}{d+r}}\mathrm{d}\xi \le \left(\int_{\R^d} (1+|\xi|)^{-\frac{r+\delta}{r}d}\mathrm{d}\xi\right)^{\frac{r}{d+r}}\left( \int_{\R^d}(1+|\xi|)^{r+\delta}h(\xi) \mathrm{d}\xi\right)^{\frac{d}{d+r}}<+\infty.
\]

\noindent $\bullet$ The (easy) case $0<r<1$ is detailed in~\cite{LuPag23}.

\noindent $\bullet$  In~\cite[Theorem~2.1.3, Chapter~2]{LuPag23}, it is shown that regardless of the integrability condition, one always has
\begin{equation*}\label{eq:liminfZadorbis}
    \liminf_{n\rightarrow+\infty} n^{r/d}e_{n,r}(P,\|.\|)^r \ge  Q_r([0,1]^d)\big \| h\big \|_{L^{\frac{d}{d+r}}(\lambda_d)}.
\end{equation*}

\noindent 
$\bullet$  Finally, one can also prove (see~\cite{LuPag23}) that, if $P$ is radial  outside a compact set  i.e. $P= h(\xi) \lambda_d(\mathrm{d}\xi)$ with $h(\xi)= g(\|\xi\|_0)$, $\|\xi\|_0>A$ for some $A\ge 0$, then the above sharp quantization rate of decay holds under  the less stringent  assumption that $P$ has a finite (absolute)  moment of order $r>0$.


\smallskip
\noindent $\bullet$  In view of what follows with Bregman divergence it is interesting to inspect a variant of the above result: if the norm under consideration is an Euclidean norm denoted $|\cdot|_{e}$ and $P$ is supported by a (nonempty) closed convex set, say $C$, then one derives form the well-known fact that the resulting projection  ${\rm Proj}_{C}$ on $C$  is $|\cdot|_{e}$-Lipschitz with Lipschitz coefficient $1$ \textcolor{black}{and} satisfies the following inequality that holds any quantizer $\Gamma\subset \R^d$,
\[
e_r(\Gamma, P, |\cdot|_{e}) \le  e_r(\{{\rm Proj}_{C}(a), \, a\!\in \Gamma\}, P, |\cdot|_{e})
\]
since $|\xi-  {\rm Proj}_{C}(a)|_{e}\le |\xi- a|_{e}$ for every $\xi\!\in C$ and every $a\!\in\R^d$. Hence, one easily checks that  
\begin{align*}
e_{n,r}(P, |\cdot|_{e})^r &= \inf \big\{ e_r(\Gamma, P, |\cdot|_{e})^r : \Gamma\subset \R^d, \; |\Gamma|\le n\big\}\\
&=  \inf \Big\{ \int \min_{b\in \Gamma}|\xi-b|^r P(\mathrm{d}\xi) : \Gamma\subset C, \; |\Gamma|\le n\Big\}.
\end{align*}
Now assume that $U=\mathring{C}\neq \varnothing$ and  that $P(U)=1$.
Another argument based on consequence of the support hyperplane theorem (see among other~\cite{Pages1998},~\cite{GrafL2000} or~\cite{LuPag23})  shows  that an $L^r$-optimal quantizer is then always $U$-valued in that situation.

\subsection{Optimal vector quantization with respect to a Bregman divergence: definitions and first properties} 
  \subsubsection{Bregman divergence} 
  First we introduce the notion of Bregman divergence induced by a strictly convex, continuously (Fr\'echet-)differentiable function $F$ defined on a nonempty convex open set $U$ of $\R^d$.
\begin{definition}[Bregman divergence associated to strictly convex function $F$]\label{def:Bregdivchap2}
Let $F: U \rightarrow \R$ be a continuously differentiable, strictly convex function  defined on a nonempty convex open set $U$ of $\R^d$.
The Bregman divergence induced by $F$ of $\xi\in U$ with respect  to $x\in U$  is defined by
$$
    \phi_{_F}(\xi,x) = F(\xi)-F(x) -\langle \nabla F(x)\,|\, \xi-x\rangle\geq 0,
$$
where denotes $\nabla F(x)$ the gradient vector of $F$ evaluated at $x$. Note that $\phi_{_F}(\xi,x)=0$ if and only if $u=v$ since $F$ is strictly convex.
\end{definition}
When no ambiguity we will often denote $\phi(\xi,x)$ instead of $\phi_{_F}(\xi,x)$.

\medskip 
\noindent {\bf First properties of interest.} $\bullet$ When $F$ is twice (Fr\'echet-)differentiable on $U$ then a second order Taylor expansion with integral remainder yields the alternative formulas that we will extensively use
\begin{equation}\label{eq:bregmadivFC2bis}
   \phi_{_F}(\xi,x)= \int_0^1 (1-u)\nabla^2 F(x+u(\xi-x)).(\xi-x)^{\otimes 2} \mathrm{d}u=  \int_0^1 v\nabla^2 F(\xi+v(x-\xi)).(x-\xi)^{\otimes 2} \mathrm{d}v.
\end{equation}

\noindent $\bullet$ Bregman divergences does not satisfy the triangle inequality nor the symmetric property, which makes it significantly different from a distance.
    For $\xi,x,y\! \in U$, {\em in general}, 
    \begin{itemize}
    
    \smallskip
        \item $\phi_{_F}(\xi,x) \nleq \phi_{_F}(\xi,x) + \phi_{_F}(x,y)$,
        
        \smallskip
        \item $\phi_{_F}(\xi,x) \neq \phi_{_F}(x,\xi)$.
    \end{itemize}
  
  \section{Introduction to quantization with respect to a Bregman divergence}  The idea of Bregman quantization is to replace the norm  which plays the role of a loss function in regular vector quantization theory  by the   Bregman divergence $\phi_{_F}$ of a continuously  differentiable  strictly convex function $F$ as defined  in~\eqref{eq:bregmadivFC2bis}. 
  
  The  function $F$ being defined on an open set $U\subset \R^d$, we will consider in what follows $U$-valued quantizers in our definitions of the quantization  error. This choice, although natural,  may appear somewhat arbitrary when the support of $P$ is strictly included in $U$ and, except in $1$-dimension, it seems unclear that it  has no influence on the  quantization error, e.g. compared to another natural choice like  the convex envelope of the support of the distribution $P$. However this choice makes problem at least in higher dimensions, because the natural domain of definition of convex differentiable function defined on $\R^d$ is an open convex set.    
        
\begin{definition}[Quantization Error w.r.t.   Bregman divergences]\label{def:BregQerror} 
Let $r\!\in (0, +\infty)$.  Let $P$ be a probability distribution supported by $U$ and let $X:(\Omega,{\cal A}, \P)\to \R^d$ be a $P$-distributed random vector.

\noindent $(a)$  Let $\Gamma\subset U$ be  a finite subset of $U$ (also called {\em quantizer}). The $(r, \phi_{_F})$-mean quantization error for Bregman divergences $\phi_{_F}$ of the distribution $P$ is defined by 
\begin{equation}\label{eq:meanBregQerror}
    e_{r,n}(\Gamma, P,\phi_{_F}) =  \left[\int_U \min_{a \in \Gamma} \left (\phi_{_F}(x,a)\right )^{\frac r2} \mathrm{d}P(x) \right]^{\frac{1}{r}}= \left[ \E \, \min_{a\in \Gamma}\phi_{_F}(X,a)^{\frac r2}\right]^{\frac 1r}.
\end{equation}

\noindent $(b)$ The optimal $(r, \phi_{_F})$-mean quantization error  at level $n\!\ge 1$ is defined by
\begin{equation}\label{eq:OptimeanBregQerror}
    e_{r,n}(P,\phi_{_F}) = \inf_{ |\Gamma|\leq n}  e_{r,n}(\Gamma, P,\phi_{_F}).
\end{equation}
\end{definition}

We will use indifferently    $e_{r,n}(\Gamma, X,\phi_{_F})$ and  $e_{r,n}(X,\phi_{_F})$ to denote the above quantities.

\medskip
\noindent {\bf Comment-Warning}. The terminology  {\it $(r,\phi_{_F})$-mean quantization error} has been assigned to this error modulus to fit with the usual terminology when $F(x) = |x|^2$ (Euclidean norm) since then $\phi_{_F}(\xi,x)= |\xi-x|^2$. We are conscious that it  may induce a notational confusion since one has \textcolor{black}{$e_{r,n}(P, |\cdot|) = e_{r,n}(P, \phi_{|\cdot|^2})$ where $e_{r,n}(P, |\cdot|)$ stands for the standard notation in regular quantization based on  (powers) of norm) as similarity measures}.

\begin{proposition}[Integrability]  Let $r>0$. If  $\mathbb{E}\big(|F(X)| \vee |X|\big)^{\frac r2}  < + \infty $, then for every $x\!\in U$, $\mathbb{E}\big(|\phi_{_F}(X,x)|^{\frac r2} )< +\infty$.
\end{proposition}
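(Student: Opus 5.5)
Fix $x\in U$. The argument is a pointwise bound on $\phi_{_F}(X,x)$ followed by an elementary inequality for powers. From the definition,
\[
0\le \phi_{_F}(X,x) = F(X)-F(x)-\langle \nabla F(x)\,|\,X-x\rangle .
\]
Since $x$ is fixed, $F(x)$ and $\nabla F(x)$ are finite constants. The Cauchy--Schwarz inequality and the triangle inequality then give
\[
\phi_{_F}(X,x)\le |F(X)| + |F(x)| + |\nabla F(x)|\,\big(|X|+|x|\big)\le C_x\big(1+|F(X)|\vee|X|\big),
\]
with $C_x:=\max\big(1,\,|F(x)|+|\nabla F(x)|\,|x|,\,1+|\nabla F(x)|\big)<+\infty$. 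The nonnegativity of $\phi_{_F}$ (convexity of $F$) means we may drop the absolute value on the left-hand side.

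Next we raise this to the power $\frac r2>0$. We use the elementary inequality $(a+b)^{p}\le 2^{(p-1)^+}(a^p+b^p)$ for $a,b\ge0$ and $p>0$. Applied with $p=r/2$, it yields
\[
\phi_{_F}(X,x)^{\frac r2}\le C_x^{\frac r2}\,2^{(\frac r2-1)^+}\Big(1+\big(|F(X)|\vee|X|\big)^{\frac r2}\Big).
\]
Taking expectations, the right-hand side is finite by assumption. The constant term is harmless because $P$ is a probability measure. Hence $\E\,\phi_{_F}(X,x)^{\frac r2}<+\infty$.

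There is no real obstacle here. The only points needing a little care are the following. First, the bound must be uniform over the random argument $X$ and depend on $x$ only through constants. Second, the case $r<2$, where $t\mapsto t^{r/2}$ is concave, must be handled; this is why we use the subadditivity form of the power inequality rather than convexity. Third, measurability of $\phi_{_F}(X,x)$ holds because $F$ is continuous on $U$ and $P(U)=1$.
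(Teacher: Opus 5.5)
Your proof is correct and follows the same route as the paper. Both bound $\phi_{_F}(X,x)$ pointwise via the triangle and Cauchy--Schwarz inequalities by $(1+|\nabla F(x)|)\big(|F(X)|\vee|X|\big)+|F(x)|+|\nabla F(x)|\,|x|$ and then integrate. Your use of $(a+b)^p\le 2^{(p-1)^+}(a^p+b^p)$ is in fact more careful than the paper's display, which neither raises the constant terms to the power $\frac r2$ nor includes the factor $2^{(\frac r2-1)^+}$.
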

\noindent {\em Proof.} One has 
\begin{align*}
        \mathbb{E}\,|\phi_{_F}(X,x)|^{\frac r2 }&= \int_U |F(\xi) -F(x) - \langle\nabla F(x)| \xi-x\rangle | ^{\frac r2}P(\mathrm{d}\xi) \\
        & \le(1+|\nabla F(x)| )^{\frac r2} \int_U \big(|F(\xi) | \vee  | \xi | \big)^{\frac r2}P(\mathrm{d} \xi)  \\
    \hskip 5cm     &\quad + |F(x)| +|\langle \nabla F(x) | x\rangle | <+ \infty. \hskip 5cm \Box
    \end{align*}
\begin{proposition}\label{property:2chap2}
    $(a)$ Assume that $F:U \to \R$ has a Lipschitz continuous gradient $\nabla F$ on $U$. Then, for every  $\xi,\,x\!\in U$,
    \begin{equation}\label{eq:ComparEuclide}
        0 \leq \phi_{_F}(\xi,x) \leq \tfrac 12  [\nabla F]_{\rm Lip}|\xi- x|^2.
    \end{equation}
    Also note that under this assumption $F$ and $\nabla F$ can be continuously extended to $\bar U$.
    
    \smallskip
    \noindent $(b)$ If $F$ is $\alpha$-convex for some $\alpha >0$ in the sense $\nabla F $ exists and is $\alpha$-coercive on $U$:
    \[
   \forall\, \xi, x\!\in U, \quad  \langle \nabla F(\xi ) - \nabla F(x) | \xi- x \rangle \ge \alpha |\xi- x|^2,
    \]
    then 
    \[
     \phi_{_F}(\xi,x) \ge  \frac{\alpha}{2}  |\xi- x|^2.
     \]
     \end{proposition}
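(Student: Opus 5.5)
The plan is to write $\phi_{_F}$ as an integral of $\nabla F$ along the segment from $x$ to $\xi$, and then compare the integrand with the linear term $\langle \nabla F(x)\,|\,\xi-x\rangle$. Only first-order information is available, so we cannot use the second-order formula~\eqref{eq:bregmadivFC2bis}. Instead we use the first-order Taylor formula, which holds because $F$ is $C^1$ on the convex open set $U$. Fix $\xi,x\!\in U$, set $h=\xi-x$, and note that $x+uh\!\in U$ for $u\!\in[0,1]$ by convexity. The fundamental theorem of calculus applied to $u\mapsto F(x+uh)$ gives
\[
\phi_{_F}(\xi,x)=\int_0^1 \langle \nabla F(x+uh)-\nabla F(x)\,|\,h\rangle\,\mathrm{d}u .
\]
Nonnegativity $\phi_{_F}\ge 0$ was already noted in the definition; it follows from convexity, or from monotonicity of $\nabla F$ applied to the integrand.

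For $(a)$, we apply Cauchy--Schwarz and the Lipschitz property to the integrand:
\[
\langle \nabla F(x+uh)-\nabla F(x)\,|\,h\rangle\le [\nabla F]_{\rm Lip}\,u|h|^2 .
\]
Integrating against $\mathrm{d}u$ over $[0,1]$ yields the factor $\tfrac12$, which is~\eqref{eq:ComparEuclide}.

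For the extension claim, a Lipschitz map on $U$ is uniformly continuous. It is therefore Cauchy along any sequence converging to a point of $\bar U$, and so extends uniquely and continuously (still Lipschitz) to $\bar U$. For $F$ itself, $\nabla F$ is bounded on bounded subsets of $U$ by the Lipschitz bound: $|\nabla F(y)|\le |\nabla F(x_0)|+[\nabla F]_{\rm Lip}|y-x_0|$. Hence $F$ is Lipschitz on bounded convex subsets of $U$, via the mean value inequality along segments. So $F$ is uniformly continuous on each $U\cap B(0,R)$ and extends continuously to $\bar U$.

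The one point needing care is that $U\cap B(0,R)$ is convex, so segments stay inside it. Also, every point of $\partial U$ is approached from inside such a bounded set, so the local extensions agree and glue into a global one. This bookkeeping is the only mild obstacle.

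For $(b)$, we apply the coercivity assumption to the pair $(x+uh,x)$ for $u\in(0,1]$:
\[
\langle \nabla F(x+uh)-\nabla F(x)\,|\,uh\rangle\ge \alpha u^2|h|^2 .
\]
Dividing by $u>0$ gives $\langle \nabla F(x+uh)-\nabla F(x)\,|\,h\rangle\ge\alpha u|h|^2$. Integrating over $u\in[0,1]$ then gives $\phi_{_F}(\xi,x)\ge\frac{\alpha}{2}|\xi-x|^2$.
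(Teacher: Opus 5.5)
Your proof is correct and follows essentially the paper's route: for $(a)$ the paper uses the same first-order integral representation of $\phi_{_F}$, Cauchy--Schwarz and the Lipschitz bound. The paper dismisses the extension to $\bar U$ as a consequence of Cauchy's criterion; you spell out that argument in more detail. For $(b)$ the paper shows that $g(t)=F(t\xi+(1-t)x)-F(x)-t\langle \nabla F(x)\,|\,\xi-x\rangle-\frac{\alpha}{2}t^2|\xi-x|^2$ satisfies $g'\ge 0$, which is exactly your pointwise coercivity bound before integration, so the two arguments coincide.
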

\noindent {\em Proof.} $(a)$ Let $\xi, x\!\in U$. One has using Cauchy-Schwartz inequality in the second line, that 
    \begin{align*}
        \phi_{_F}(\xi,x) &= F(\xi) - F(x) -\langle \nabla F(x) | \xi-x \rangle \\
       & =\int_0^1  \langle \nabla F((1-u)\xi +u x) -\nabla F(x)\,|\, \xi-x\rangle du\\
    & \le \int_0^1  \big|\nabla F((1-u)\xi +u x) -\nabla F(x)\big|| \xi-x| du\\
    & \le  \int_0^1  (1-u)du [\nabla F]_{\rm Lip} | \xi-x|^2  =  \tfrac 12 [\nabla F]_{\rm Lip}|\xi-x |^2.
    \end{align*}
    
    The second claim is a straightforward consequence of  Cauchy's criterion.
    
    \noindent $(b)$ is an easy consequence of the fact that the function
    \[
    g(t)  = F(t\xi+(1-t)x)-F(x) -t\langle \nabla F(x)\,|\, \xi-x\rangle -\frac{\alpha}{2}t^2|\xi-x|^2
    \]
     has a nonnegative derivative on $(0,1]$ since 
     \begin{align*}
     g'(t) &= \langle \nabla F(t\xi+(1-t)x)-\nabla F(x)\,|\,\xi-x\rangle -\alpha t|\xi-x|^2\\
     & =\frac 1t \Big(\big\langle \nabla F(t\xi+(1-t)x)-\nabla F(x)\,|\,t(\xi-x)\big\rangle -\alpha \big|t(\xi-x)\big|^2\Big)\ge 0.
     \end{align*}
  This  implies  $g(1)\ge g(0)$. 
     \hfill $\Box$

\subsection{Existence of optimal quantizers (background)}\label{subsec:RecallExist}
We recall here the existence theorems for the  existence of optimal quantizers $(r,\phi_{_F})$-Bregman divergence  established in~\cite{BoutPag1} (although in the proofs that follow we will always manage not to use these existence results).

\medskip 
\noindent {\bf Preliminary remark} ({\em Shrinking may help}).The nonempty open set $U$ is defined as the definition domain of the function $F$ in the above definition~\ref{def:Bregdivchap2} and the distribution $P$ is supposed to satisfy $P(U)=1$. However in many applications, some assumptions below such as ~\eqref{eq:boundary-lsc-bis-chap2} or~\eqref{eq:Cond-ra-bis-chap2}--\eqref{eq:Cond-rb-bis} are both satisfied owing to the behaviour of the Bregman divergence  outside $U\times U$.  In fact it is clear that the conclusion of the theorems below still hold if there exists an open convex set $V\subset U$ such that $P(V)=1$ for which $F_{|V}$ and $\phi_{F_{|V}}= (\Phi_{_F})_{V\times V}$ satisfy these assumptions. With, as a by-product, that the optimal $n$-quantizers in $x^{(n)}$ will be $V^n$-valued since $V$ plays the  role of $U$ in  the theorems below. Thus, Condition~\eqref{eq:boundary-lsc-bis-chap2}}  may be easier to satisfy e.g. if $V$ is bounded while $U$ is not.

We introduce for convenience the $(r, \phi_{F})$-distortion function $G_{r,n}:U^n\to \R_+$ as $r$-th power of the $(r, \phi_{F})$-mean quantization error i.e.
\begin{align*}
\forall\, x=(x_1,\ldots,x_n)\!\in U^n, \;G_{r,n}(x_1,\ldots,x_n)&= e_r\big(\{x_1, \ldots,x_n\},P\phi_{_F}\big)^r\\
= \E\, \min\phi_{_F}(X,x_i)^r&= \int_U \min\phi_{_F}(\xi,x_i)^rP({\rm d}\xi).
\end{align*}
Note that one clearly has
\[
\inf_{x\!\in U^n}G_{r,n}(x_1,\ldots,x_n)=\Big( \inf_{\Gamma\subset U, \, |\Gamma|\le n}e_r\big(\{x_1, \ldots,x_n\},P,\phi_{_F}\big)\Big)^r.
\]
since any grid $\Gamma$ with at most $n$ element, one may build a $n$-tuple $x$ such that $\Gamma=\{x_1, \ldots,x_n\}$ by repeating some components.
 
\begin{theorem} [Existence when $r=2$] \label{thm:Existence-bis}   Assume that the distribution $P$ satisfies  $\mathbb{E}\big(|F(X)| \vee |X|\big) < + \infty $  and,  
if $U$ is unbounded and $d\ge 2$, that the $F$-Bregman divergence $\phi_{_F}$ satisfies
\begin{equation}\label{eq:boundary-lsc-bis-chap2}
\forall \xi \!\in U
, \quad \liminf_{|x|\to +\infty,\, x\in U}\phi_{_F}(\xi, x)= \sup _{x\in U} \phi_{_F}(\xi,x).
\end{equation} 


\medskip
\noindent $(a)$ Then for every $n\ge1$ there exists an $n$-tuple $x^{(n)}= \big(x^{(n)}_1, \ldots, x^{(n)}_n\big) \!\in U^n$ which minimizes $G_{n}$ over $U^n$. Moreover, if  the support (in $U$) of the distribution $P$ has at least $n$ points then $x^{(n)}$ has pairwise distinct components and $P\big(\mathring C_i(x^{(n)})\big)>0$ for every $i\!\in \{1,\ldots,n\}$.

\smallskip
\noindent $(b)$ The distribution  $P$ assigns no mass to the   boundary of Bregman-Voronoi partitions of $x^{(n)}$ i.e.
\[
P\left (\bigcup_{i=1}^n\partial C_i\big( x^{(n)})\right) = 0
\]
and $x^{(n)}$ satisfies the stationary (or master) equation
\begin{equation}\label{eq:Mastereqbis}
P\big( C_i(x^{(n)})\big)\,x^{(n)}_i  -  \int_{ C_i(x^{(n)})} \xi P(\mathrm{d}\xi) =0, \quad =1,\ldots,n.
\end{equation}
 
\noindent $(c)$ The sequence $\displaystyle G_{n}\big( x^{(n)}\big) = \min_{U^n} G_{n}$ decreases as long as it is not $0$ and converges to $0$ as $n$ goes to $+\infty$.

\noindent $(d)$ When $d=1$, the above claims $(a)$-$(b)$-$(c)$ remain true without assuming~\eqref{eq:boundary-lsc-bis-chap2}.
\end{theorem}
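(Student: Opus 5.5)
I will follow the standard compactness argument for existence of optimal quantizers, by induction on $n$, with the Bregman structure replacing the norm where needed. For $r=2$ the distortion is $G_n(x)=\int_U\min_i\phi_{_F}(\xi,x_i)P(\mathrm{d}\xi)$. It is finite on $U^n$ by the integrability proposition, since $\E(|F(X)|\vee|X|)<+\infty$. It is lower semicontinuous on $U^n$ by Fatou's lemma, because $x\mapsto\phi_{_F}(\xi,x)$ is continuous for $F\in C^1$.

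\textbf{Extension to the closure.} I first extend $\phi_{_F}(\xi,\cdot)$ to $\bar U\cup\{\infty\}$ by its lower limit, $\bar\phi(\xi,x)=\liminf_{y\to x,\,y\in U}\phi_{_F}(\xi,y)$. This is possibly $+\infty$ at finite boundary points, and at $\infty$ it equals $\sup_U\phi_{_F}(\xi,\cdot)$ by~\eqref{eq:boundary-lsc-bis-chap2}. The extended distortion $\bar G_n$ on the compact set $(\bar U\cup\{\infty\})^n$ is l.s.c. by Fatou, so it attains its minimum at some $x^*$.

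\textbf{Pushing the minimizer into $U^n$.} I must show $x^*$ may be taken in $U^n$, and this is the main obstacle. Suppose some component $x^*_{i}\notin U$. For a component sent to infinity, \eqref{eq:boundary-lsc-bis-chap2} says that component is never strictly better than any point of $U$, so it contributes nothing. Such a component can be replaced by a point of $U$, or dropped, which reduces to level $n-1$. By induction, the level-$(n-1)$ minimum is attained in $U$, and adding a point at which $P$ has positive mass away from the existing ones strictly lowers the cost. For a finite boundary point, I would argue that $\phi_{_F}(\xi,y)\ge\phi_{_F}(\xi,\pi(y))$ for a suitable interior point $\pi(y)$ along the segment towards $\xi$. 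Convexity in the first variable does not give this directly. Instead I would use the three-point identity $\phi(\xi,y)=\phi(\xi,z)+\phi(z,y)+\langle\nabla F(z)-\nabla F(y)\,|\,\xi-z\rangle$, with $z$ chosen in the supporting-hyperplane direction, as in the Euclidean remark earlier in the paper. When $d=1$ a monotonicity argument suffices: $\phi_{_F}(\xi,\cdot)$ is decreasing on the left of $\xi$ and increasing on its right. This is why~\eqref{eq:boundary-lsc-bis-chap2} is unnecessary in claim $(d)$. If the support of $P$ has at least $n$ points, the same "add a point" argument shows the optimal components are distinct and every cell has positive mass.

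\textbf{Stationarity, boundaries and the decreasing sequence.} For $(b)$, the stationary equation comes from the Bregman centroid property: for any set $C$ with $P(C)>0$, the map $x\mapsto\int_C\phi_{_F}(\xi,x)P(\mathrm{d}\xi)$ is uniquely minimized at the barycenter. This follows from $\int_C\phi(\xi,x)\,\mathrm{d}P=\int_C\phi(\xi,m)\,\mathrm{d}P+P(C)\,\phi(m,x)$ (Banerjee et al.), with $m$ the conditional mean, which lies in $U$ by convexity. Hence each optimal $x_i$ is the mean of its cell; one also checks this is compatible with ties. The boundaries of the Bregman--Voronoi cells are subsets of sets $\{\phi(\xi,x_i)=\phi(\xi,x_j)\}$, and these are hyperplanes in $\xi$ because the $F(\xi)$ terms cancel. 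If such a boundary carried mass, moving that mass to one of the two cells would leave the cost unchanged but break the barycenter identity for the two cells. That gives a strict improvement, contradicting optimality. Finally, for $(c)$, monotonicity follows from the add-a-point argument. Convergence to $0$ follows from dominated convergence along a dense sequence of points in $U$, dominated by $\phi(\cdot,x_1)\in L^1(P)$.
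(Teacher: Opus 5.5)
The paper does not prove this statement (it is only recalled from \cite{BoutPag1}), so your argument has to stand on its own. Your handling of $(b)$, of $(c)$, and of components escaping to $\infty$ under \eqref{eq:boundary-lsc-bis-chap2} is fine. The step ``pushing the minimizer into $U^n$'' at finite boundary points, however, has a genuine gap, and the underlying strategy cannot work. You minimize $\bar G_n(x)=\int\min_i\bar\phi(\xi,x_i)\,P(\mathrm{d}\xi)$, built from the \emph{pointwise} lower limit $\bar\phi(\xi,x)=\liminf_{y\to x}\phi_{_F}(\xi,y)$. That liminf may be realized along a different approach direction for each $\xi$. A boundary point can then act like a whole family of codepoints, and one can have $\min\bar G_n<\inf_{U^n}G_n$.

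For instance, take $d=2$, $U=\{\xi_2>0\}$ and $F(\xi)=|\xi|+\ve|\xi|^2$, so that $\phi_{_F}(\xi,y)=|\xi|-\langle y\,|\,\xi\rangle/|y|+\ve|\xi-y|^2$. Condition~\eqref{eq:boundary-lsc-bis-chap2} holds, since both sides are $+\infty$. But letting $y\to 0$ along the direction of $\xi$ gives $\bar\phi(\xi,0)=\ve|\xi|^2$. For $P$ uniform on $\{1\le|\xi|\le 2,\ \xi_2>0\}$ this yields $\bar G_1(0)\le 4\ve$. On the other hand, for every $m\in U$, $G_1(m)\ge\int\big(|\xi|-\langle m\,|\,\xi\rangle/|m|\big)P(\mathrm{d}\xi)\ge c>0$, with $c$ independent of $\ve$. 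So for small $\ve$ the minimizer of $\bar G_1$ sits at $0\in\partial U$, and no interior point does as well. The inequality $\phi_{_F}(\xi,y)\ge\phi_{_F}(\xi,\pi(y))$ you hope for is therefore false. Two further problems: your $\pi(y)$ ``along the segment towards $\xi$'' depends on $\xi$, and $\nabla F(y)$, which the three-point identity needs, need not exist at $y\in\partial U$.

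The same issue undermines $(d)$. Pointwise monotonicity of $\phi_{_F}(\xi,\cdot)$ only compares a boundary point with interior points lying between it and $\xi$, and no single interior point works for a whole cell. For example, with $F(x)=x^2$ on $(0,+\infty)$ and a codepoint at $0$ serving $(0,t]$, any $y_0>0$ is worse than $0$ for $\xi<y_0/2$.

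The missing idea is to use the centroid identity you already invoke in $(b)$, but along a minimizing sequence $x^{(k)}\in U^n$ rather than on $\bar G_n$. Write $\phi_{_F}(\xi,x_i^{(k)})=F(\xi)-T_i^{(k)}(\xi)$, where $T_i^{(k)}$ is the tangent affine map of $F$ at $x_i^{(k)}$, so that $T_i^{(k)}\le F$ on $U$.

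If $\nabla F(x_i^{(k)})$ is unbounded, then along a subsequence $T_i^{(k)}\to-\infty$ at every point of $U$. To see this, compare $T_i^{(k)}(\xi)$ with $T_i^{(k)}(\xi+\delta u)\le F(\xi+\delta u)$, where $u$ is a limit direction of the gradients. The same conclusion holds if the gradients stay bounded but $T_i^{(k)}(\xi_0)\to-\infty$. Such components are useless and can be discarded. Otherwise, after extraction, $T_i^{(k)}\to\ell_i$ with $\ell_i$ affine.

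Fatou then gives $\inf_{U^n}G_n\ge\int\min_i(F-\ell_i)\,\mathrm{d}P$, the minimum running over the remaining indices. On each limiting cell $C_i$ with $P(C_i)>0$ one has $\int_{C_i}(F-\ell_i)\,\mathrm{d}P=\int_{C_i}\phi_{_F}(\cdot,m_i)\,\mathrm{d}P+P(C_i)\,(F-\ell_i)(m_i)$. Here $m_i\in U$ is the conditional mean and $(F-\ell_i)(m_i)\ge 0$. Replacing the limits by the $m_i$ produces a minimizer in $U^n$. This treats finite boundary points and $\infty$ in the same way, including in dimension $1$.

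With existence obtained this way, the rest of your proposal goes through: the add-a-point argument, the hyperplane structure of cell boundaries, the two tie-breaking rules for $P(\partial C_i)=0$, and the dense-sequence argument for $(c)$.
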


In the  theorem below $\bar U^{\widehat{\R^d}}$ denotes the closure of $U$ in the Alexandroff compactification of $\R^d$.

\begin{theorem}[Existence when $r>2$] \label{thm:Existence-r-bis}   
Let $r\!\in (2, +\infty)$. Assume the distribution $P$ of $X$ satisfies the  moment assumption $\mathbb{E}\,\big(|X| \vee |F(X)| \big)^{\frac  r2} <+\infty$
and $F$ is $C^2$ on $U$ with $\nabla^2F(x)$ (symmetric) positive definite for  every $x\!\in U$.
%
%
%
%
Assume that at every $\bar x \!\in \hat\partial  \bar U^{\widehat{\R^d}}$,\\ either
\begin{equation} \label{eq:Cond-ra-bis-chap2}
(i)\;\nabla^2F \mbox{ can  be continuously extended at } \bar x \mbox{ and } \nabla^2F(\bar x) \mbox{ is (symmetric) positive definite}
\end{equation}
or  

$(ii)$\; the l.s.c. extensions $\phi_{_F}(\xi, \cdot)$  on $ \bar U^{\widehat{\R^d}}$ satisfy  
\begin{equation}\label{eq:Cond-rb-bis}
 \forall\, \xi \!\in U, \qquad \phi_{_F}(\xi, \bar x) =\displaystyle  \sup_{x\in U}\phi_{_F}(\xi,x)
\end{equation}
where, if $U$ is unbounded, $\bar x = \infty$ satisfies the above condition~$(ii)$.

\medskip
\noindent $(a)$ Then for every $n\ge1$ there exists an $n$-tuple $x^{(r,n)}= \big(x^{(r,n)}_1, \ldots, x^{(r,n)}_n\big) \!\in U^n$ which minimizes $G_{n}$ over $U^n$. Moreover, if  the support (in $U$) of the distribution $P$ has at least $n$ points then $x^{(r,n)}$ has pairwise distinct components and $P\big(\mathring C_i(x^{(r,n)})\big)>0$ for every $i\!\in \{1,\ldots,n\}$.

\smallskip
\noindent $(b)$ The distribution  $P$ assigns no mass to the   boundary of Bregman-Voronoi partitions of $x^{(n)}$ i.e.
\[
P\left (\bigcup_{i=1}^n\partial C_i\big( x^{(r,n)})\right) = 0
\]
and $x^{(n)}$ satisfies the $(r,\phi_{_F})$-stationary (or $(r,\phi_{_F})$-master) equation
\begin{equation}\label{eq:rMastereqbis}
 x^{(r,n)}_i = \frac{ \int_{C_i(x^{(r,n)})}\xi \phi_{_F}( \xi,x^{(r,n)}_i )^{\frac r2-1}P(\mathrm{d}\xi)}{\int_{C_i(x^{(r,n)})}\phi_{_F}( \xi,x^{(r,n)}_i )^{\frac r2-1}P(\mathrm{d}\xi) }  , \quad =1,\ldots,n.
\end{equation}
 
\noindent $(c)$ The sequence $\displaystyle G_{r,n}\big( x^{(r,n)}\big) = \min_{U^n} G_{r,n}$ decreases as long as it is not $0$ and converges to $0$ as $n$ goes to infinity.
\end{theorem}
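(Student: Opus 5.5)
The plan is to adapt the classical existence scheme for $L^r$-optimal quantizers. It has four steps: induction on $n$, lower semicontinuity of $G_{r,n}$, compactness in $\bar U^{\widehat{\R^d}}$, and elimination of components escaping to the boundary. The distortion to minimize is $G_{r,n}(x)=\int_U\min_i\phi_{_F}(\xi,x_i)^{\frac r2}P(\mathrm{d}\xi)$. By the integrability proposition and the moment assumption, $G_{r,n}$ is finite on $U^n$. Since each $\phi_{_F}(\xi,\cdot)$ is continuous on $U$ (as $F\in C^1$), Fatou's lemma shows that $G_{r,n}$, extended through the l.s.c. extensions of $\phi_{_F}(\xi,\cdot)$ to $\bar U^{\widehat{\R^d}}$, is lower semicontinuous on the compact set $\big(\bar U^{\widehat{\R^d}}\big)^n$. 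Hence it attains its minimum at some $\bar x=(\bar x_1,\dots,\bar x_n)$. It remains to show that this minimum can be taken with all components in $U$.

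We argue by induction on $n$. Suppose some component $\bar x_i$ lies in $\hat\partial\bar U^{\widehat{\R^d}}$.

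\emph{Case $(ii)$ at $\bar x_i$.} Then $\phi_{_F}(\xi,\bar x_i)=\sup_U\phi_{_F}(\xi,\cdot)$, so this component never does strictly better than any interior point. It can therefore be replaced by one of the other components, or, if $n=1$, by an arbitrary point of $U$, without increasing $G_{r,n}$. The induction hypothesis, namely the existence of a $U$-valued minimizer at level $n-1$, then yields a $U$-valued minimizer.

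\emph{Case $(i)$ at $\bar x_i$.} Here $\bar x_i\in\partial U$ is finite and $H:=\nabla^2F(\bar x_i)$ is positive definite. Take $\nu$ an inward normal of a supporting hyperplane of the convex set $U$ at $\bar x_i$, so that $\langle\nu,\xi-\bar x_i\rangle>0$ for every $\xi\in U$. Set $v=H^{-1}\nu$. Since $\nabla_x\phi_{_F}(\xi,x)=-\nabla^2F(x)(\xi-x)$, we get
\[
\tfrac{\mathrm d}{\mathrm dt}\phi_{_F}(\xi,\bar x_i+tv)\big|_{t=0^+}=-\langle\nu,\xi-\bar x_i\rangle<0 .
\]
By continuity of the extended Hessian near $\bar x_i$, moving $\bar x_i$ slightly along $v$, into $U$, strictly decreases $\phi_{_F}(\xi,\cdot)$ for every $\xi$ in its cell. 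This strictly decreases $G_{r,n}$ if the cell has positive mass, which contradicts minimality. If the cell is $P$-null, we reduce to level $n-1$ as in case $(ii)$. Making this derivative argument uniform in $\xi$ near $\bar x_i$ is where I expect the main technical obstacle. I would handle it by restricting to the $\xi$ in a compact subset of $U$ that carries most of the cell's mass and controlling the remainder with the $(r/2)$-moment.

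For the rest of $(a)$, suppose $\mathrm{supp}(P)$ has at least $n$ points and an optimal $x^{(r,n)}$ had fewer than $n$ distinct components, or a cell with empty-interior mass. Adding a support point not in the grid strictly decreases the error on a neighbourhood of positive mass. So $\min G_{r,n}<\min G_{r,n-1}$, which forces $n$ distinct components each with $P(\mathring C_i)>0$.

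For $(b)$, the key observation is that $\phi_{_F}(\xi,x_i)=\phi_{_F}(\xi,x_j)$ reads
\[
\langle\nabla F(x_j)-\nabla F(x_i),\xi\rangle=\text{const}.
\]
This is affine in $\xi$, and it is nondegenerate by strict convexity and injectivity of $\nabla F$. Thus Bregman--Voronoi boundaries lie in finitely many hyperplanes. If $P$ charged such a hyperplane between $x_i$ and $x_j$, compare the one-sided directional derivatives of $G_{r,n}$ at $x^{(r,n)}$ obtained by assigning the tie mass to $x_i$ or to $x_j$. These derivatives are computed by dominated convergence using $r>2$, $\nabla^2F$ continuous and the moment bound. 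Moving $x_i$ toward the tie set, along the vector $v$ constructed as above from $\nabla^2F(x_i)$, then gives a strictly negative one-sided derivative, which contradicts optimality. Once the boundaries are null, $G_{r,n}$ is differentiable at $x^{(r,n)}$ with
\[
\partial_{x_i}G_{r,n}=-\tfrac r2\,\nabla^2F(x_i)\int_{C_i}\phi_{_F}(\xi,x_i)^{\frac r2-1}(\xi-x_i)\,P(\mathrm d\xi).
\]
Setting this to zero and inverting $\nabla^2F(x_i)$ gives~\eqref{eq:rMastereqbis}.

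For $(c)$, the monotonicity is the strict decrease already proved. For convergence to $0$, take a countable dense subset $\{a_k\}$ of $\mathrm{supp}(P)\cap U$. Then $\min_{k\le n}\phi_{_F}(\xi,a_k)^{\frac r2}\to0$ for $P$-a.e.\ $\xi$, and it is dominated by $\phi_{_F}(\xi,a_1)^{\frac r2}\in L^1(P)$. Dominated convergence then gives $\min G_{r,n}\le G_{r,n}(a_1,\dots,a_n)\to0$.
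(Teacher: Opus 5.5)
The paper does not prove this theorem; it only recalls it from \cite{BoutPag1}. I can therefore only assess your argument on its own terms. Most of it holds up:
\begin{itemize}
\item the l.s.c.\ extension of $G_{r,n}$ to $\big(\bar U^{\widehat{\R^d}}\big)^n$ via Fatou;
\item the induction that removes components in case $(ii)$;
\item the strict decrease $\min G_{r,n}<\min G_{r,n-1}$, which gives distinct components with $P(\mathring C_i)>0$;
\item the observation that ties lie in hyperplanes. With $\nu=\nabla F(x_j)-\nabla F(x_i)$ one has $\langle \nu,\xi-x_i\rangle=\phi_{_F}(x_i,x_j)>0$ on the tie set, and this is what makes your two-assignment comparison in $(b)$ work;
\item the dominated convergence argument for $(c)$.
\end{itemize}

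\textbf{The gap is in case $(i)$.} The direction $v=H^{-1}\nu$, with $\nu$ an \emph{arbitrary} inward normal at $\bar x_i$, need not point into $U$ when $\bar x_i$ is a non-smooth boundary point. In that case $\phi_{_F}(\xi,\bar x_i+tv)$ is not even defined.
\begin{itemize}
\item Concrete failure: take $U=(0,+\infty)^2$ and $F(x)=\tfrac12(x_1^2+2a x_1x_2+x_2^2)$ with $0<a<1$. All hypotheses hold: $(i)$ at every finite boundary point and $(ii)$ at $\infty$. Take $\bar x_i=0$ and $\nu=e_1$, which satisfies $\langle\nu,\xi\rangle>0$ on $U$. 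Then $v=H^{-1}e_1$ is proportional to $(1,-a)$, so $tv\notin\bar U$ for every $t>0$.
\item Second problem: even when $v$ is admissible, the claim that moving $\bar x_i$ ``strictly decreases $\phi_{_F}(\xi,\cdot)$ for every $\xi$ in its cell'' is false at any fixed $t>0$. For $\xi$ within distance $O(t)$ of $\bar x_i$ the quadratic term dominates.
\end{itemize}

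\textbf{A fix that removes both problems.} Argue on the integral, with a direction adapted to the cell.
\begin{itemize}
\item Let $A$ be the cell of $\bar x_i$, with $P(A)>0$.
\item Set $\omega(\xi)=\phi_{_F}(\xi,\bar x_i)^{\frac r2-1}$. It is positive on $U$, because the segment $[\xi,\bar x_i)$ lies in $U$, where $\nabla^2F$ is positive definite.
\item Set $\bar\xi=\int_A\xi\,\omega\,\mathrm{d}P\big/\int_A\omega\,\mathrm{d}P$. This is the barycenter of a probability measure carried by the open convex set $U$, so $\bar\xi\in U$.
\item Hence $v:=\bar\xi-\bar x_i\neq0$ and $\bar x_i+tv\in U$ for $t\in(0,1]$.
\item Condition $(i)$ makes $F$, $\nabla F$ and $\nabla^2F$ bounded near $\bar x_i$. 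So for small $t$, $|\partial_t\phi_{_F}(\xi,\bar x_i+tv)^{r/2}|\le C\big(1+|\xi|\vee|F(\xi)|\big)^{r/2}$, which is integrable by the moment assumption.
\item Dominated convergence then gives the right derivative of $t\mapsto\int_A\phi_{_F}(\xi,\bar x_i+tv)^{r/2}P(\mathrm{d}\xi)$ at $0^+$ as $-\tfrac r2\big(\int_A\omega\,\mathrm{d}P\big)\langle Hv,v\rangle<0$.
\item Keeping the assignment fixed, this contradicts the minimality of the extended functional.
\end{itemize}
A suitable $\nu$ for your original construction does always exist. However, the barycentric choice is simpler and also settles the uniformity issue you flagged.

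\textbf{Minor point in $(b)$.} The denominator of the master equation vanishes when $P$ restricted to $C_i$ is an atom at $x_i$. What your argument actually yields, and what should be stated, is $\int_{C_i}\phi_{_F}(\xi,x_i)^{\frac r2-1}(\xi-x_i)\,P(\mathrm{d}\xi)=0$.
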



\noindent {\bf Remarks.} $\bullet$ When $n=1$, i.e. the  case of the Bregman {\em $(r,\phi_{_F})$-median}, the assumptions on $F$ and $P$ can be slightly relaxed (see~\cite{BoutPag1}).

\smallskip
\noindent $\bullet$  Note that the assumptions on $F$ are more stringent when $r>2$ compared to the case $r=2$.

\bigskip
\noindent {\bf Examples of Bregman divergences in one dimension.} 
These examples are all  $1$-dimensional so that Theorem~\ref{thm:Existence-r-bis} applies without further  conditions on the functions $F$.
\begin{enumerate} 
\item {\em Regular quadratic loss function}. $F(x)= x^2$, $U= \R$,  $\phi_{_F}(\xi,x) = (\xi-x)^2$ 

\item {\em Norm--like}. $F(x)= x^a$, $a>1$, $U= (0, +\infty)$, $\phi_{_F}(\xi,x)= \xi^a +(a-1)x^a- a \,\xi \,x^{a-1}$,
\item {\em Itakura--Saito divergence.} $F(x)= -\log (x)$, $U= (0, +\infty)$, $\phi_{_F}(\xi,x) = \log \big(\frac x\xi\big)+\frac \xi x-1$.

\item {\em I-divergence or Kullback-Leibler divergence}.  $F(x) = x\log(x)$, $U= (0, +\infty)$, $\phi_{_F}(\xi,x)= \xi\Big( \log\big(\frac \xi x\big)-1+\frac x\xi \Big)$.

\item {\em Logistic loss}. $F(x)= x\log x +(1-x)\log(1-x)$,  $U= (0, 1)$, $\phi_{_F}(\xi,x)=\xi \log\frac \xi x +(1-\xi)\log\Big(\frac{1-\xi}{1-v}\Big)$.

\item {\em Softplus loss}. $F(x)= \log(1+e^{x})$, $U= \R$, $\phi_{_F}(\xi,x) = \log\Big(  \frac{1+e^\xi}{1+ e^x}\Big) - \frac{e^x}{e^x +1}(\xi-x)$.

\item {\em Exponential loss}. $F_\rho(x)= e^{\rho x}$,   $\rho\!\in \R$, $U= \R$, $\phi_{F_\rho}(\xi,x) = \phi_{F_1}(\rho \xi,\rho x)$ with \linebreak $ \phi_{F_1}(\xi,x) = e^\xi -e^x-e^v(\xi - x)$. 
 \end{enumerate} 

\medskip 
\noindent {\bf Remark.} Note that  the function $F$ in 6. and 7.  does not fulfill the assumption contained e.g. in~\cite{Fischer2010} to guarantee existence of optimal quantizers for such Bregman divergences, that is 
\begin{equation}\label{eq:Cond2b-chap2}
\forall\,  \bar x \!\in \partial \bar U^{\widehat{\R^d}},\; \forall\,  \xi \!\in U, \quad  \liminf_{x\to \bar x, \,x\in U} \phi(\xi,   x) =  \sup_{x\in U} \phi(\xi,x).
\end{equation}

 \bigskip
\noindent {\bf Examples of Bregman divergence in higher dimension.} 
 \begin{enumerate} 
 \item {\em Regular quadratic loss function}. $F(x)= |x|^2$, $U= \R^d$.
 \item {\em Mahalanobis distance}. $F(x)= x^*Sx$, $S\!\in {\cal S}^+(d,\R)\cap GL(d,\R)$ (symmetric and positive definite), $U= \R^d$ and  $\phi_{_F}(\xi,x)= (\xi-x)^*S(\xi-x):=|\xi-x|^2_{_S}$. (Note that $F$ is simply the square of an Euclidean norm so that this case is already contained in classical optimal quantization theory with $U= \R^d$.)
\item {\em $f$-marginal divergence (additive)}. $F(x_1,\ldots,x_d)= \sum_{i=1}^d f(x_i)$, $f$ strictly convex on $U$, is defined on $U^n$, $\phi_{_F}(\xi,x)=  \sum_{i=1}^d \phi_{_F}(\xi_i,x_i)$.
  \item {\em $f$-marginal divergence (multiplicative)}. $F(x_1,\ldots,x_d)= \prod_{i=1}^d f(x_i)$, $f$ strictly convex on $U$, is defined on $U^n$, $\phi_{_F}(\xi,x)=  \sum_{i=1}^d \phi_{_F}(\xi_i,x_i)\prod_{j\neq i}\phi_{_F}(\xi_i,x_j)$.
  \item {\em Soft max marginal $f$-divergence}.  $F(x_1,\ldots,x_d)=F_{\lambda}(x_1,\ldots,x_d)= \frac{1}{\lambda}\log\Big(\sum_{i=1}^d e^{\lambda f(x_i)}\Big)$, $f$ strictly convex on $U$, is defined on $U^d$ and for every $x=(x_1,\ldots,x_d)$, $\xi=(\xi_1, \ldots,\xi_d)\!\in U^d$,
  $$
  \phi(\xi,x)= F_{\lambda}(\xi)-F_{\lambda}(x)-  \lambda \frac{\sum_{1\le i\le d} f'(x_i)e^{\lambda f(x_i)}(\xi_i-x_i)}{\sum_{1\le i\le d} e^{\lambda f(x_i)}}.
  $$ 
  \end{enumerate}  


\bigskip
\noindent {\bf Warning (bis) !} There is a conflict of notation between the  regular $(r,\phi_{_F})$-mean quantization errors associated to the  Euclidean norm, namely  $e_r(\Gamma, P, |\,\cdot\,|)$ and  $e_{r,n}(P, |\,\cdot\,|)$ in regular optimal quantization theory and those associated to the Bregman divergence induced by this Euclidean norm  since $\phi_{_F} = |\,\cdot\,|^2$ when $F=|\,\cdot\,|^2$. In what follows we adopt the second notation  i.e.
\[
e_r(\Gamma, P,|\,\cdot\,|^2)= \left(\int_{\R^d}\min_{b\in \Gamma}|\xi-b|^rP(\mathrm{d}\xi)\right)^{\frac 1r}
\]
following Definition~\ref{def:BregQerror}. 

\nopagebreak\section{Asymptotic analysis of the quantization error: a Zador like theorem}\label{sec:ZadorBreg}
Proving  rigorously a Zador like theorem in the framework of Bregman divergence    will require several steps but, prior to this long way to the target, let us  note that combining the above classical Zador's  theorem and \textcolor{black}{Proposition~\ref{property:2chap2}$(a)$} that if $F$ is Lipschitz (and even simply convex) then, for any quantizer $\Gamma\subset U$,  
\[
e_{r}(\Gamma, P,F) \le \sqrt{\frac{[\nabla F]_{\rm Lip}}{2}} \,e_{r}(\Gamma, P, |\cdot|^2)
\]
where $|\cdot| $ denotes the canonical Euclidean norm. This  implies that, under the assumptions of the above Zador theorem (applied with the  Euclidean norm), one has
\[
 \limsup_{n\rightarrow+\infty} n^{1/d}e_{r,n}(P, F) \le \sqrt{\frac{[\nabla F]_{\rm Lip}}{2}} Q_r([0,1]^d)\big \| h\big \|_{L^{\frac{d}{d+r}}(\lambda_d)}.
 \]

Similarly if $F$ is $\alpha$-convex, Property~\ref{property:2chap2}$(b)$  implies 
\[
e_{r,n}(\Gamma, P,F) \ge \sqrt{\frac{\alpha}{2}} e_{r,n}(\Gamma, P, |\cdot|^2)
\]
so that
\[
 \liminf_{n\rightarrow+\infty} n^{r/d}e_{n,r}(P,\|.\|)^r  \ge \sqrt{\frac{\alpha}{2}}  Q_r([0,1]^d)\big \| h\big \|_{L^{\frac{d}{d+r}}(\lambda_d)}.
\]

\subsection{Zador's like Theorem for Bregman divergence}

Still few notational preliminaries needed to  state the theorem. First, we introduce    for every $\ve>0$ its (open) ``$\ve$-interior'' defined by 
\[
U_{\ve} = \big\{x\!\in U: d(x,U^c)>\ve\big\}
\] 
(the distance $d$ is taken w.r.t. to the canonical Euclidean norm). These {\em $\ve$-interiors}   of $U$ are non empty for small enough $\ve $ since $U$ is non empty and open.  Its closure   satisfies  
$$
\bar U_{\ve}\subset \big\{x\!\in U: d(x,U^c)\ge \ve\big\}\subset \big\{x\!\in U: d(x,U^c)> \ve'\big\}\subset U_{\ve'},\; \ve'<\ve.
$$

We recall that the notation ${\rm supp}(P)$ stands for the support of the distribution $P$ {\em in $\R^d$}. 
\begin{theorem}[Zador like theorem for Bregman divergence]\label{thm:ZadorBregman}
Let  $U\subset \mathbb{R}^d$ be a nonempty open convex subset of $\mathbb{R}^d$, let $F : U\subset \mathbb{R}^d \rightarrow \mathbb{R}$ be a $C^2$ strictly convex function such that 
\[
 \forall\, x\!\in U, \quad \nabla^2F(x) \!\in{\cal S}^{++}(d,\mathbb{R}):={\cal S}^{+}(d,\mathbb{R})\cap GL(d,\R).
\]

\noindent $(a)$  \textcolor{black}{{\em Asymptotic sharp rate}}. Let  $P$ a probability distribution supported by $U$  i.e. $P(U)=1$ such that 
\begin{equation}\label{eq:rintegZador}
\int_U\big(|\xi|^{2(1+\delta)}\vee |F(\xi)|\big)^{\frac{r}{2}}P(\mathrm{d}\xi)<+\infty\quad\mbox{ for some } \quad \delta>0.
\end{equation}
Let  $h=\frac{\mathrm{d}P^a}{\mathrm{d}\lambda_d}$ denote the density of the absolutely continuous part $P^a$ of $P$ with respect to the Lebesgue  measure $\lambda_d$ on $\mathbb{R}^d$ and 
\begin{equation}\label{eq:QrF}
Q_r(\phi_{_F},P) =  \textcolor{black}{2^{-r/2}}Q_r([0,1]^d)\big\|\det(\nabla^2 F)^{\frac{r}{2d}}\cdot h \big\|_{\frac{d}{d+r}}
\end{equation} 
where $Q_r([0,1]^d)$ is given by~\eqref{eq:Q_r}.

If
\begin{equation}\label{eq:HypoZadorBreg-chap2}
\left\{\begin{array}{ll}
(i) & \hskip-0,5cm {\rm supp}(P)\mbox{ is compact and included in $U$}\\
\mbox{or }\qquad &\\
(ii)&\hskip-0,5cm  \exists \,\eta \ge 0 \; \mbox{ s.t. }\; {\rm supp}_U(P)\subset U_{\eta} \;\mbox{ and }\; \sup_{x\in U_\eta}\vertiii{\nabla F^2(x)}<+\infty
\end{array}\right.
\end{equation}
 (with the convention  $U_0= U$) then
\begin{equation*}
    \lim_{n\rightarrow+\infty} n^{\frac1d} e_{r,n}(P,\phi_{_F}) = Q_r(\phi_{_F},P)^{\frac 1r}.
\end{equation*}
%
%
\smallskip
\noindent $(b)$ \color{black}{\em Universal lower bound}. For any distribution $P$ supported by $U$ one has
\begin{equation*}
    \liminf_{n\rightarrow+\infty} n^{\frac 1d } e_{r,n}(P,\phi_{_F}) \ge Q_r(\phi_{_F},P)^{\frac 1r}.
\end{equation*}
\color{black}
\end{theorem}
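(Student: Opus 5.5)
We follow the Graf--Luschgy architecture for the classical Zador theorem (Theorem~\ref{thm:reg_zador}), replacing the norm by local Mahalanobis divergences. The basic local fact comes from~\eqref{eq:bregmadivFC2bis}: on a small cube $C\subset U$ with center $c$, $\nabla^2F$ is uniformly close to $S=\nabla^2F(c)$ by uniform continuity on compacts. Hence for $\xi,x\in C$ (or in a slightly enlarged cube),
\[
(1-\ve)\tfrac12|\xi-x|_S^2\le\phi_{_F}(\xi,x)\le(1+\ve)\tfrac12|\xi-x|_S^2 .
\]
A Mahalanobis norm $|\cdot|_S$ is the Euclidean norm after the linear change of variables $y=S^{1/2}x$. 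Therefore the uniform distribution on $C$ satisfies
\[
n^{r/d}e_{r,n}(U(C),|\cdot|_S^2)^r\to Q_r([0,1]^d)\det(S)^{\frac r{2d}}\lambda_d(C)^{r/d}.
\]
This accounts for the factor $2^{-r/2}\det(\nabla^2F)^{\frac r{2d}}$ in~\eqref{eq:QrF}.

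\emph{Upper bound, step 1.} We first treat $P=h\lambda_d$ with $h=\sum_i s_i\mathbf 1_{C_i}$ a step function on finitely many disjoint small cubes $C_i$ whose closure lies in $U$. We allocate $n_i\approx n\,\frac{(s_i\det(S_i)^{r/(2d)}\lambda_d(C_i))^{d/(d+r)}}{\sum_j(\cdots)}$ points to $C_i$ and put in $C_i$ a near-optimal $|\cdot|_{S_i}$-quantizer. The projection remark in Section~\ref{sec:DefsVoro}, applied in $S_i$-geometry, lets us assume these quantizers lie in $\bar C_i$. We then bound $\min_{a\in\Gamma}\phi_{_F}$ on $C_i$ by the minimum over $\Gamma\cap\bar C_i$. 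Optimizing $\sum_i n_i^{-r/d}(\cdot)$ with H\"older's inequality gives the $\|\cdot\|_{d/(d+r)}$ functional. Letting $\ve\to0$ gives the upper bound.

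\emph{Upper bound, step 2.} For compactly supported absolutely continuous $P$ (case $(i)$), we approximate $h$ by step functions. For a general compact support, the singular part and the remainder are handled by the classical Graf--Luschgy device: split $P=(1-\rho)P_1+\rho P_2$, use a small fraction $\lfloor\gamma n\rfloor$ of the points for $P_2$, and use the bound $\phi_{_F}\le\frac12\sup_K\vertiii{\nabla^2F}\,|\cdot|^2$ on a convex compact $K\subset U$ (Proposition~\ref{property:2chap2}$(a)$ localized). This gives $e_{r,\gamma n}(P_2,\phi_{_F})=O(n^{-1/d})$ via the classical non-sharp Zador bound. Then we let $\rho\to0$, then $\gamma\to0$.

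\emph{Upper bound, case $(ii)$.} The Hessian is bounded on $U_\eta$, so $\phi_{_F}\le\frac M2|\cdot|^2$ there. The tail $P(\cdot\cap K^c)$ is handled by Pierce's lemma~\cite[Theorem~5.2$(b)$]{PagSpring2018} with moments of order $r(1+\delta)$, which are supplied by~\eqref{eq:rintegZador}. The Euclidean quantizer points for the tail must be projected into $U_\eta$; since $U_\eta$ is convex, this projection does not increase Euclidean distance to points of ${\rm supp}P$. This is exactly why hypothesis~$(ii)$ is stated on $U_\eta$.

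\emph{Lower bound $(b)$.} Take any $P$ and a near-optimal $\Gamma_n$. Fix finitely many disjoint small cubes $C_i$ with $\bar C_i\subset U$, chosen so that $h$ is approximately constant on each $C_i$ in $L^1$ and $\nabla^2F\approx S_i$ on a neighbourhood of each $C_i$. We add to $\Gamma_n$ a firewall set $\Delta_i$ of $o(n)$ points around each $C_i$, built from a fine grid on a shell of width $\kappa$ around $C_i$. Its purpose is that, for $\xi\in C_i$, the $\phi_{_F}$-nearest neighbour in $\Gamma_n\cup\bigcup_j\Delta_j$ lies in the $\kappa$-enlargement of $C_i$, where the local comparison with $|\cdot|_{S_i}$ holds. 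Then
\[
e_{r,n}(P,\phi_{_F})^r\ge\sum_i(1-\ve)^{r/2}2^{-r/2}\int_{C_i}\min_{a\in(\Gamma_n\cup\Delta)\cap C_i^\kappa}|\xi-a|_{S_i}^rh\,d\lambda_d .
\]
We apply the classical lower bound to each piece with $n_i=|\Gamma_n\cap C_i^\kappa|+|\Delta_i|$. Using $\sum_in_i\le n(1+o(1))$ and the reverse H\"older/convexity minimization over $(n_i)$, we get $\liminf\ge(1-\ve)^{r/2}Q_r(\phi_{_F},P_{\text{approx}})$. Finally we let the partition refine and apply Beppo Levi to the exhaustion of $U$ by cubes. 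Part~$(a)$'s lower half follows from~$(b)$.

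\emph{Main obstacle.} The hard part is the firewall lemma (Lemma~\ref{lem:FirewallLemma}). Since $\phi_{_F}$ is neither symmetric nor subject to the triangle inequality, one cannot argue that a grid point near $\xi$ beats every far point. The approach is to use the lower bound $\phi_{_F}(\xi,x)\ge\frac{\alpha_K}2|\xi-x|^2$, valid when $x$ lies in the compact convex neighbourhood $K$, and to use convexity of $t\mapsto\phi_{_F}(\xi,\xi+t(x-\xi))$ for far points $x$, possibly outside $K$. Along each segment from $\xi\in C_i$ to $x$, the divergence is increasing in $t$. It therefore exceeds its value at the exit point $y$ from $C_i^\kappa$, and that value is at least $\frac{\alpha_K}{2}\kappa^2$. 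A grid on the shell with mesh $\ll\kappa\sqrt{\alpha_K/\beta_K}$ supplies a point whose divergence is smaller than this. Monotonicity along rays is the substitute for the triangle inequality, and $|\Delta_i|$ depends only on $\kappa,\ve$, not on $n$.
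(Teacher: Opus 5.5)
Your firewall argument, which you rightly identify as the crux, does not work as written. Monotonicity along rays is correct: $\frac{d}{dt}\phi_{_F}(\xi,\xi+t(x-\xi))=t\,\nabla^2F(\xi+t(x-\xi))(x-\xi)^{\otimes 2}\ge 0$, which is the paper's \eqref{eq:minorphiF}. Convexity of that map is false in general: for Itakura--Saito, $x\mapsto\phi_{_F}(\xi,x)$ is concave on $(2\xi,+\infty)$. Monotonicity does give $\phi_{_F}(\xi,x)\ge\frac{\alpha_K}{2}\kappa^2$ for all $x\in U\setminus C_i^\kappa$. The problem is the other side of the comparison. Every shell point $a$ satisfies $|\xi-a|\ge d(\xi,\partial C_i)$, so $\phi_{_F}(\xi,a)\ge\frac{\alpha_K}{2}d(\xi,\partial C_i)^2>\frac{\alpha_K}{2}\kappa^2$ as soon as $\xi$ lies deeper than $\kappa$ inside $C_i$, i.e. for most of $C_i$. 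A mesh $\rho\ll\kappa\sqrt{\alpha_K/\beta_K}$ only helps if some grid point is within $\rho$ of $\xi$ itself.

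There are two ways to repair this. The first is to let $\Delta_i$ be a grid of mesh $\rho<\kappa\sqrt{\alpha_K/\beta_K}$ covering all of $C_i$. Then $\min_{\Delta_i}\phi_{_F}(\xi,\cdot)\le\frac{\beta_K}{2}\rho^2<\frac{\alpha_K}{2}\kappa^2$ and your chain holds verbatim. Since the lower bound only needs $|\Delta_i|$ to be independent of $n$, this is actually simpler than the paper's Appendix. The second is to compare ray by ray, as the paper does. Put the grid on an intermediate surface crossed by the ray at $\tau$. Then $\phi_{_F}(\xi,x)-\phi_{_F}(\xi,\tau)\ge c\,\alpha_K\kappa\,|\xi-\tau|$, by strong convexity on the remaining piece of length of order $\kappa$. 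Meanwhile $|\phi_{_F}(\xi,a)-\phi_{_F}(\xi,\tau)|\le\beta_K(|\xi-\tau|+\rho)\rho$ for the grid point $a$ nearest to $\tau$.

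Separately, $\sum_i n_i\le n(1+o(1))$ needs the enlargements $C_i^\kappa$ to be pairwise disjoint. Otherwise points of $\Gamma_n$ in overlaps are counted several times, with no a priori control. So your cubes must be separated by gaps wider than $2\kappa$, and the mass in those gaps is then sent to $0$. The paper avoids this by placing the wall inside $C_i$, on $\partial C_{i,\varpi}$, so that the $C_i$ can tile.

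The second gap is in your upper bound for general compactly supported $P$. You write $P=(1-\rho)P_1+\rho P_2$ and use only $e_{r,\lfloor\gamma n\rfloor}(P_2,\phi_{_F})=O(n^{-1/d})$. Then the remainder contributes $\rho\,\gamma^{-r/d}C$ to $\limsup_n n^{r/d}e_{r,n}(P,\phi_{_F})^r$. This vanishes only if $\rho\to0$, but $\rho\ge P^s(U)$ is fixed when $P$ has a singular part. The missing ingredient is $n^{1/d}e_{r,n}(P^s,|\cdot|^2)\to0$, i.e. Theorem~\ref{thm:reg_zador}$(a)$ with $h\equiv 0$. This is exactly what the paper invokes in Step~7, after the comparison $\phi_{_F}\le\frac12[\nabla F]_{\rm Lip}|\cdot|^2$.

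Moreover, an exact convex splitting with $P_1$ having a step density and $\rho$ small need not exist. If $h$ is a multiple of the indicator of a fat Cantor set, every step function dominated by $h/(1-\rho)$ vanishes. Use instead the domination $P\le h_1\lambda_d+(h-h_1)^+\lambda_d+P^s$, with $h_1$ a step function supported in $K$ and close to $h$ in $L^1$. Quantize the last two terms with $\lfloor\gamma n\rfloor$ points using the sharp classical theorem. Their contribution is at most $(\beta_K/2)^{r/2}\gamma^{-r/d}Q_r([0,1]^d)\|(h-h_1)^+\|_{\frac{d}{d+r}}$, which tends to $0$ as $h_1\to h$ because the support is compact. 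Alternatively, use the paper's Step~5 comparison with an added coarse grid.

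With these repairs your route is sound. Your lower bound applies the classical universal bound of Theorem~\ref{thm:reg_zador}$(b)$, in the norm $|\cdot|_{S_i}$, directly to $P$ restricted to each cube. That is a genuine shortcut: it avoids the proxy $P_m$, Besicovitch differentiation and the Step~5 transfer on the $\liminf$ side, and it disposes of singular parts for free.
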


\noindent {\bf Remarks.} $\bullet$  {\em Shrinking $U$ may again help}. Like for the existence of optimal quantizers (see above Section~\ref{subsec:RecallExist}), the {\em Shrinking may help} trick is again a way to weaken the above boundedness assumptions on $\nabla^2F$. It allows to replace $U$ by a   convex subset $V$ of $U$ such that
\[
\sup_{x\in V}\vertiii{\nabla F^2(x)}<+\infty.
\]
The adaptation of the proof is almost a tautology  since it boils down to replace $F$ defined on $U$ to it restriction $F_{|V}$ defined on $V$. 
This maybe significantly  more general than the above assumption which corresponds to choose $V=U_{\eta}$ when $\eta>0$ (having in mind that $U_{\eta}$ is convex).

\smallskip
\noindent This version of the theorem leaves open the case where $F$ goes to infinity at some boundary points of $U$ in $\R^d$ (when $U\neq \R^d$) which belong to the support of $P$ in $\R^d$. As can be derived from Claim~$(b)$ a necessary condition for the sharp asymptotic rate at rate $n^{-1/d}$ to hold true is that 
\[
 \big\|\det(\nabla^2 F)^{\frac{r}{2d}}\cdot h \big\|^{\frac 1r}_{\frac{d}{d+r}}<+\infty.
 \]
But what happens when $\nabla^2 F$ is not bounded at infinity at a point lying in ${\rm supp}(P)\cap \partial U$ is not solved at all by this theorem. This open problem is of interest   when dealing with Bregman divergences induced by functions $F$ defined on $U=(0, +\infty)$ like, among others,
\[
F(x)  =-\log(x) \quad \mbox{ or} \quad F(x)= x\log(x)
\]
when $0\!\in {\rm supp}(P)$.

\smallskip
\noindent  $\bullet$ We managed to never use optimal quantizers  throughout the proof of this avatar of Zador's Theorem. So it may hold even if no optimal quantizers can be proved to exist. Of course the assumptions remain similar. 

\color{black}{\smallskip
\noindent  $\bullet$ As concerns the comparison with  the result stated in~\cite{LiuBelNIPS2016} in terms of assumptions, here is what we can say: our Assumption~\eqref{eq:HypoZadorBreg-chap2} is clearly lighter than the global  uniform continuity assumption of $\nabla^2F$ made  in~\cite{LiuBelNIPS2016} when $U$ is unbounded  since we only assume continuity of $\nabla^2 F$ through our $C^2$-assumption on $F$.  On the other hand we essentially assume that $\nabla^2 F$ is bounded on $U$. This assumption as set does not appear in~\cite{LiuBelNIPS2016}. However, the theorem is stated under a (power) moment assumption which implies in a somewhat hidden way that $F$ has at most quadratic growth (which seems not to be mentioned). 
We also deal in details with distributions possibly  having a singular component which does not appear clearly either in (the extended version of)~\cite{LiuBelNIPS2016}.
\color{black}

\bigskip
\noindent {\bf Practitioner's corner.} A Bregman divergence being given, we give conditions on the support of distributions  $P$ on $(\R^d, {\cal B}or(\R^d))$ satisfying  the integrability condition~\eqref{eq:rintegZador} for which Zador's Theorem applies.

\medskip
$\blacktriangleright$ {\bf Examples  in one  dimension.} 
\begin{enumerate} 
\item {\em Regular quadratic loss function}. $F(x)= x^2$, $U= \R$,  $\phi_{_F}(\xi,x) = (\xi-x)^2$. Any of the above distributions $P$. 
\item {\em Norm--like loss function}. $F(x)= x^a$, $a>1$, $U= (0, +\infty)$, and 
$$
\phi_{_F}(\xi,x)= \xi^a +(a-1)x^a- a\, \xi \,x^{a-1}.
$$
\begin{itemize}
\item $1<a<2$: above distributions $P$   whose support is bounded away from $0$.
\item $a=2$:  any of the above distributions $P$. 
\item $a>2$:  Above distributions $P$   with compact support in $\R_+$.
\end{itemize}

\item {\em Itakura--Saito divergence.} $F(x)= -\log (x)$, $U= (0, +\infty)$,  and 
$$
\phi_{_F}(\xi,x) = \log \Big(\frac x\xi\Big)+\frac \xi x-1.
$$
Above distributions $P$ whose support is bounded away from $0$.
\item {\em I-divergence or Kullback--Leibler divergence}.  $F(x) = x\log(x)$, $U= (0, +\infty)$, and  
$$
\phi_{_F}(\xi,x)= \xi\Big( \log\big(\frac \xi x\big)-1+\frac x\xi \Big).
$$
Above distributions $P$ whose support is bounded away from $0$.
\item {\em Logistic loss}. $F(x)= x\log x +(1-x)\log(1-x)$,  $U= (0, 1)$, and 
$$
\phi_{_F}(\xi,x)=\xi \log\Big(\frac \xi x\Big) +(1-\xi)\log\Big(\frac{1-\xi}{1-v}\Big).
$$
Above distributions $P$ with compact  support included in $(0,1)$ (i.e. bounded away from $0$ and $1$).
\item {\em Softplus loss}. $F(x)=F_a(x)= \log(1+e^{ax})/a$, $a>0$, $U= \R$, and 
$$
\phi_{_F}(\xi,x) = \frac 1a\log\Big(  \frac{1+e^{a\xi}}{1+ e^{ax}}\Big) - \frac{e^{ax}}{e^{ax} +1}(\xi-x).
$$
Any of the above distributions $P$. 
\item {\em Exponential loss}. $F_\rho(x)= e^{\rho x}$,   $\rho\!\in \R$, $U= \R$, $\phi_{F_\rho}(\xi,x) = \phi_{F_1}(\rho\, \xi,\rho\, x)$  \linebreak with $ \phi_{F_1}(\xi,x) = e^\xi -e^x-e^v(\xi- x)$. 

At least  all distributions $P$ with compact support.
 \end{enumerate} 

\medskip
$\blacktriangleright$ {\bf Examples in higher dimension.} 

\medskip

\begin{enumerate} 
 \item {\em Regular quadratic loss function}. $F(x)= |x|^2$, $x\!\in U= \R^d$.

 Any of the above distributions $P$ \textcolor{black}{with finite second moment}.
 \item {\em Mahalanobis distance}. $F(x)= x^*Sx$,  $x\!\in U= \R^d$, $S\!\in {\cal S}^{++}(d,\R)$ (symmetric and positive definite), and  
 $$
 \phi_{_F}(\xi,x)= (\xi-x)^*S(\xi-x):=|\xi-x|^2_{_S}.
 $$
 
 Any of the above distributions $P$ \textcolor{black}{with finite second moment}.
\item {\em $f$-marginal divergence (additive)}. $F(x_1,\ldots,x_d)= \sum_{i=1}^d f(x_i)$, $f$ strictly convex on $U$, is defined on $U^d$, and for every $x=(x_1,\ldots,x_d)$, $\xi=(\xi_1, \ldots,\xi_d)\!\in U^d$,
$$
\phi_{_F}(\xi,x)=  \sum_{i=1}^d \phi_{_F}(\xi_i,x_i).
$$
\color{black}Distributions to be  specified (depending essentially on the behaviour of $F$ at the boundary of the support of $P$ and at $\infty$ when this support is  not a compact included in $U$).\color{black}
  \item {\em $f$-marginal divergence (multiplicative)}. $F(x_1,\ldots,x_d)= \prod_{i=1}^d f(x_i)$, $f$ strictly convex on $U$, is defined on $U^d$, and for every $x=(x_1,\ldots,x_d)$, $\xi=(\xi_1, \ldots,\xi_d)\!\in U^d$,
  $$
  \phi_{_F}(\xi,x)=  \sum_{i=1}^d \phi_{_F}(\xi_i,x_i)\prod_{j\neq i}f(x_j).
  $$
  Same distributions $P$ as above.
  \item {\em Soft max marginal $f$-divergence}.  $F(x_1,\ldots,x_d)=F_{\lambda}(x_1,\ldots,x_d)= \frac{1}{\lambda}\log\Big(\sum_{i=1}^d e^{\lambda f(x_i)}\Big)$, $f$ strictly convex on $U$, is defined on $U^d$ and for every $x=(x_1,\ldots,x_d)$, $\xi=(\xi_1, \ldots,\xi_d)\!\in U^d$,
  $$
  \phi(\xi,x)= F_{\lambda}(\xi)-F_{\lambda}(x)-   \frac{\sum_{1\le i\le d} f'(x_i)e^{\lambda f(x_i)}(\xi_i-x_i)}{\sum_{1\le i\le d} e^{\lambda f(x_i)}}.
  $$
At least  distributions $P$ with compact support.
 \end{enumerate}  
%

\section{Proof of  Theorem~\ref{thm:ZadorBregman}: Zador's theorem  for Bregman divergence}\label{sec:proofZadorBregman}
\subsection{A first step: Zador's Theorem for  Mahalanobis divergence   and  the uniform distribution over $[0,1]^d$}
\label{subsec:Mahalanobis}

We consider in this section the case where $F$ is the squared Euclidean norm  attached to a positive definite matrix $S\! \in {\cal S}^{++}(d,\R)$ (a.k.a. Mahalanobis--Bregman divergence) so that one easily checks (what is true for any squared Euclidean norm) that 
\[
\phi_{_F}(\xi,x)=  F(\xi-x) = |\xi-x|^2_S=  (\xi-x)^*S(\xi-x).
\]
By an abuse of notation we will denote $e_r(\Gamma, P,S)$ instead of $e_r(\Gamma, P, |\cdot|^2_S)$ and $e_{r,n}(P,S)$ instead of $e_{r,n}(\Gamma, P, |\cdot|^2_S)$.

First we  easily check using the linearity of $S$ that, for any $r>0$,  and every  $A$, $B\!\in \R$, $A<B$, 
\begin{equation}\label{eq:dilatationS}
e_{r,n}\big( \mathcal{U}([A,B]^d), S\big)= (B-A)e_{r,n}\big( \mathcal{U}([0,1]^d), S\big),
\end{equation}
\color{black}where $\mathcal{U}(C)$ stands for the uniform distribution over a (non-negligible) Borel set $C$.\color{black}

\medskip
\noindent {\it Proof}. This follows from the fact the mapping {$\Gamma \mapsto \Gamma_{A,B}:= \big\{(a-A)/(B-A), \, a\!\in \Gamma\big\}$ is bijective form the set of grids of size at most $n$ into itself, and that
$S$ commutes with the dilatation $\xi\mapsto(B-A)\xi$ and 
\small
\begin{align*}
e_r \big(\Gamma, \mathcal{U}([A,B]^d\big), S)^r  & = \int_{[A,B]^d}\min_{a\in \Gamma}\big((\xi-a)^*S(\xi-a)\big)^{r/2} \frac{\mathrm{d}\xi}{(B-A)^d}\\
				   &= \int_{[0,1]^d}\min_{a\in \Gamma}\big(A+(B-A)u-a)^*S(A+(B-A)u-a)\big)^{r/2}  \mathrm{d} u\\
				   &=\int_{[0,1]^d}\min_{a\in \Gamma}\big(A+(B-A)u-a)^*S(A+(B-A)u-a)\big)^{r/2}  \mathrm{d} u\\
				   &=(B-A)^{r}  \int_{[0,1]^d}  \min_{a\in \Gamma}\big(\big(u-(a-A)/(B-A)\big)^*S\big(u-(a-A)/(B-A)\big)\big)^{r/2}  \mathrm{d} u\\
				   &= (B-A)^{r} e_r \big(\Gamma_{A,B}, \mathcal{U}([0,1]^d), S\big)^r,
\end{align*}
\normalsize
where we made the change of variable $\xi= A+(B-A)u$, $u\!\in [0,1]^d$ in the second line. \hfill $\Box$

\medskip
\noindent {\bf Remark.} The above proof obviously works when considering any squared norm as a loss function as done in regular optimal quantization theory. 

\begin{proposition}\label{prop:ZadorS}
Let $S \in \mathcal{S}^{++}(d,\mathbb{R})$ be a positive definite matrix. 

Then, 
\begin{equation}\label{eqn:zador_matrix}
    \lim_{n \rightarrow \infty} n^{1/d} e_{r,n}(\mathcal{U}([0,1]^d),S) = Q_r([0,1]^d)^{1/r}\det(S)^{1/2d}.
\end{equation}
\end{proposition}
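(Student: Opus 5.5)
The plan is to reduce to the Euclidean case by the linear change of variables $y=S^{1/2}\xi$ and then apply the classical Zador Theorem~\ref{thm:reg_zador}. Write $T=S^{1/2}\in{\cal S}^{++}(d,\R)$, so that $|\xi-a|^2_S=|T\xi-Ta|^2$. For any grid $\Gamma$ with $|\Gamma|\le n$,
\[
e_r\big(\Gamma,\mathcal U([0,1]^d),S\big)^r=\int_{[0,1]^d}\min_{a\in\Gamma}|T\xi-Ta|^r\,\mathrm d\xi=\E\,\min_{b\in T\Gamma}|Y-b|^r,
\]
where $Y=TX$ with $X\sim\mathcal U([0,1]^d)$. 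Since $\Gamma\mapsto T\Gamma$ is a bijection on grids of size at most $n$ in $\R^d$, this gives
\[
e_{r,n}\big(\mathcal U([0,1]^d),S\big)=e_{r,n}\big(\mathcal U(T[0,1]^d),|\cdot|\big),
\]
the usual $L^r$-optimal error for the Euclidean norm. Here $U=\R^d$, so there is no constraint on where the quantizers live.

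Next I apply Theorem~\ref{thm:reg_zador}$(a)$ to $P=\mathcal U(T[0,1]^d)$. This distribution is compactly supported, so all moments are finite, and it is absolutely continuous with density
\[
h=\det(T)^{-1}\mathbf 1_{T[0,1]^d}=\det(S)^{-1/2}\mathbf 1_{T[0,1]^d}.
\]
With $p=\frac{d}{d+r}$, we get
\[
\|h\|_{p}=\Big(\det(S)^{-\frac p2}\lambda_d(T[0,1]^d)\Big)^{1/p}=\det(S)^{-\frac12}\det(S)^{\frac{1}{2p}}=\det(S)^{-\frac12+\frac{d+r}{2d}}=\det(S)^{\frac{r}{2d}}.
\]
Theorem~\ref{thm:reg_zador} then yields
\[
\lim_n n^{1/d}e_{r,n}=Q_r([0,1]^d)^{1/r}\big(\det(S)^{\frac r{2d}}\big)^{1/r}=Q_r([0,1]^d)^{1/r}\det(S)^{\frac1{2d}},
\]
which is exactly~\eqref{eqn:zador_matrix}.

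No step here is a real obstacle; the only points needing care are bookkeeping. First, the exponent in Theorem~\ref{thm:reg_zador} must be read correctly: the limit involves $\|h\|^{1/r}_{d/(d+r)}$, which is the quantity computed above. Second, the constant $Q_r([0,1]^d)$ is the one for the canonical Euclidean norm, which holds because the norm applied after the change of variables is $|\cdot|$.

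A more self-contained alternative would avoid invoking the full Zador theorem. One would show directly, by the usual subadditivity over a cube subdivision as in~\eqref{eq:dilatationS}, that $n^{r/d}e_{r,n}(\mathcal U([0,1]^d),S)^r$ converges, and then identify the constant through the change of variables. That route is longer and unnecessary given Theorem~\ref{thm:reg_zador}.
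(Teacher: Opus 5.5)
Your proposal is correct and follows the paper's proof essentially line for line. The paper also uses the linear change of variables by $\sqrt{S}$ to reduce to Euclidean $L^r$-quantization of $\mathcal{U}(\sqrt{S}[0,1]^d)$. It then applies the classical Zador theorem with $\|h_S\|_{\frac{d}{d+r}}=\det(S)^{\frac{r}{2d}}$, the same computation you carried out.
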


\noindent {\em Proof.}
Note that $(\sqrt{S}u)^{\otimes 2}=u^*Su$ where $\sqrt{S}$ is the unique matrix in $\mathcal{S}^{++}(\mathrm{d},\mathbb{R})$ such that $\sqrt{S}^2=S$ (which satisfies  moreover $\sqrt{S}S=S\sqrt{S}$)
\begin{align*} 
e_{r,n}\big(\mathcal{U}([0,1]^d),S \big)^r &= \inf_{|\Gamma|\leq n} \int_{[0,1]^d} \min_{a\in\Gamma} |\sqrt{S}(x-a)^{\otimes 2}|^{r/2}  \mathrm{d} x \\
 &= \inf_{|\Gamma|\leq n} \int_{[0,1]^d} \min_{a\in\Gamma} |\sqrt{S}x-\sqrt{S}a|^{r} dx.
\end{align*}

Using the fact that $x\rightarrow\sqrt{S}x$ is a linear bijection of $\mathbb{R}^d$.
The change of variable  $x =(\sqrt{S})^{-1}u$  yields
\begin{align*}
    e_{r,n}\big (\mathcal{U}([0,1]^d),S \big)^r &= \inf_{|\Gamma|\leq n} \int_{u\in\sqrt{S}[0,1]^d} \min_{a\in\sqrt{S}\Gamma} |u-a|^r \sqrt{\det S}^{-1}du = e_{r,n}(U(\sqrt{S}[0,1]^d),|\cdot|)^r.
\end{align*}
Now
$$
\mathcal{U}(\sqrt{S}[0,1]^d)=h_S(x).\lambda_d(\mathrm{d}x) := \frac{1}{\det(\sqrt{S})}\textbf{1}_{x\in\sqrt{S}[0,1]^d}.\lambda_d(\mathrm{d}x).
$$

Since 
$$
\lambda_d(\sqrt{S}[0,1]^d) =\int_{u\in\sqrt{S}[0,1]^d} \textbf{1} \lambda_d(\mathrm{d}u) = \int_{x\in[0,1]^d}\det(\sqrt{S})\lambda_d(\mathrm{d}x)=\det(\sqrt{S})
$$
then,
$$
\limsup_{n\rightarrow+\infty} n^{r/d} e_{r,n}\big(\mathcal{U}([0,1]^d),S\big)^r = \limsup_{n\rightarrow+\infty} n^{r/d} e_{r,n}\big(U(\sqrt{S}[0,1]^d),|\cdot|\big)^r.
$$

By the original  Zador Theorem \ref{thm:reg_zador}, we get
\begin{align*}
    \lim_{n\rightarrow+\infty} n^{r/d} e_{r,n}(\mathcal{U}([0,1]^d),S)^r &=  Q_r([0,1]^d)\times\|h_S\|_{d/(d+r)} \\
    &= Q_r([0,1]^d)\times\frac{\det\sqrt{S}^{1+r/d}}{\det\sqrt{S}}.
\end{align*}
Finally
\[
    \lim_{n\rightarrow+\infty} n^{r/d} e_{r,n}([0,1]^d,S)^r =Q_r(\mathcal{U}([0,1]^d)\times(\det S)^{r/2d}
 \]
 or, equivalently, 
\[
\hskip 3,25cm     \lim_{n\rightarrow+\infty} n^{1/d} e_{r,n}([0,1]^d,S) = Q_r(\mathcal{U}([0,1]^d)^{1/r}\times(\det S)^{1/2d}. \hskip 3,25cm    \Box
\]

\subsection{Proof of Theorem~\ref{thm:ZadorBregman}}

\noindent {\it Proof.} $(a)$ In steps~1 to~7, we consider  an absolutely continuous distribution $P=h\cdot\lambda_d$  with {\em a compact support included in $U$}. Let  $C$ be a closed hypercube of $\mathbb{R}^d$ with edges parallel to the coordinate axis such that  $\mathrm{supp}(P)\subset C$. Let $L$ be  the common edge-length of $C$.  

Let $m\!\in \mathbb{N}$ and let $(C_i)_{1\leq i\leq m^d}=(C^{(m)}_i)_{1\leq i\leq m^d}$ be a covering of $m^d$   closed hypercubes of $C$ with edges parallel to the coordinate axis such that 
$$
\forall\, i\neq j,\quad \mathring{C}_i\cap \mathring{C}_j= \varnothing \quad \mbox{ and }\quad \bigcup_{i=1,\ldots,m^d} C_i = C.
$$ 
We denote by  $c_i$ the midpoint of $C_i$. Note that all   these small hypercubes  are translated in $\mathbb{R}^d$ from $[0, \frac{L}{m}]^d$.  Hence, their common diameter (for the canonical Euclidean norm) is $\frac{\sqrt{d}L}{m}$.
At some places   we will implicitly consider   that  these hypercubes are half-open so that the family \textcolor{black}{$(C_i)_{1\le i\le m^d}$ makes} up a true  {\em partition} of $C$ with of course  no impact on the proofs.

As ${\rm supp}(P) $ is compact it is clear that 
$$
\ve_0:=\tfrac 13d\big({\rm supp}(P), U^c\big)>0,
$$ 
where $d\big({\rm supp}(P), U^c\big):=\inf_{x\in {\rm supp}(P)}d(x,U^c)$, so that 
\[
{\rm supp}(P) \subset U_{2\ve_0}\subset \bar U_{2\ve_0}.
\]
Let us define
\[
I_m= \big\{i\!\in \{1,\ldots,m^d\}: C_i\cap \bar U_{2\ve_0}   \neq \varnothing\big\}.
\]
As the diameter of hypercubes $C_i$ is $\frac{\sqrt{d}L}{m}$, it is clear that for large enough $m$, namely 
\begin{equation}\label{eq:m0}
m\ge m_0:= \left\lfloor \frac{\sqrt{d}L}{\ve_0}\right\rfloor+1,
\end{equation}
one has 
\[
\textcolor{black}{
C\cap \bar U_{2\ve_0}\subset \bigcup_{i\in I_m}C_i \subset     C \cap U_{2\ve_0-\frac{\sqrt{d}L}{m}}\subset C \cap U_{\ve_0}\subset C \cap  \bar U_{\ve_0}
}
\]
since, for $m>\frac{2\sqrt{d}L}{\ve_0}$ i.e. $\frac{\sqrt{d}L}{m}<\ve_0$. Finally,
\[
{\rm supp}(P) \subset \bigcup_{i\in I_m}C_i \subset C \cap  \bar  U_{\ve_0}\subset U.
\] 

In particular the function $F$ is well-defined on every small hypercube $C_i$, $i\!\in I_m$. At this stage we may define for every integer $m\ge1$, 
\begin{equation}\label{eq:epsilonm}
\ve_m = \max_{i\in I_m} w \Big (\nabla^2 F,C_i,\frac{\sqrt{d}L}{m}\big)\le w \Big (\nabla^2 F,\bar  U_{\ve_0}\cap C,\frac{\sqrt{d}L}{m}\Big),
\end{equation}
\textcolor{black}{where $w(f,A,\delta)$ denotes the continuity modulus with amplitude $\delta$ of a function $f:\R^d\to \R^q$ restricted to a subset $A\subset \R^d$, $\R^d$ being equipped with the canonical Euclidean norm}. As $C \cap \bar  U_{\ve_0}$ is compact (as a closed subset of the compact set $C$) and $\nabla^2F$ is continuous on $U$ hence uniformly continuous on $C \cap \bar  U_{\ve_0}$, we have
\[
\forall\, m\ge 1, \quad\ve_m<+\infty \quad \mbox{ and }\quad \lim_{m\to +\infty}\ve_m = 0.
\]
\noindent  {\sc Step 1} ({\em Upper Bound for the proxy $P_m$ of $P$}). As a preliminary, note that $F$ being continuously twice differentiable, a second order Taylor expansion yields, for every $x, a\!\in U$
$$
\phi_{_F}(\xi,a) = F(\xi) - F(a) - \langle \nabla F(a)\,|\,\xi-a\rangle = \left ( \int_0^1 (1-u) \nabla^2F(a+u(\xi-a)) \mathrm{d}u \right ) \cdot (\xi-a)^{\otimes2}.
$$

Let $\Gamma \subset U $ be a (finite) quantizer such that for every $i\! \in I_m$, $|\Gamma\cap C_i|>0  $.  For every $\xi\!\in C_i$ we consider  the {\em local nearest neighbour} $a(\xi)\in C_i\cap \Gamma$  defined by 
$$
    a(\xi) \in  {\rm argmin}_{a\in\Gamma\cap \,C_i} \big(F(\xi) - F(a) - \langle \nabla F(a) \,|\, \xi-a \rangle\big). 
$$

This local assignment rule is clearly  sub-optimal  since we restrict our search for the nearest neighbour of $\xi \!\in C_i$ to $ C_i\cap \Gamma$.
But we may reasonably hope that it will be enough to get a sharp upper bound, at least when $m$ is large enough. 

As $C_i$ is convex with diameter $\frac{\sqrt{d}L}{m}$, if $\xi,a\!\in C_i$ then  for every $ u \in [0,1]$, $a+u(\xi-a) \in C_i$ and 
\[
\vertiii{ \nabla^2F(a+u(\xi-a)) - \nabla^2 F(c_i)} \le  w\Big (\nabla^2 F, C_i, \frac{\sqrt{d}L}{m}\Big).
\]  
In particular this implies by the ``left'' triangle inequality that
\[
\vertiii{ \nabla^2F(a+u(\xi-a))} \le\vertiii{ \nabla^2 F(c_i) }+ w\Big (\nabla^2 F, C_i, \frac{\sqrt{d}L}{m}\Big).
\]
For a positive definite symmetric matrix $S$ (such is the case of   $\nabla^2F(a+u(x-a))$ and $\nabla^2 F(c_i)$), $\vertiii{S}= \sup_{u:|u|=1} u^*Su$. Hence, one easily checks that  the  above inequality can be rewritten \color{black}in terms of ordering of symmetric matrices~(\footnote{$S\le T$ if $T-S \!\in {\cal S}^+(d, \R)$.}) \color{black} as 
\begin{align*}
    \nabla^2F(a+u(\xi-a)) &\leq \nabla^2 F(c_i) + w\Big (\nabla^2 F, C_i, \frac{\sqrt{d}L}{m}\Big)I_d
    \\
    &= \nabla^2F(c_i) + \ve_m I_d.
\end{align*}

Then, for every $\xi,\, a \in C_i$,
\begin{align}
    \nonumber \int_0^1(1-u)\nabla^2F(a+u(\xi-a))\mathrm{d}u &\leq  \int_{0}^1 (1-u)(\nabla^2F(c_i)+\ve_m I_d)\textbf{1}_{\{a+u(\xi-a)\in C_i\}} \mathrm{d}u \\
  \label{eq:majorBregdiv} &= \frac{1}{2}\sum_{i\in I_m} (\nabla^2 F(c_i) + \ve_m I_d),
\end{align}
where $I_d$ denotes the identity  of the space $\R^d$.
We set 
\begin{align}
\label{eq:nabla2Fm}    \nabla^2 F_m(x) &= 
\sum_{i\in I_m}(\nabla^2 F(c_i) + \ve_m I_d) \textbf{1}_{\{x\in C_i\}},\\
\label{eq:Pmhm}     P_m &= \sum_{i\in I_m} P(C_i)\mathcal{U}(C_i) \quad\text{ and } \quad h_m = \sum_{i\in I_m} \frac{P_m(C_i)}{\lambda_d(C_i)}\textbf{1}_{C_i}= \Big(\frac mL\Big)^d\sum_{i\in I_m}P_m(C_i)\textbf{1}_{C_i},
\end{align}
where $\mathcal{U}(C_i) $ denotes the  uniform distribution over the hypercube $C_i$.
As a consequence we obtain the upper-bound
\begin{equation}\label{eq:magotphi1}
\forall\, {i\in I_m}, \; \forall\, \xi,\, a \!\in C_i, \quad \phi_{_F}(\xi,a)\le\textcolor{black}{ \tfrac 12}\nabla^2 F_m(\xi,a) (\xi-a)^{\otimes 2}.
\end{equation}

Then, it follows from~\eqref{eq:magotphi1}  and the above definition of $\nabla^2F_m$ that, for any quantizer $\Gamma\subset U$, 
\begin{align}
   \nonumber  e_{r}(\Gamma, P_m,\phi_{_F})^r &\leq 2^{-\frac r2} \int_{\cup_{i\in I_m}C_i} \min_{a\in \Gamma} \big(\nabla^2F_m(\xi)(\xi-a)^{\otimes2}\big)^\frac{r}{2} P_m(\mathrm{d}\xi)\\
   \nonumber  &=2^{-\frac r2}  \sum_{i\in I_m} P(C_i)  \int_{C_i} \min_{a\in \Gamma} \big(\nabla^2F_m(\xi)(\xi-a)^{\otimes2}\big)^\frac{r}{2} \frac{\mathrm{d}\xi}{\lambda_d(C_i)}\\
   \label{eq:majorer} &\leq 2^{-\frac r2} \Big(\frac{m}{L}\Big)^d \sum_{i\in I_m} P(C_i) \int_{C_i} \min_{a\in\Gamma\cap C_i}\big(\nabla^2F_m(\xi)(\xi-a)^{\otimes2}\big)^{\frac{r}{2}} \mathrm{d}\xi,
\end{align}
where we used in the last line that  for every $i\!\in I_m$, $\Gamma\cap C_i\subset \Gamma$ and  $\lambda_d(C_i)\leq (L/m)^d$. Indeed this is the mathematical form of  our local (suboptimal) assignment rule .

Let $n_i= |\Gamma\cap C_i |$ and let $\Gamma^{o,i}_{n_i}= \big\{\frac{m}{L}(a-c_i), \; a\!\in \Gamma\cap C_i\big\}\subset [-\frac 12,\frac12]^d$.   
The change of variables $\xi := c_i + \frac{L u}{m}$ yields :
\begin{align*}
    \int_{C_i} \min_{a\in\Gamma\cap C_i} (\nabla^2F_m(\xi)&(\xi-a)^{\otimes2})^{\frac{r}{2}} \lambda_d(\mathrm{d}\xi) \\
    &= L^d m^{-d}\int_{[-\frac{1}{2},\frac{1}{2}]^d} \min_{a'\in\Gamma^{o,i}_{n_i}}\Big ( \nabla^2 F_m(c_i)\Big (\Big (\frac{L u}{m}+c_i\Big )-\Big (\frac{L a'}{m}+c_i\Big )\Big)^{\otimes2} \Big )^{\frac{r}{2}}\lambda_d(\mathrm{d}u) \\
    & = L^{d+r} m^{-(d+r)}\int_{[-\frac{1}{2},\frac{1}{2}]^d} \min_{a'\in\Gamma^{o,i}_{n_i}} (\nabla^2F_m(c_i)(u-a')^{\otimes2})^{\frac{r}{2}} \lambda_d(\mathrm{d}u).
\end{align*}

Plugging this identity into~\eqref{eq:majorer} yields 
\begin{align*}
     e_{r}(\Gamma, P_m,\phi_{_F})^r & \le 2^{-\frac r2}  L^rm^{-r} \sum_{i\in I_m}\,\textcolor{black}{P(C_i)} \int_{[-\frac{1}{2},\frac{1}{2}]^d} \min_{a'\in\Gamma^{o,i}_{n_i}} (\nabla^2F_m(c_i)(u-a')^{\otimes2})^{\frac{r}{2}} \lambda_d(\mathrm{d}u).
\end{align*}
Hence, by Proposition~\ref{prop:ZadorS} applied to the hypercube  $[-\frac{1}{2},\frac{1}{2}]^d$ and the fixed distortion matrix $S=\nabla^2 F_m(c_i)$, we get:  
\[
     e_{r,n}(\Gamma, P_m,\nabla^2F)^r \leq 2^{-\frac r2}  L^rm^{-r} \sum_{i\in I_m} n_i^{-\frac{r}{d}} (1 + \eta_i(n_i)) \det(\nabla^2 F_m(c_i))^{\frac{r}{2d}} P(C_i) Q_r([0,1]^d),
\]
where $\displaystyle \lim_{n\rightarrow +\infty} \eta_i(n)=0$ so that
$$
    e_{r,n}(P_m,\nabla^2F)^r \leq 2^{-\frac r2}  Q_r([0,1]^d) L^r m^{-r} \max_{i\in I} (1 + \eta_i(n_i)) \sum_{i\in I_m} n_i^{-\frac{r}{d}} \det(\nabla^2 F_m(c_i))^{\frac{r}{2d}} P(C_i).
$$

To specify the quantizer $\Gamma$ and in particular the integers $n_i= |\Gamma\cap C_i|$, we rely on the reverse H\"older inequality: let $x_i \!\in (0,+\infty) $, $i\in I_m$ such that $\sum_{i\in I_m}x_i=1$ and let $y_i$, $i\!\in I_m$ be positive real  numbers. The reverse H\"older inequality applied to the conjugate exponents $p=-\frac{d}{r}<0$ and $q=\frac{d}{d+r}\!\in (0,1)$ implies that  
\begin{equation}
    \sum_{i\in I_m} x_i^{-\frac{r}{d}}y_i \geq \left ( \sum_{i\in I_m} x_i \right )^{-\frac{r}{d}} \left ( \sum_{i\in I_m} y_i^{\frac{d}{d+r}} \right )^{1+\frac{r}{d}} = \left (\sum_{i\in I_m} y_i^{\frac{d}{d+r}} \right )^{1+\frac{r}{d}}
\end{equation}
with equality if and only if $x_i = \frac{y_i^{d/d+r}}{\sum_{j\in I_m}y_j^{d/d+r}}>0$.

\smallskip
We apply this result to $y_i = \det(\nabla^2 F_m)^{\frac{r}{2d}}P(C_i)>0$ and we set $n_i = \lfloor n x_i\rfloor \rightarrow\infty$ as $n\rightarrow +\infty$ so that
\[
n^{r/d} \sum_{i\in I_m} \lfloor nx_i\rfloor^{-\frac{r}{d}}\det(\nabla^2 F_m)^{\frac{r}{2d}}P(C_i)\to\left( \sum_{i\in I_m}\big(\det(\nabla^2 F_m)^{\frac{r}{2d}}P(C_i)\big)^{\frac{d}{d+r}}\right)^{1+\frac{r}{d}}\quad \mbox{as}\quad n\to+\infty.
\]

\color{black}Hence
\begin{align*}
    \limsup_{n\rightarrow+\infty} n^{r/d} e_{r,n}(P_m,\nabla^2 F_m)^r & \leq  2^{-\frac r2} Q_r([0,1]^d)\textcolor{black}{L^r }m^{-r} \left (\sum_{i\in I_m}\det(\nabla^2 F_m(c_i))^{\frac{r}{2d}\cdot\frac{d}{d+r}}P(C_i)^{\frac{d}{d+r}}\right)^{1+\frac{r}{d}} \\
    &=2^{-\frac r2}  Q_r([0,1]^d)\textcolor{black}{L^r } \left (\sum_{i\in I_m}\det(\nabla^2 F_m(c_i))^{\frac{r}{2(d+r)}}m^{-\frac{rd}{r+d}}P(C_i)^{\frac{d}{d+r}}\right)^{1+\frac{r}{d}}\!\!\!.
\end{align*}
On the other hand, one has, by the definition of $h_m$,
\begin{align*}
    \int_{\R^d} \det(\nabla^2 F_m)^{\frac{r}{2(d+r)}}(x)h_m^{\frac{d}{d+r}}(x) \lambda_d(\mathrm{d}x) &= \sum_{i\in I_m} \Big(\frac{m}{L}\Big)^{\frac{d^2}{d+r}}P(C_i)^{\frac{d}{d+r}}\det(\nabla^2 F_m(c_i))^{\frac{r}{2(d+r)}} \Big(\frac{L}{m}\Big)^{d} \\
    &= \sum_{i\in I_m}\Big(\frac m L\Big)^{\frac{d^2}{d+r}-d }P(C_i)^{\frac{d}{d+r}}\det(\nabla^2 F_m(c_i))^{\frac{r}{2(d+r)}}\\
    &=L^{\frac{rd}{r+d}}  \sum_{i\in I_m} m^{-\frac{rd}{d+r}}P(C_i)^{\frac{d}{d+r}}\det(\nabla^2 F_m(c_i))^{\frac{r}{2(d+r)}}
\end{align*}
so that
\[
\left(\sum_{i\in I_m} m^{-\frac{rd}{d+r}}P(C_i)^{\frac{d}{d+r}}\det(\nabla^2 F_m(c_i))^{\frac{r}{2(d+r)}}\right)^{1+\frac rd}= L^{-r}  \|\det(\nabla^2 F_m)^{\frac{r}{2d}}h_m\|_{L^{\frac{d}{d+r}}(\lambda_d)} .
 \]
 Consequently
\begin{equation}\label{eq:limsupPm}
\limsup_{n\rightarrow+\infty} n^{r/d} e_{r,n}(P_m,\nabla^2 F_m)^r \leq 2^{-\frac r2}  Q_r([0,1]^d) \|\det(\nabla^2 F_m)^{\frac{r}{2d}}h_m\|_{L^{\frac{d}{d+r}}(\lambda_d)}.
\end{equation}
\color{black}

\noindent  {\sc Step 2}
({\em Lower bound for Bregman~I: preliminaries}). As $C \cap \bar U_{\ve_0}$ is a compact subset of $U$   as well as the unit (Euclidean) sphere of $\R^d$, temporarily denoted $ S(0,1)$,  the mapping
$ S(0,1)\times (C \cap \bar U_{\ve_0})\owns (u,\xi)\mapsto u^*\nabla^2 F(\xi)u\!\in \mathbb{R}_+$ is continuous and, in fact,  always positive since  $\nabla^2F(\xi)\!\in {\cal S}^{++}(d,\mathbb{R})$ for every $\xi\!\in U$. Consequently there exists $\kappa_{\min}>0$ such that

\[
\forall\, u\!\in \mathbb{R}^d, \; \forall\, x\!\in C \cap \bar U_{\ve_0}, \quad u^*\nabla^2F(x)u \ge \kappa_{\min} |u|^2
\]  
or, equivalently,
\begin{equation}\label{eq:ellipticity}
\forall\, x\!\in C \cap \bar U_{\ve_0}, \quad  \nabla^2F(x)\ge \kappa_{\min}I_d.
\end{equation}

%

Moreover, as $\nabla^2F$ is continuous on $U$, it is uniformly continuous on $C \cap \bar U_{\ve_0}$. Hence 
\[
\lim_{\delta\to 0} w\left (\nabla^2 F, C \cap \bar U_{\ve_0},\delta\right )=0.
\]

Let us consider again the tessellation $(C_i)_{\{i=1,\ldots,m\}}$ of an hypercube $C$ with edge-length $L>0$ that contains $U$ and the associated notations ($C_i$ are hypercubes centered at $c_i$ with  edge-length $\frac{L}{m}$, $I_m$, and diameter $\frac{\sqrt{d}L}{m}$,  etc) from Step~1. 
In particular,  for $m$ large enough, say $m\ge m_1$ (we assume that $m_1\ge m_0$), 
\[
\max_{i\in I_m}  w\left ( \nabla^2 F, C_i,\frac{\sqrt{d}L}{m}\right )\le w\left ( \nabla^2 F, C \cap \bar U_{\ve_0},\frac{\sqrt{d}L}{m}\right )= \ve_m \le \frac{\kappa_{\min}}{2}.
\]
Consequently,  one has for the pre-order on ${\cal S}^+(d, \R)$, 
\begin{align*}
 \forall\, i\!\in I_m, \;\forall\, \xi \!\in C_i, \quad    \nabla^2 F(\xi) &\ge   \nabla^2 F(c_i) -  w\left ( \nabla^2 F, C_i, \frac{\sqrt{d}L}{m} \right )  I_d\\
 &    \ge \nabla^2 F(c_i) - \ve_m  I_d \geq  \big(\kappa_{\min}- \frac{\kappa_{\min}}{2}\big)I_d= \frac{\kappa_{\min}}{2}I_d >0.
 \end{align*}

Consequently, for every $i\!\in I_m$, every $\xi$, $a\!\in C_i$ 
\begin{align}
\nonumber \phi_{_F}(\xi,a) &= \int_0^1(1-u)  \nabla^2F(a+u(\xi-a))\mathrm{d}u (\xi-a)^{\otimes 2} \\
 \nonumber&\ge \frac 12  \big(  \nabla^2 F(c_i) - \ve_m  I_d\big) (\xi-a)^{\otimes 2} \\
 \label{eq:minorphi}& = \tfrac 12 \big(\nabla^2F_m(\xi) - 2\ve_m I_d\big)(\xi-a)^{\otimes 2}.
 \end{align}

%
%
\noindent  {\sc Step 3} ({\em Lower bound for Bregman~II: Firewall lemma}). 
%
%
The firewall lemma proves that one may find  finitely many points of the boundary of an hypercube so that any interior point far enough from the boundary is closer to this set of points than to any point outside the hypercube. 

This will be extensively applied to the small hyper-cubes $C_i$ of a tessellation to establish the lower bound for the Bregman quantization error. 

\begin{proposition}[Firewall Lemma] \label{lem:FirewallLemma}Let $C_i\subset C \cap\bar U_{\ve_0}$, $i\!\in I_m$,  be a closed hypercube with edges parallel to the coordinate  axis with length $L/m>0$ and  center $c_i$.
Let $\varpi \!\in (0, L/2m]$ and let $C_{i,\varpi} $ be the  hypercube with edge-length $L/m-2\varpi$ obtained as the image of $C_i$ by the contraction centered at $c_i$ with ratio $1-\varpi$  (see Figure~\ref{fig:firewall}).
Then there exists a finite set $\gamma_i = \gamma_i^{(\varpi)}\subset \partial C_{i,\varpi}$ (boundary of $C_{i,\varpi}$) such that 
$$
\forall \xi \in C_{i,\varpi}, \quad \min_{a\in\gamma_i} \phi_{_F}(\xi,a) \leq \min_{x\in C\setminus C_{i} } \phi_{_F}(\xi,x).
$$
Moreover the cardinality of the sets $\gamma_i$, $i\!\in I_m$,  only depends on the operator norm \linebreak $\displaystyle \vertiii{\nabla^2 F}_{C \cap \bar U_{\ve_0}} := \sup_{\xi\in C \cap \bar U_{\ve_0}}\vertiii{\nabla^2 F(\xi)}$, $\varpi$, $L$, $d$ and the uniform continuity modulus  $w(\nabla^2F, C \cap \bar U_{\ve_0},\cdot)$ on the compact  $C \cap \bar U_{\ve_0}$.
\end{proposition}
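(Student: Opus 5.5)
Our plan is to reduce the statement to a local comparison between $\phi_{_F}$ and the quadratic forms $\tfrac12 \nabla^2F(c_i)(\cdot)^{\otimes 2}$, using the two-sided bounds of Steps~1--2, and then to build $\gamma_i$ as a fine grid on $\partial C_{i,\varpi}$.

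\textbf{Step (a): a lower bound for competitors outside $C_i$.} Fix $\xi\in C_{i,\varpi}$ and $x\in C\setminus C_i$, and set $\kappa_{\max}:=\vertiii{\nabla^2F}_{C\cap\bar U_{\ve_0}}$. The segment $[\xi,x]$ exits $C_i$ at a point $y\in\partial C_i$ with $|y-\xi|\ge \varpi$. We would like $\phi_{_F}(\xi,x)\ge c\,\varpi^2$ for a constant $c$ depending only on $\kappa_{\min}$. Convexity of $t\mapsto \phi_{_F}(\xi,\xi+t(x-\xi))$ gives this: the function is convex, vanishes at $t=0$ and is increasing, since its derivative is $\langle\nabla F(\xi+t(x-\xi))-\nabla F(\xi)\,|\,x-\xi\rangle\ge0$. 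Hence $\phi_{_F}(\xi,x)\ge\phi_{_F}(\xi,y)$.

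For $\phi_{_F}(\xi,y)$ we use formula~\eqref{eq:bregmadivFC2bis}. The segment $[\xi,y]$ lies in $C_i\subset C\cap\bar U_{\ve_0}$, so \eqref{eq:ellipticity} gives $\phi_{_F}(\xi,y)\ge \tfrac{\kappa_{\min}}2|\xi-y|^2\ge \tfrac{\kappa_{\min}}2\varpi^2$.

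A caveat: the hypothesis only gives $x\in C$, not $x\in U$. If $\phi_{_F}(\xi,x)$ is only defined for $x\in U$, we restrict to $x\in (C\setminus C_i)\cap U$. The monotonicity argument on the segment still works because $U$ is convex, so $[\xi,x]\subset U$.

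\textbf{Step (b): the firewall points.} We take the points $a\in\gamma_i$ on $\partial C_{i,\varpi}$, so that the segment $[\xi,a]$ stays in $C_{i,\varpi}\subset C_i$. The upper bound of Step~1 then gives $\phi_{_F}(\xi,a)\le \tfrac{\kappa_{\max}}2|\xi-a|^2$. With this, a single nearest boundary point does not suffice, since $\xi$ may be deep inside $C_{i,\varpi}$. The comparison must therefore be made through the segment toward $x$, not through raw distances.

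The refined plan is as follows. Let $z=[\xi,x]\cap\partial C_{i,\varpi}$, and let $a\in\gamma_i$ be the grid point nearest to $z$, with $|a-z|\le\rho$ where $\rho$ is the mesh of the grid. By the monotonicity argument of Step~(a), $\phi_{_F}(\xi,x)\ge\phi_{_F}(\xi,y)$, where $y$ is the exit point of $[\xi,x]$ from $C_i$. Since $z$ lies on the same ray as $y$, and $|y-z|\ge\varpi$ componentwise in the exit direction, we have $y=\xi+s(z-\xi)$ with $s\ge1+\varpi/|z-\xi|\ge 1+\varpi/(\sqrt d L/m)$.

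We compare the two divergences by freezing the Hessian at $c_i$. Write $\phi_{_F}(\xi,y)\ge\tfrac12(\nabla^2F(c_i)-\ve I_d)(y-\xi)^{\otimes2}$ and $\phi_{_F}(\xi,a)\le\tfrac12(\nabla^2F(c_i)+\ve I_d)(a-\xi)^{\otimes2}$, where $\ve$ is the modulus $w(\nabla^2F,C\cap\bar U_{\ve_0},\sqrt dL/m)$, as in \eqref{eq:minorphi} and \eqref{eq:magotphi1}. Then
\[
\tfrac12\nabla^2F(c_i)(y-\xi)^{\otimes2}\ge s^2\,\tfrac12\nabla^2F(c_i)(z-\xi)^{\otimes2},
\]
and $(a-\xi)$ differs from $(z-\xi)$ by at most $\rho$.

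\textbf{Main obstacle.} The hard part is closing the inequality uniformly. We need
\[
s^2(\kappa' - \ve)|z-\xi|^2 \;\ge\; (\kappa'+\ve)(|z-\xi|+\rho)^2 ,
\]
with errors expressed relative to $\kappa_{\min}$ and $\kappa_{\max}$, and in particular when $|z-\xi|$ is small, i.e.\ when $\xi$ is near $\partial C_{i,\varpi}$. That near-boundary case is handled directly: there $\phi_{_F}(\xi,a)\le\tfrac{\kappa_{\max}}2(|z-\xi|+\rho)^2$, which is small compared with $\tfrac{\kappa_{\min}}2\varpi^2$ from Step~(a).

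The case split we intend is $|z-\xi|\le\theta\varpi$ versus $|z-\xi|>\theta\varpi$. In the first case we use the direct bound. In the second, the factor $s^2\ge(1+\varpi/(\sqrt dL/m))^2$ absorbs both the Hessian oscillation $\ve$ and the mesh error $\rho/(\theta\varpi)$. This holds provided $m$ is large enough that $\ve$ is small relative to $\kappa_{\min}\varpi m/L$, and this is the place where the uniform continuity modulus enters. We then choose $\rho$ small in terms of $\theta$, $\varpi$, $\kappa_{\min}$ and $\kappa_{\max}$.

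Finally, $|\gamma_i|\lesssim (L/(m\rho))^{d-1}$, which depends only on the listed quantities, as claimed. If the ratio $\varpi m/L$ degenerates, we would fall back on taking $\varpi$ proportional to $L/m$, which is the regime in which the lemma is applied.
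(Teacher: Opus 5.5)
There is a genuine gap in Step~(b), exactly where you locate the main obstacle.

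Freezing the Hessian at $c_i$ on both sides costs an additive error of order $\ve_m|y-\xi|^2$, where $\ve_m=w(\nabla^2F,C\cap\bar U_{\ve_0},\sqrt d L/m)$. This is a modulus of continuity taken at the scale of the whole cube. It is fixed once $m$ is fixed, and refining $\gamma_i$ does not reduce it. The only gain you can set against it is
$\tfrac12\nabla^2F(c_i)\big((y-\xi)^{\otimes2}-(z-\xi)^{\otimes 2}\big)\ge\tfrac{\kappa_{\min}}2\,|y-z|\,\big(|y-\xi|+|z-\xi|\big)\ge \tfrac{\kappa_{\min}}2\,\varpi\,|y-\xi|$.
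Take $\xi$ near one face, a ray along a minimal eigendirection, and $|y-\xi|\sim L/m$. In that case the comparison you set up can only close if $\ve_m\lesssim\kappa_{\min}\varpi m/(\sqrt d L)$.

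That condition couples $m$ and $\varpi$, and the lemma does not assume it. The lemma is stated for every $\varpi\in(0,L/2m]$ with $m$ given. Step~4 uses it with $m$ fixed and $\varpi\to0$ (``letting $\varpi\to0$ yields\dots''). So your fallback, that $\varpi$ proportional to $L/m$ is the regime in which the lemma is applied, does not match its actual use. Restructuring Step~4 with $\varpi=\theta L/m$, then $m\to\infty$, then $\theta\to0$ might save the theorem, but you would then be proving a weaker lemma.

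The missing idea is to compare divergences exactly along the ray, as the paper does, and never pass through a frozen quadratic form.

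\emph{Gap along the ray.} Write $z=\xi+t_z(y-\xi)$. The integral representation gives the exact identity \eqref{eq:identitephiF}:
$\phi_{_F}(\xi,y)-\phi_{_F}(\xi,z)=\int_{t_z}^1u\,\nabla^2F(\xi+u(y-\xi))(y-\xi)^{\otimes2}\,\mathrm{d}u\ge\tfrac{\kappa_{\min}}2\big(|y-\xi|^2-|z-\xi|^2\big)\ge\tfrac{\kappa_{\min}}2\varpi^2$.
This gap carries no Hessian-oscillation error at all.

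\emph{Perturbation in the second variable.} Compare $\phi_{_F}(\xi,a)$ with $\phi_{_F}(\xi,z)$ directly. Since $\nabla_x\phi_{_F}(\xi,x)=-\nabla^2F(x)(\xi-x)$, you get $|\phi_{_F}(\xi,a)-\phi_{_F}(\xi,z)|\le\kappa_{\max}\sqrt d\,\tfrac Lm\,|a-z|$ for $\xi,a,z\in C_{i,\varpi}$. The paper gets an equivalent bound with the modulus of $\nabla^2F$ taken at the mesh scale $\rho L/m$, not at the cube scale.

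Now the only error is controlled by the mesh, which is free. Any mesh below $\kappa_{\min}\varpi^2 m/(2\sqrt d\,\kappa_{\max}L)$ works for every $m$ and every $\varpi$. Your near-boundary case split also becomes unnecessary.

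Separately, there is a small slip in Step~(a). The derivative you write, $\langle\nabla F(\xi+t(x-\xi))-\nabla F(\xi)\,|\,x-\xi\rangle$, is that of $t\mapsto\phi_{_F}(\xi+t(x-\xi),\xi)$, not of $g(t)=\phi_{_F}(\xi,\xi+t(x-\xi))$. The function $g$ need not be convex. It is nondecreasing, however, because $g'(t)=t\,\nabla^2F(\xi+t(x-\xi))(x-\xi)^{\otimes2}\ge0$, so your conclusion $\phi_{_F}(\xi,x)\ge\phi_{_F}(\xi,y)$ still holds.
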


\noindent {\bf Remarks.} $\bullet$ This Proposition    is probably    the most demanding  step of the proof of Zador's Theorem for Bregman divergences proof in the sense that it significantly differs from its counterpart in~\cite[Section 6] {GrafL2000} (this reference provides the first fully rigorous proof of Zador's Theorem in the ``classical'' setting where  the loss function is the power of  a norm). 
This is due to the fact that, by contrast with this classical setting,  Bregman divergences $\phi_{_F}$ are never isotropic except precisely when $F$ is the squared canonical Euclidean norm (up to an affine function).  Let us make things more precise. 

Assume that $F$ is twice differentiable on $U=\R^d$ and satisfies 
\[
\forall\, \xi\, x\!\in \R^d,\quad \phi_{_F}(\xi, x) = \varphi(\xi-x),
\]
where $\varphi:\R^d \to \R_+$ is differentiable. Then, as 
\[
\forall\, \xi,\, x\!\in \R^d,\quad \partial_x\phi_{_F}(\xi,x) = -\nabla F(x) +\nabla F(x) -\nabla^2F(x)(\xi-x)=-\nabla^2F(x)(\xi-x)
\]
the above equality implies
\[
\forall\, \xi,\, x\!\in \R^d,\quad\nabla^2F(x) (\xi-x)= \nabla \varphi(\xi-x)
\]
or, equivalently,
\[
\forall\, x,\,h\!\in \R^d,\quad \nabla^2F(x)h = \nabla\varphi(h).
\]
This in turn implies that
\[ 
\forall\, x,\,y,\,h\!\in \R^d,\quad \nabla^2F(x)h = \nabla^2F(y)h 
\]
i.e. $\nabla^2F(x)= \nabla^2F(y)$.  This implies that $\nabla^2 F$ is constant $i.e.$ that there exists $S\!\in {\cal S}^{++}(d,\R)$, $a\!\in \R^d$ and $b\!\in \R$ such that 
\[
F(x) = |x|^2_S +\langle a,x\rangle +b
\]
since $F$ is assumed  to be strictly convex (and of course the affine part plays no role).

\medskip
\noindent $\bullet$ We also need a kind of firewall lemma in the next chapter devoted to the ``companion'' empirical measure results related to our Zador like theorem. However it is not powerful enough to help us in the proof of  the lower bound in the theorem due to the control of the size of the walls across all the hypercubes $C_i$ in a non-isotropic setting as emphasized above. 

\bigskip
\noindent {\em Proof.} Due to its technicality, the proof is postponed to an appendix.
\hfill $\Box$ 

\medskip
\noindent {\sc Step 4} {(\em Lower bound for Bregman~III: The case of $P_m$)}. 
Now, with the above Firewall lemma (see Proposition~\ref{lem:FirewallLemma}), we can control the lower bound of the quantization error in each hypercube $C_i$ (by introducing the $\varpi$-interior $C_{i,\varpi}$ of $C_i$ with $\varpi \!\in (0, \frac{L}{2m})$).  Let $\Gamma\subset U$ be a quantizer. One has by the elementary Bayes formula
    \color{black} 
    \begin{align*}
    e_{r,n}(\Gamma, &P_m,\phi_{_F})^r = 2^{-\frac r2}\Big(\frac mL\Big)^d \sum_{i\in I_m} P(C_i) \int_{C_i} \min_{a\in\Gamma} \phi_{_F}(\xi,a)^{  r/2} \mathrm{d}\xi \\
&\ge 2^{-\frac r2}\Big(\frac mL\Big)^d \sum_{i\in I_m} P(C_i) \int_{C_i} \min_{a\in\Gamma\cup\gamma_i} \phi_{_F}(\xi,a)^{  r/2} \mathrm{d}\xi\\
    &\ge2^{-\frac r2}\Big(\frac mL\Big)^d \sum_{i\in I_m}  P(C_i) \int_{C_{i}} \min_{a\in\Gamma\cup \gamma_i}  \big((\nabla^2F_m(c_i)-\ve_m I_d)(\xi-a)^{\otimes2}\big)^{r/2} \mathrm{d}\xi\\
         &\geq 2^{-\frac r2}\Big(\frac mL\Big)^d \sum_{i\in I_m}  P(C_i) \int_{C_{i,\varpi}} \min_{a\in\Gamma\cup\gamma_i}  \big((\nabla^2F_m(c_i)-\ve_m I_d)(\xi-a)^{\otimes2}\big)^{r/2} \mathrm{d}\xi\\
         &= 2^{-\frac r2}\Big(\frac mL\Big)^d \sum_{i\in I_m}  P(C_i) \int_{C_{i,\varpi}} \min_{a\in(\Gamma\cap \mathring{C}_i)\cup\gamma_i}  \big((\nabla^2F_m(c_i)-\ve_mI_d)(\xi-a)^{\otimes2}\big)^{r/2} \mathrm{d}\xi
\end{align*}
\color{black}
where we used the  above firewall lemma (Propsosition~\ref{lem:FirewallLemma}) in the last line to restrict the set over which is taken the minimum. Consequently
\begin{align*}
    e_{r,n}(\Gamma, P_m,\phi_{_F})^r &\ge 2^{-\frac r2}\Big(\frac mL\Big)^d\sum_{i\in I_m} P(C_i) e_{n_i +\nu_i}\big(\mathcal{U}(C_{i,\varpi}),(\nabla^2F_m(c_i)-\ve_mI_d)\big)^r
    \end{align*}
where \textcolor{black}{$n_i = |\Gamma\cap C_i|$ and $\nu_i  |\gamma_i|$ can be taken constant equal to $\nu$} (all the $\gamma_i$ can be chosen in such a way to have the same size according to the Firewall Lemma). 

\color{black}{As the right hand side does not depend on $\Gamma$, this yields
\begin{align*}
    e_{r,n}(P_m,\phi_{_F})^r &\ge 2^{-\frac r2}\Big(\frac mL\Big)^d\sum_{i\in I_m} P(C_i) e_{n_i +\nu}\Big(\mathcal{U}(C_{i,\varpi}), (\nabla^2F_m(c_i)-\ve_mI_d)\Big)^r\\
    & =  2^{-\frac r2}\Big(\frac mL\Big)^d\sum_{i\in I_m} P(C_i) \bigg(\frac Lm-\varpi\bigg)^{r+d}\!\!\!e_{n_i +\nu}\Big(\mathcal{U}\Big(\big[-\tfrac 12, \tfrac 12\big]^d\Big), (\nabla^2F_m(c_i)-\ve_mI_d)\Big)^r.
    \end{align*}
\color{black}    
     Up to an extraction  (still denoted  $n$ for convenience)  we may assume that all the sequences $\frac{n_i+\nu}{n}\to v_i\!\in [0,1]$ as $n\to +\infty$ where the $v_i$, $i\!\in I_m$  satisfy  $\sum_{i\in I_m} v_i \le 1$. Moreover,  by the $\limsup_n$ result for $n^{r/d} e_{r,n}(P_m,\phi_{_F})^r$ established in Step~1, we may also    assume that 
     \begin{align*}
 \Lambda_m &:=     \liminf_{n\to+\infty}n^{\frac rd}\left (2^{-\frac r2}\Big(\frac mL\Big)^d \Bigg(\frac Lm-\varpi\Bigg)^{r+d}\sum_{i\in I_m} P(C_i)e_{n_i +\nu}\Big(\mathcal{U}\Big(\big[-\tfrac 12, \tfrac 12\big]^d\Big), (\nabla^2F_m(c_i)-\ve_mI_d)\Big)^r\right)\\
&<+\infty.
     \end{align*}
  Then, using that  $ \lim_n \Big(\frac{n}{n_i+\nu} \Big)^{\frac rd} = v_i^{-\frac rd}\!\in (0, +\infty]$ combined with  the classical properties of $\liminf_n$, it follows from  Proposition~\ref{lem:FirewallLemma} applied with $S= \nabla^2F_m(c_i)-\ve_mI_d$, $i\!\in I_m$ that 
   \[
 \Lambda_m\ge 2^{-\frac r2}Q_r([0,1]^d\Big( \frac mL\Big)^d \Bigg(\frac Lm-\varpi\Bigg)^{r+d}\sum_{i\in I_m} P(C_i)v_i^{-\frac rd}\det \big( \nabla^2F_m(c_i)-\ve_mI_d\big)^{\frac{r}{2d}}.
     \]
Note that if  any of the $v_i$ is $0$ then $\Lambda_m=+\infty$ which yields a contradiction. Hence $v_i>0$ for every $i\!\in I_m$. This holds for any $\varpi $ small enough so that letting $\varpi \to 0$ yields
 \[
 \Lambda_m\ge 2^{-\frac r2}Q_r([0,1]^d)\bigg( \frac Lm\bigg)^r \sum_{i\in I_m} P(C_i)v_i^{-\frac rd}\det \Big(\nabla^2F_m(c_i)-\ve_mI_d\Big)^{\frac{r}{2d}}.
  \]

At this stage we may apply the reverse H\"older inequality with conjugate exponents $p= -\frac dr$ and $q = \frac{d}{d+r}$ which  shows that
\begin{align*}
\Lambda _m &\ge 2^{-\frac r2} Q_r([0,1]^d)\bigg( \frac Lm\bigg)^r  \left(\sum_{i\in I_m} P(C_i)^{\frac{d}{d+r}} \det \Big( \nabla^2F_m(c_i)-\ve_mI_d\Big)^{\frac{r}{2(d+r)} } \right)^{\frac{d+r}{d}}       \underbrace{\Big(\sum_{i\in I_m} v_i\Big)^{-\frac r d}}_{\geq 1}\\
& \ge 2^{-\frac r2} Q_r([0,1]^d)\bigg( \frac Lm\bigg)^r  \left(\sum_{i\in I_m} P(C_i)^{\frac{d}{d+r} }\det \Big( \nabla^2F_m(c_i)-\ve_mI_d\Big)^{\frac{r}{2(d+r)}}  \right)^{\frac{d+r}{d}}.
\end{align*}

As $\displaystyle \liminf_{n\to+\infty} n^{\frac rd}  e_{r,n}(P_m,\phi_{_F})^r \ge \Lambda_m$ the above right hand side of the inequality is also a lower bound for  $\displaystyle \liminf_{n\to+\infty} n^{\frac rd}  e_{r,n}(P_m,\phi_{_F})^r $.

\smallskip
Finally note that  
\begin{align*}
\bigg( \frac Lm\bigg)^r  \Bigg(\sum_{i\in I_m} P(C_i)^{\frac{d}{d+r} } &\det \!\Big( \nabla^2F_m(c_i)-\ve_mI_d\Big)^{\frac{r}{2(d+r)}}  \Bigg)^{\frac{d+r}{d}}\\
&= \Bigg(\sum_{i\in I_m} \Bigg(\frac{P(C_i)}{\lambda_d(C_i)}\Big)^{\frac{d}{d+r} }\det \Big( \nabla^2F_m(c_i)-\ve_mI_d\Big)^{\frac{r}{2(d+r)}} \lambda(C_i) \Bigg)^{\frac{d+r}{d}}\\
& = \Big\| h_m\det(\nabla^2F_m -\ve_mI_d)\Big\|_{L^{\frac{d}{d+r}}(\lambda_d)}
\end{align*}
so that
 \begin{equation}\label{eq:minorP_m}
 \liminf_{n\to +\infty} n^{\frac rd}  e_{r,n}(P_m,\phi_{_F})^r \ge2^{-\frac r2}  Q_r([0,1]^d) \Big\| h_m\det\Big(\nabla^2F_m -\ve_mI_d\Big)\Big\|_{L^{\frac{d}{d+r}}(\lambda_d)}.
 \end{equation}  
    
%
\noindent {\sc Step 5} ({\em Toward  upper and lower bounds: $P$ versus $P_m$}). 
We need to control the distortions of order $r$ induced by $P= h\cdot \lambda_d$ and that of the approximation $P_m$ of $P$ investigated in the previous steps. To be more precise,  we aim at controlling the following normalized error
\[
    n^{r/d} |e_{r,n}(P,\phi_{_F})^r - e_{r,n}(P_m,\phi_{_F})^r|.
\]
 
Let $\ve \in (0,1)$ and $n\geq \frac{1}{\ve}\vee \frac{1}{1-\ve}$. Set $n_1(\ve)=\lfloor(1-\ve)n\rfloor \geq 1$, and  $ n_2(\ve) = \lfloor(\ve n)^{1/d}\rfloor^d\geq 1 $  so that $n_1+n_2\le n$.

We consider a covering $(C_i)_{i=1, \ldots, n_2(\ve)}$ of $C$ by $n_2(\ve)^{\frac{1}{d}}$ closed hypercubes with edges parallel to the coordinate axis and common length $\frac{L}{n_2(\ve)^{1/d}}$. 
We denote by $\Gamma_{\ve,n}^o\subset C^{(n_2(\ve)^{1/d})}$ the set of their centers. 

One has, for large enough $n$ such that $n_2(\ve)^{1/d}\ge m_0$ (see~\eqref{eq:m0}),
$$
\forall \xi \!\in C,
\quad \min_{a\in\Gamma^o_{\ve,n}} |a-\xi| \leq \frac{\sqrt{d}L}{2n_2(\ve)^{1/d}}
$$
so that
\begin{equation} \label{eq:1}
    n^{\frac{r}{d}}\max_{\xi \in C} \min_{a\in\Gamma_{\ve,n}^o} |\xi-a|^r\leq\frac{d^{\frac{r}{2}}L^r n^{\frac{r}{d}}}{2^r\lfloor(n\ve)^{\frac{1}{d}}\rfloor^r} \leq \frac{d^{\frac{r}{2}}L^r}{2^r\ve^{\frac{r}{d}}}.
\end{equation}

Let $\Gamma_{n,\ve}$ be any  quantizer with values in $U$ of size $n_1(\ve)$ and set $\widetilde \Gamma_n = \Gamma_{n,\ve}\cup \Gamma_{\ve,n}^o $ which has a size at most $n$ by construction.
Thanks to the regularity of $F$ and  the fact that $h$ and $h_m$ are null outside $\bar U_{\ve_0}$, we have  
\begin{align}
\nonumber     n^{\frac{r}{d}}\big|e_{r,n}(\widetilde \Gamma_n&\cap U,P_m,\phi_{_F})^r -e_{r,n}(\widetilde \Gamma_n\cap U, P,\phi_{_F})^r\big| \\
\nonumber      &=    n^{\frac{r}{d}}\left|\int \min_{a\in \widetilde \Gamma_n\cap U}\big(F(\xi)-F(a)-\langle \nabla F(a)\,|\, \xi-a\rangle\big)^{\frac r2}\big( h(\xi)-h_m(\xi)  \big)\lambda_d(\mathrm{d}\xi)  \right|  \\
\nonumber      &\le n^{\frac rd} 2^{-\frac r2} \vertiii{\nabla^2 F}_{C \cap \bar U_{\ve_0}}^{\frac r2} \int_U \min_{a\in \widetilde \Gamma_n\cap U} |\xi-a|^r |h(\xi)-h_m(\xi)| \lambda_d(\mathrm{d}\xi)  \\
  \nonumber      & \leq 2^{-\frac r2}\vertiii{\nabla^2 F}_{C \cap \bar U_{\ve_0}}^{\frac r2}   n^{\frac rd}\max_{\xi \in C} \min_{a\in\widetilde \Gamma_n}|\xi-a|^r\|h_m-h\|_1 \\
 \label{eq:comapardistir}   &\leq \underbrace{\vertiii{\nabla^2 F}_{C \cap \bar U_{\ve_0}} 2^{-\frac r2} \frac{d^{\frac r2} L^r}{2^r}}_{C_{d,r,L,\nabla^2F}}\ve^{-\frac{r}{d}}\|h_m-h\|_1.
\end{align}

\textcolor{black}{$\rhd$ Now we are in position to control $\displaystyle \limsup_n n^{\frac{r}{d}} e_{r,n}(P,\phi_{_F})^r$. 
It follows from~\eqref{eq:comapardistir}, as  $|\widetilde \Gamma_n\cap U|\le |\widetilde \Gamma_n|\le n$, that
\begin{align*}
n^{\frac{r}{d}} e_{r,n}(P,\phi_{_F})^r &\le  n^{\frac{r}{d}} e_{r,n}(\widetilde \Gamma_n\cap U,P,\phi_{_F})^r +C_{d,r,L,\nabla^2F}\|h_m-h\|_1\ve^{-\frac{r}{d}} \\
      &=n^{\frac{r}{d}}\int\min_{a\in\widetilde \Gamma_n\cap U}\Bigg (\Big (\int_0^1(1-u)\nabla^2F(a+u(\xi-a))\mathrm{d}u\Big )(\xi-a)^{\otimes2}\Bigg)^{\frac{r}{2}}P_m(\mathrm{d}\xi)\\
& \quad +C_{d,r,L,\nabla^2F}\|h_m-h\|_1\ve^{-\frac{r}{d}}\\
&\leq n^{\frac{r}{d}}\int\min_{a\in\Gamma_n}\Bigg (\Big (\int_0^1(1-u)\nabla^2F(a+u(\xi-a))\mathrm{d}u\Big )(\xi-a)^{\otimes2}\Bigg)^{\frac{r}{2}}P_m(\mathrm{d}\xi)\\
& \quad +C_{d,r,L,\nabla^2F}\|h_m-h\|_1\ve^{-\frac{r}{d}}.
\end{align*}
This holds for any quantizer $\Gamma_n \subset U$ so that
\[
n^{\frac{r}{d}} e_{r,n}(P,\phi_{_F})^r \leq n^{\frac{r}{d}} e_{r,n_1(\ve)}(P_m,\phi_{_F})^r+C_{d,r,L,\nabla^2F}\|h_m-h\|_1\ve^{-\frac{r}{d}}.
\]
 }
Letting $n$ go to $+\infty$ yields
\begin{align}
  \nonumber   \limsup_{n\rightarrow+\infty} n^{\frac{r}{d}} e_{r,n}(P,\phi_{_F})^r & \leq (1-\ve)^{-\frac{r}{d}} 2^{-\frac r2}Q_r([0,1]^d) \big\|[\det(\nabla^2 F_m)]^{\frac{r}{2d}} h_m \big\|_{L^{\frac{d}{d+r}}(\lambda_d)} \\
    \label{eq:limsupPphir}&\qquad + C_{d,r,L,\nabla^2 F} \|h_m - h\|_1 \ve^{-\frac{r}{d}}.
\end{align}

$\rhd$ Now we deal with the $\displaystyle \liminf_n$ of the normalized $(r,\phi_{_F})$-distorsion. Let us consider again a quantizer $\Gamma_{n,\ve} \subset U$ with size $n_1(\ve)$. With the same notations as those used  in~Step~5, we derive from~\eqref{eq:comapardistir} that for every $n\ge\frac{1}{\ve}\vee \frac{1}{1-\ve}$, 
\begin{align*}
n^{\frac rd} e_{r}(\Gamma_{n,\ve},P,\phi_{_F})^r & \ge n^{\frac rd} e_r(\widetilde \Gamma_n \cap U, P_m, \phi_{_F})-C_{d,r,L,\nabla^2F} \ve^{-\frac rd} \|h_m-h\|_{L^1(\lambda_d)}\\
&\ge n^{\frac rd}  e_{r,n}(P_m, \phi_{_F})^r -C_{d,r,L,\nabla^2F} \ve^{-\frac rd} \|h_m-h\|_{L^1(\lambda_d)}
\end{align*}
since $\Gamma_{n,\ve} \subset \widetilde \Gamma_n\cap U$ and $ |\widetilde \Gamma_n\cap U|\le n$. As the right hand side of the above inequality no longer depends on $\Gamma_{n,\ve}$, this implies that
\[
n^{\frac rd} e_{r,n(\ve)} (P,\phi_{_F})^r \ge n^{\frac rd}  e_{r,n}(P_m, \phi_{_F})^r -C_{d,r,L,\nabla^2F} \ve^{-\frac rd} \|h_m-h\|_{L^1(\lambda_d)}.
\]
Now, as $n\mapsto n_1(\ve)$ is surjective from $\N$ onto $\N$, since $1-\ve\!\in (0,1)$ and non-decreasing to $+\infty$, 
\[
\lim_n n^{\frac rd}e_{r,n} (P,\phi_{_F})^r = \lim_n n_1(\ve)^{\frac rd}e_{r,n_1(\ve)} (P,\phi_{_F})^r.
\]
Consequently
\[
\lim_n n^{\frac rd}e_{r,n} (P,\phi_{_F})^r \cdot \lim_n \Big(\frac{n}{n_1(\ve)}\Big)^{\frac rd} \ge  \liminf_n n^{\frac rd}  e_{r,n}(P_m, \phi_{_F})^r -C_{d,r,L,\nabla^2F} \ve^{-\frac rd} \|h_m-h\|_{L^1(\lambda_d)}
\]
i.e.
\[
\lim_n n^{\frac rd}e_{r,n} (P,\phi_{_F})^r \ge (1-\ve)^{\frac rd}\Big( \liminf_n n^{\frac rd}  e_{r,n}(P_m, \phi_{_F})^r -C_{d,r,L,\nabla^2F} \ve^{-\frac rd} \|h_m-h\|_{L^1(\lambda_d)}\Big).
\]

Hence
\textcolor{black}{
\begin{align}
\nonumber \liminf_n n^{\frac rd}e_{r,n} (P,\phi_{_F})^r &\ge (1-\ve)^{\frac rd}\big( \lim_n n^{\frac rd}  e_{r,n}(P_m, \phi_{_F})^r - C_{d,r,L,\nabla^2F} \ve^{-\frac rd} \|h_m-h\|_{L^1(\lambda_d)}\big)\\
\nonumber&\ge (1-\ve)^{\frac rd} \big(2^{-\frac r2}Q_r([0,1]^d) \big\|[\det(\nabla^2 F_m)]^{\frac{r}{2d}} h_m \big\|_{L^{\frac{d}{d+r}}(\lambda_d)}\\
& \hskip 4cm  - C_{d,r,L,\nabla^2F} \ve^{-\frac rd} \|h_m-h\|_{L^1(\lambda_d)}\big),
\label{eq:liminfPphir}
\end{align}
}
where we used the conclusion of Step~3 in the second line.

\smallskip
\noindent  {\sc Step 6} {\em  (Differentiation of measure, $P$ absolutely continuous)}. \label{diffmeasure}

To conclude in the compactly supported case (for absolutely continuous distributions $P$), we need to let $m\to +\infty$ in the upper and lower  bounds established in Step~5. To this end, we have to prove that 

\begin{equation}\label{eq:hmtoh}
\|h_m-h\|_{L^1(\lambda_d)}\to 0\quad \mbox{ as }\quad m\to +\infty
\end{equation} 
and 
\begin{equation}\label{eq:detFhmtodetFh}
\big\|[\det(\nabla^2 F)_m-\tilde\ve_mI_d)^{\frac{r}{2d}}\cdot h_m\big \|_{L^{\frac{d}{d+r}}(\lambda_d)}\to \big\|[\det(\nabla^2 F)]^{\frac{r}{2d}} \cdot h \big\|_{L^{\frac{d}{d+r}}(\lambda_d)}\; \mbox{as}\; m\to +\infty 
\end{equation}
with $\tilde \ve_m= 2\ve _m$ or $0$.

Since  we assume in this step that $P$ is absolutely continuous then $h= \frac{dP^a}{\mathrm{d}\lambda_d}= \frac {dP}{\mathrm{d}\lambda_d}$ is a probability density function (w.r.t. the Lebesgue measure) so that $\|h\|_{L^1(\lambda_d)}=1$.
Then by Besicovitch's  differentiation of measure theorem (see e.g.~\cite[Chapter~VI]{Chatterji1973}):
$$
    h_m \rightarrow h \quad \lambda_d\mbox{-}a.s. \quad  \text{ as } \quad m\rightarrow +\infty.
$$
As $h$ an $h_m$ are both probability densities, hence non-negative   with an integral over $U$ equal to $1$, it follows from  Scheff\'e's Lemma that
\begin{equation}\label{hconv}
    \| h_m - h\|_{L^1(\lambda_d)} \rightarrow 0\quad  \text{ as } \quad m\rightarrow +\infty.
\end{equation} 

Before dealing  with the second convergence, we need to establish some $L^{\frac{d}{d+r}}(\lambda_d)$-convergence results and control on $h_m-h$ and $h_m$ respectively. Indeed, using H\"older's inequality with conjugate exponents $1+\frac rd$ and $1+ \frac dr$, we obtain
\begin{align}
 \label{eq:hmd/d+r} \big\|h_m\big\|^\frac{d}{d+r}_{L^{\frac{d}{d+r}}(\lambda_d)} & =\int_{\bar U_{\ve_0}}h^{\frac{d}{d+r}}_m\mathrm{d}\lambda_d \le \Bigg(\int h_m(\xi)\mathrm{d}\xi \Bigg)^{1+\frac rd} \lambda_d(\bar U_{\ve_0}\cap C)^{1+\frac dr} = \lambda_d(\bar U_{\ve_0}\cap C)^{1+\frac dr} \\
\nonumber  \mbox{ and }\hskip 2.5cm  &\\
\nonumber\big \|h-h_m\big\|^\frac{d}{d+r}_{L^{\frac{d}{d+r}}(\lambda_d)}&\le  \Bigg(\int (h-h_m)(\xi)\mathrm{d}\xi \Bigg)^{1+\frac rd} \lambda_d(\bar U_{\ve_0}\cap C)^{1+\frac dr}\\
\label{eq:h-hmd/d+r} & \le   \| h_m - h\|_{L^1(\lambda_d)}^{\frac{d}{d+r}}  \lambda_d(\bar U_{\ve_0}\cap C)^{1+\frac dr} \to 0 \quad \mbox{ as }\quad m\to +\infty. 
\end{align}

Now let us deal with the second quantity of interest. We focus on the case $\widetilde \ve _m =0$ since the case $\widetilde \ve_m = \ve_m$ can be handled likewise.  One checks from the very definition of $\nabla^2F_m$ and $\ve_m$ that, for every $\xi \!\in C \cap \bar U_{\ve_0}$,
\[
\nabla^2F_m(\xi) -  \nabla^2 F(\xi) =\left( \ve_mI_d + \sum_{i\in I_m} \textbf{1}_{C_i}\big( \nabla^2F(c_i)-\nabla^2F(\xi) \big)\right) 
\]
so that
\begin{align*}
\vertiii{\nabla^2F_m(\xi) - \nabla^2F(\xi) }& 
\le   \ve_m+ \sum_{i\in I_m} \textbf{1}_{C_i}
w\big(\nabla^2F,C_i,\tfrac{\sqrt{d}L}{m}\big) \\
& \le (\ve_m +\ve_m) = 2\ve_m. 
\end{align*}
This proves that
\[
\sup_{\xi \in C \cap \bar U_{\ve_0}} \vertiii{\textbf{1}_{\cup_{i\in I_m}C_i} \left(\nabla^2F_m(\xi)  -  \nabla^2 F(\xi)\right)}\le 2\ve_m \to 0\quad \mbox{ as}\quad m\to +\infty.
\]
One shows likewise that 
\[
\sup_{\xi  \in C \cap \bar U_{\ve_0}} \vertiii{\textbf{1}_{\cup_{i\in I_m}C_i}  \left(\nabla^2F_m(\xi) -\ve_mI_d -  \nabla^2 F(\xi)\right)} \le 3\, \ve_m \to 0\quad \mbox{ as}\quad m\to +\infty.
\]

Now note that  $\vertiii{\nabla^2F(\xi)} $ is bounded on the compact $ C \cap \bar U_{\ve_0}$ since $\nabla^2 F $ is continuous on this compact. The same holds true  for $\xi \!\in \cup_{i\in I_mC_i}$  since  $\nabla^2F_m(\xi)= \nabla^2F_m(c_i)$  with $c_i \!\in C\cap \bar U_{\ve}$ if $\xi\!\in C_i$. Consequently,  all these quantities lie in a compact subset $K_{F,\ve_0}$ of ${\cal S}^+(d,\R)\subset \mathbb{M}(d,\R)$, on which the continuous function $\det(\cdot)^{\frac{r}{2d}}$ is uniformly continuous and bounded on this compact. Consequently, 
 \begin{equation}\label{eq:cvgcedet}
\sup_{\xi \in  C \cap \bar U_{\ve_0}}
\bigg|\Big[\det \big(\nabla^2F_m(\xi))\big)\Big]^{\frac{r}{2d}} - \Big[\det \big( \tfrac 12 \nabla^2F(\xi)\big)\Big]^{\frac{r}{2d} }\bigg| \longrightarrow 0   \quad \mbox{ as}\quad m\to +\infty.
 \end{equation}

Now, applying the pseudo-triangle inequality 
\[
\|f+g\|_{L^s(\lambda_d)}^s\le \|f\|_{L^s(\lambda_d)}^s+\|g\|_{L^s(\lambda_d)}^s
\]
satisfied by the pseudo-norms $\|\cdot\|_{L^s(\lambda_d)}$ when $s\!\in (0,1)$, we get 
\begin{align*}
  \Bigg\| \Big [\det\! \big(\nabla^2F_m\big)\Big]^{\frac{r}{2d} }&\cdot h_m   - \Big[\det\! \big(  \nabla^2F\big)\Big]^{\frac{r}{2d} }\cdot h \Bigg\|^{\frac{d}{d+r}}_{L^{\frac{d}{d+r}}(\lambda_d)}\\
  &  \le \underbrace{ \sup_{\xi \in  C \cap \bar U_{\ve_0}}[ \det(\nabla^2F(\xi))]^{\frac{r}{2(d+r)}}}_{=:  C_{F,r,d,\ve_0} }\cdot \| h-h_m\|_{L^{\frac{d}{d+r}}(\lambda_d)}^{\frac{d}{d+r}}\\
\nonumber  &\qquad + \left\| 
 \left|\Big[\det\! \big(\nabla^2F_m(\xi))\big)\Big]^{\frac{r}{2d}}-\Big[\det\! \big( \nabla^2F(\xi)\big)\Big]^{\frac{r}{2d} }\right|h_m \right\|^{\frac{d}{d+r}}_{L^{\frac{d}{d+r}}(\lambda_d)}\\
\nonumber  & \le  C_{F,r,d,\ve_0} \| h-h_m\|_{L^{\frac{d}{d+r}}(\lambda_d)}^{\frac{d}{d+r}} \\
\nonumber  &\qquad+ \|h_m\|_{L^{\frac{d}{d+r}}(\lambda_d)}^{\frac{d}{d+r}}\!\!\left[\sup_{\xi \in C \cap \bar U_{\ve_0}}\!\! \left|\Big[\det \!\big(\nabla^2F_m(\xi))\big)\Big]^{\frac{r}{2d}}\!-\!\Big[\det\! \big( \nabla^2F(\xi)\big)\Big]^{\frac{r}{2d} }\right| \right]^{\frac{d}{d+r}}\\
\nonumber &\le   C_{F,r,d,\ve_0}\| h-h_m\|_{L^{\frac{d}{d+r}}(\lambda_d)}^{\frac{d}{d+r}}\\
 &\qquad + \left[\sup_{\xi \in C \cap \bar U_{\ve_0}} \left|\Big[\det\! \big(\nabla^2F_m(\xi))\big)\Big]^{\frac{r}{2d}}\!-\!\Big[\det\! \big( \nabla^2F(\xi)\big)\Big]^{\frac{r}{2d} }\right|  \right]^{\frac{d}{d+r}}\hskip-0.25cm \textcolor{black}{\lambda_d(\bar U_{\ve_0}\cap C)^{1+\frac dr} }
\end{align*}
where we used~\eqref{eq:hmd/d+r} in the last line to get rid of    $ \|h_m\|_{L^{\frac{d}{d+r}}(\lambda_d)}^{\frac{d}{d+r}}$. Then it follows from 
~\eqref{eq:cvgcedet} and~\eqref{eq:h-hmd/d+r}  that 
\begin{equation}\label{eq:detbregconv}
 \left\| \Big[\det\! \big(\nabla^2F_m\big)\Big]^{\frac{r}{2d} }\cdot h_m - \Big[\det \!\big(  \nabla^2F\big)\Big]^{\frac{r}{2d} }\cdot h \right\|^{\frac{d}{d+r}}_{L^{\frac{d}{d+r}}(\lambda_d)}\to 0\quad \mbox{ as}\quad m\to +\infty.
\end{equation}
At this stage, note  that for any sequence $g_m$, $m\ge 1$ of non-negative functions, if $s\!\in (0,1)$,  
\[
\Big|\int_{\R^d} g_m^s\mathrm{d}\lambda_d-\int_{\R^d} g^s\mathrm{d}\lambda_d\Big|\le \int_{\R^d} |g_m^s-g^s|\mathrm{d}\lambda_d\le \int_{\R^d}  |g_m-g|^s\mathrm{d}\lambda_d
\]
since the function $u\mapsto u^s$ is $s$-H\"older on the real line. Applying this elementary inequality to what precedes yields \textcolor{black}{the announced convergence~\eqref{eq:detFhmtodetFh}}.

\smallskip
\noindent $\rhd$ Now we can let $m\to +\infty$ in both~\eqref{eq:limsupPphir} and~\eqref{eq:liminfPphir} from Step~5. Using ~\eqref{hconv} and~\eqref{eq:detbregconv}, we obtain:
$$
    \limsup_{n\rightarrow+\infty} n^{\frac{r}{d}}e_{r,n}(P,\phi_{_F})^r\leq (1-\ve)^{-\frac{r}{d}}2^{-\frac r2}Q_r([0,1]^d)\|\det(\nabla^2F)^{\frac{r}{2d}}h\|_{L^{\frac{d}{d+r}}(\lambda_d)}
$$
and
\begin{equation*} 
\liminf_n n^{\frac rd} e_{r,n} (P,\phi_{_F})^r \ge (1-\ve)^{\frac rd} 2^{-\frac r2}Q_r([0,1]^d)\big\|[\det(\nabla^2F)]^{\frac{r}{2d}}h\big\|_{L^{\frac{d}{d+r}}(\lambda_d)}.
\end{equation*}

Finally, letting $\ve\rightarrow 0$ yields
\begin{equation}\label{eq:ZadorPbornee}
    \lim_{n\rightarrow+\infty}     n^{\frac{r}{d}} e_{r,n}(P,\phi_{_F})^r  = 2^{-\frac r2} Q_r([0,1]^d) \|\det(\nabla^2 F)^{\frac{r}{2d}}h\|_{L^{\frac{d}{d+r}}(\lambda_d)}.
\end{equation}

At this stage the theorem is proved for absolutely continuous distributions satisfying Assumption~\eqref{eq:HypoZadorBreg-chap2}$(i)$.

\smallskip
\noindent {\sc Step 7}
{\em  (The case of probability distributions with a singular component)}.

Rather than a direct proof adapted from that in~\cite[Section~6.2]{GrafL2000}, we will rely on the ``regular'' $L^r$-Zador's Theorem with the Euclidean norm as a loss function.  Assume $P=P^s$ i.e.  $P$ is singular  (still with a support contained in $\bar U_{\ve_0}$)  and let $\Gamma\subset U$ be a quantizer of size at most $n$. Let $[\nabla F]_{\rm Lip, \ve_0}$ denote the Lipschitz coefficient of $\nabla F$ on $C \cap \bar  U_{\ve_0}$, with in mind that $\nabla^2f $ is bounded on this convex set. One has for every quantizer $\Gamma\subset U$
\begin{align*}
e_r(\Gamma, P^s,\phi_{_F})^r & \le  \big(\tfrac 12 [\nabla F]_{\rm Lip, \ve_0}\big)^{\frac r2}  e_r(\Gamma, P^s, |\cdot|^2)^r
\end{align*}
owing to~\eqref{eq:ComparEuclide}.  Now, as the  closure $\bar U$ of $U$ in $\R^d$ is a nonempty closed convex and the projection ${\rm Proj}_{\bar U}:\R^d\to \bar U$ on $\bar U$  is $1$-Lipschitz and coincides with identity on $\bar U$,
\[
 e_r(\Gamma, P^s, |\cdot|^2)^r= \int_{U} \min_{a\in \Gamma}|\xi-a|^rP^s(\mathrm{d}\xi)\le \int_{U} \min_{a\in \Gamma}|\xi-{\rm Proj}_{\bar U} (a)|^rP^s(\mathrm{d}\xi).
\]
By the same argument, it is clear that 
\begin{align*}
e_{r,n}(P^s, |\cdot|^2) & = \inf\Bigg\{\int \min_{a\in \Gamma}|\xi-a|^rP^s(\mathrm{d}\xi) : \Gamma\subset \R^d, |\Gamma|\le n\Bigg\}\\
&=\inf\Bigg\{\int_{U} \min_{a\in \Gamma}|\xi- a|^rP^s(\mathrm{d}\xi):\Gamma\subset \bar U, \, |\Gamma|\le n \Bigg\}
\end{align*}
so that
\begin{align*}
e_{r,n}(P^s,\phi_{_F})^r &= \inf \Bigg\{ \int_{U} \min_{a\in \Gamma}\phi_{_F}(\xi,a)^{\frac r2}P^s(\mathrm{d}\xi): \Gamma\subset U,\, |\Gamma| \le n\Bigg\}\\
& \le \Big(\tfrac 12 [\nabla F]_{\rm Lip, \ve_0}\Big)^{\frac r2} e_{r,n}(P^s, |\cdot|^2)^r.
\end{align*}

Then, one concludes by regular $L^r$-Zador's Theorem for the (canonical) Euclidean norm (having in mind that  at this  stage $P= P^s$ is supposed to have a compact support included in $U$) that
\[
\limsup_n n^{\frac rd}e_{r,n}(P^s, \phi_{_F})^r\le   \big(\tfrac 12 [\nabla F]_{\rm Lip, \ve_0}\big)^{\frac r2}  \lim_n n^{\frac rd} e_{r,n}(P^s, |\cdot|^2)=0.
\]
Now let us deal with  the general case  $P= P^a +P^s$ where both measures are non zero. Let $\ve \!\in (0,1)$, let $n> \frac{1}{\ve}\vee \frac{1}{1-\ve}$  and let $n_1= n_1(\ve) = \lfloor(1-\ve)n\rfloor$ and $n_2(\ve) = \lfloor n\ve\rfloor$. One has $n_1+n_2 \le n$ so that if $\Gamma_n = \Gamma^{(1)}_{n_1} \cup\Gamma^{(2)}_{n_2}$ with $|\Gamma^{(i)}_{n_i}| \le n_i$, we derive 
\begin{align}
\label{eq:decompPaPs}e_r(\Gamma, P, \phi_{_F})^r & = P^a (U) e_r\Big(\Gamma, \frac{P^a}{P^a(U)}, \phi_{_F}\Big)^r+P^s(U) e_r\Big(\Gamma, \frac{P^s}{P^s(U)}, \phi_{_F}\Big)^r\\
\nonumber& \le P^a (U) e_r\Big(\Gamma^{(1)}_{n_1}, \frac{P^a}{P^a(U)}, \phi_{_F}\Big)^r+P^s(U) e_r\Big(\Gamma^{(2)}_{n_2}, \frac{P^s}{P^s(U)}, \phi_{_F}\Big)^r
\end{align}
so that 
\begin{align*}
e_{r,n}(P, \phi_{_F})^r & \le P^a (U) e_{r,n_1(\ve)}\Big( \frac{P^a}{P^a(U)}, \phi_{_F}\Big)^r+P^s(U) e_{r,n_2}\Big(\frac{P^s}{P^s(U)}, \phi_{_F}\Big)^r.
\end{align*}
\textcolor{black}{This in turn entails}
\begin{align*}
\limsup_n n^{\frac rd}e_{r,n}(\P, \phi_{_F})^r &\le P^a (U) \limsup_n \Big(\frac{n_1(\ve)}{n}\Big)^{\frac rd}n_1(\ve)^{-\frac rd} e_{r,n_1(\ve)}\Big(  \frac{P^a}{P^a(U)}, \phi_{_F}\Big)^r \\
&\qquad + P^s(U)  \limsup_n \Big(\frac{n_2(\ve)}{n}\Big)^{\frac rd} n_2(\ve)^{-\frac rd}     e_{r,n_(\ve)}\Big( \frac{P^s}{P^s(U)}, \phi_{_F}\Big)^r.
\end{align*}
Then, we get 
\begin{align*}
\lim_n n^{\frac rd}e_r(\Gamma, P, \phi_{_F})^r  & \le  P^a (U)(1-\ve)^{\frac rd} \lim_n n^{\frac rd}e_r\Bigg(\frac{P^a}{P^a(U)}, \phi_{_F}\Bigg)^r\\
&\qquad +  P^s (U)\ve^{\frac rd} \limsup_n n^{\frac rd}e_r\Big( \frac{P^s}{P^s(U)}, \phi_{_F}\Big)^r\\
&=   P^a (U)(1-\ve)^{\frac rd} \lim_n n^{\frac rd}e_r\Bigg(\frac{P^a}{P^a(U)}, \phi_{_F}\Bigg)^r +0.
\end{align*}
Letting $\ve\to 0$  yields using the result obtained in Step~6 for absolutely continuous $P$, 
\begin{align*}
\lim_n n^{\frac rd}e_r(\Gamma, P, \phi_{_F})^r & \le P^a (U) \lim_n n^{\frac rd} e_{r,n}\Bigg(\frac{P^a}{P^a(U)}, \phi_{_F}\Bigg)^r\\
&= \underbrace{ \int_U h\, \mathrm{d}\lambda_d}_{=1}\,2^{-\frac r2}Q_r([0,1]^d)\Big \|\det(\nabla^2 F)^{\frac{r}{2d}}  \frac{h}{\int_Uh\,\mathrm{d}\lambda_d}\Bigg\|_{L^{\frac{d}{d+r}}(\lambda_d)}\\
& = 2^{-\frac r2} Q_r([0,1]^d) \|\det(\nabla^2 F)^{\frac{r}{2d}} h\|_{L^{\frac{d}{d+r}}(\lambda_d)}.
\end{align*}
Starting again from~\eqref{eq:decompPaPs}, we  derive this time that 
\begin{align*}
e_{r}(\Gamma, P, \phi_{_F})^r & \ge P^a (U) e_{r,n_1(\ve)}\Bigg(\Gamma,  \frac{P^a}{P^a(U)}, \phi_{_F}\Bigg)^r 
\end{align*}
which yields by the same reasoning as above
\[
\liminf_n n^{\frac rd}e_r(\Gamma, P, \phi_{_F})^r   \ge    P^a (U)(1-\ve)^{\frac rd} \lim_n n^{\frac rd}e_r\Bigg(\frac{P^a}{P^a(U)}, \phi_{_F}\Bigg)^r 
\]
and, as a consequence,  by letting $\ve \to 0$
\[
\liminf_n n^{\frac rd}e_r(\Gamma, P, \phi_{_F})^r   \ge   2^{-\frac r2}  Q_r([0,1]^d) \|\det(\nabla^2 F)^{\frac{r}{2d}} h\|_{L^{\frac{d}{d+r}}(\lambda_d)}.
\]
At this stage the theorem is proved for distributions $P$ supported by $U$ and satisfying Assumption~\eqref{eq:HypoZadorBreg-chap2}$(i)$.

\smallskip
\noindent  {\sc Step 8} {\em  (Extension to the non-compact  case)}.
%
Let $K_k = [-k,k]^d\cap \bar U_{\frac 1k}^d$, $k\ge 1$, be a sequence of compact sets such that $\cup_{k\ge 1}^{\;\uparrow} K_k= U$ and let $P$ be a general distribution such that $P(U)=1$.

\smallskip
\noindent $\rhd$ {\em $\displaystyle \liminf_n$ side}. The  distribution $P$ can be decomposed into 
\begin{equation}\label{eq:decompP}
    P = P(K_k) P(\cdot\,|\,K_k)  + P(K_k^c) P(\cdot\,|\,K_k^c).
\end{equation}
In particular, 
\[
P \ge P(K_k) P(\cdot\,|\,K_k)  
\]
so that
\[
e_{r,n}(P, \phi_{_F})^r \ge  P(K_k) e_{r,n}(P_k, \phi_{_F})^r.
\]
As $P_k$ has a compact support included into $U$, what precedes implies that
\[
\lim_n n^{\frac rd}e_{r,n}(P_k, \phi_{_F})^r =  2^{-\frac r2}Q_r([0,1]^d) \Big \|[\det(\nabla^2F)]^{\frac{r}{2d}} \frac{h\textbf{1}_{K_k}}{P(K_k)} \Big\|_{L^{\frac{d}{d+r}}(\lambda_d)}
\]
so that, for every $k\ge 1$, 
\begin{align*}
\liminf_n n^{\frac rd}e_{r,n}(P, \phi_{_F})^r &\ge P(K_k)  2^{-\frac r2}Q_r([0,1]^d)
 \Big \|[\det(\nabla^2F)]^{\frac{r}{2d}}\frac{h\textbf{1}_{K_k}}{P(K_k)} \Big\|_{L^{\frac{d}{d+r}}(\lambda_d)}\\
 & = 2^{-\frac r2}Q_r([0,1]^d)
 \Big \|[\det(\nabla^2F)]^{\frac{r}{2d}} h\textbf{1}_{K_k}\Big\|_{L^{\frac{d}{d+r}}(\lambda_d)}.
\end{align*}
Letting $k$ go to infinity implies
\[
\liminf_n n^{\frac rd}e_{r,n}(P, \phi_{_F})^r \ge 2^{-\frac r2}Q_r([0,1]^d)
 \Big \|[\det(\nabla^2F)]^{\frac{r}{2d}} h \Big\|_{L^{\frac{d}{d+r}}(\lambda_d)}\!\in (0, +\infty],
\]
owing to Beppo Levi's  monotone convergence theorem. This proves claim~$(b)$ of the theorem since no moment assumption has been made so far on $P$ nor on the global boundedness of the operator norms $\vertiii{\nabla^2F(x)}$ over $U$.

\smallskip 
\noindent $\rhd$ {\em $\displaystyle \limsup_n$ side under assumption~\eqref{eq:HypoZadorBreg-chap2}$(ii)$}. First note that it follows from the integrability assumption~\eqref{eq:rintegZador}  that  there exists $\delta >0$ such that 
\begin{equation}\label{eq:rintegZador2}
\int_{\R^d} |\xi|^{r(1+\delta)}P(\mathrm{d}\xi)=\int_{U} |\xi|^{r(1+\delta)}P(\mathrm{d}\xi) < + \infty.
\end{equation}
 This will be the key for this step of the proof (and the only place where it will be called upon). As $\nabla F$ is Lipschitz on $U_{\eta}$ if $\eta>0$ or on $U$ if $\eta=0$ by assumption, we know that $F$ is sub-quadratic on these (convex) sets i.e.  
\[
|F(\xi)| \le C_{F,\eta}(1+|\xi|^2)
\]
so that under above moment assumption, the  $(r,\phi_{_F})$-Bregman quantization  have sense.
 
  Let $\ve \in (0,1)$. The  distribution $P$ can be decomposed into 
\begin{equation}\label{eq:decompP2}
    P = P(K_k) P(\cdot\,|\,K_k)  + P(K_k^c) P(\cdot\,|\,K_k^c).
\end{equation}
Set $n_1 =n_1 (\ve) =  \lfloor (1-\ve)n\rfloor$ and $n_2 =n_2 (\ve) = \lfloor \ve n\rfloor$ and let $\Gamma_{n_1}^{\ve, 1}$ and $\Gamma_{n_2}^{\ve,2}$ be quantizers  of size $n_1$ and  $n_2$ of the conditional distributions $P(\cdot\,|\,K_k)$ and $P(\cdot\,|\,K_k^c$) respectively such that 
\[
e_{r,n_1}\big(P(\,\cdot\,|\,K_k),\Gamma_{n_1}^{\ve,1},\phi_{_F}\big) \le e_{r,n_1}\big(P(\,\cdot\,|\,K_k),\phi_{_F}\big) \big(1+1/n_1\big)\]
and
\[
 e_{r,n_2}\big(P(\,\cdot\,|\,K^c_k),\Gamma_{n_2}^{\ve,2},\phi_{_F}\big) \le e_{r,n_2}\big(P(\,\cdot\,|\,K^c_k),\phi_{_F}\big) \big(1+1/n_2\big).
\]
Hence, 
\begin{align*}
    e_{r,n}(P,\phi_{_F})^r &\leq e_{r,n}(P,\Gamma_{n_1}^{\ve,1}\cup \Gamma_{n_2}^{\ve,2},\phi_{_F})^r\\
    &\leq P(K_k) e_{r,n_1}\big(P(\,\cdot\,|\,K_k),\Gamma_{n_1}^{\ve,1},\phi_{_F}\big)^r + P(K_k^c) e_{r,n_2}(P(\,\cdot\,|\,K_k^c),\Gamma_{n_2}^{\ve,2},\phi_{_F})^r\\
   & \leq  P(K_k) e_{r,n_1}\big(P(\,\cdot\,|\,K_k),\phi_{_F})^r (1+1/n_1\big)^r\\
   & \qquad+ P(K_k^c) e_{r,n_2}\big(P(\,\cdot\,|\,K_k^c),\phi_{_F}\big)^r (1+1/n_2)^r.
\end{align*}
Since $n_1+n_2\leq n$, then 
\begin{align}
    \nonumber\limsup_{n\rightarrow+\infty} n^{\frac{r}{d}} e_{n,r}&(P,\phi_{_F})^r \leq\\
    &  P(K_k) \lim_{n\rightarrow+\infty} 
   \Big (\frac{n}{\lfloor n(1-\ve)\rfloor}\Big )^{\frac{r}{d}}\limsup_{n\rightarrow+\infty} 
   \Big[n_1(\ve)e_{r,n_1}\big(P(\,\cdot\,|\,K_k),\Gamma_{n_1}^{\ve,1}\phi_{_F}\big)^r\Big] \\ 
    \nonumber &\qquad + P(K_k^c) \lim_{n\rightarrow+\infty} \Big ( \frac{n}{\lfloor\ve n\rfloor} \Big)^{\frac{r}{d}} \limsup_{n\rightarrow+\infty} 
    \Big[n_2(\ve)e_{r,n_2}(P(\,\cdot\,|\,K_k^c),\Gamma_{n_2}^{\ve,2},\phi_{_F})^r\Big] \\
    \nonumber &\le P(K_k) (1-\ve)^{-\frac{r}{d}} \limsup_{n\rightarrow+\infty} n_1^{\frac{r}{d}} e_{r,n_1}\big(P(\,\cdot\,|\,K_k),\phi_{_F}\big)^r \\
    \label{eq:MaojorCondit} & \qquad + P(K_k^c) \ve^{-\frac{r}{d}} \limsup_{n\rightarrow+\infty} n_2^{\frac{r}{d}} e_{r,n_2}\big(P(\,\cdot\,|\,K_k^c),\phi_{_F}\big)^r,
\end{align}
where we used in the last two lines that $1+1/n_i(\ve)\to 1$ for  $i=1,2$ as $n\to +\infty$. 

 We know from what precedes (Steps~4 and 5) that, as $K_k$ is a compact set, 
 \begin{align}
 \nonumber  \limsup_{n\rightarrow+\infty} n_1^{\frac{r}{d}} e_{r,n_1}\big(P(\,\cdot\,|\,K_k),\phi_{_F}\big)^r &=   \lim_{n\rightarrow+\infty} n_1^{\frac{r}{d}} e_{r,n_1}\big(P(\,\cdot\,|\,K_k),\phi_{_F}\big)^r \\
\label{eq:terme1}  &= 2^{-\frac r2} Q_r([0,1]^d)\Big\|\det(\nabla^2 F)^{\frac{r}{2d}} \frac{h\mathbf{1}_{K_k}}{P(K_k)}\Big\|_{L^{\frac{d}{d+r}}(\lambda_d)}.
 \end{align}

Now, to control the quantization error on (the non-compact set) $K_k^c$, we remark that $\nabla  F$ is Lipschitz continuous on $U_\eta$ if $\eta>0$ and on $U$ if $\eta =0$. It follows \textcolor{black}{from~\eqref{eq:ComparEuclide}}  \textcolor{magenta}{(see Property~\ref{property:2chap2})}
\begin{equation}\label{eq:majorregL^2}
\forall\, \xi, \,x \!\in U_{\eta}, \quad 0<\phi_{_F}(\xi,x) \le \tfrac 12 [\nabla F]_{U_{\eta}, \rm Lip} |\xi-x|^2.
\end{equation}
Using the same arguments as those used in Step~7 for singular distributions, we derive that for any distribution $Q$ such that $\int_U|\xi|^{r+\delta}Q(\mathrm{d}\xi)<+\infty$ for some $\delta>0$,  we deduce that 
\begin{equation}\label{leq:BregEuc}
    e_{r,n}(Q,\phi_{_F}) \leq\tfrac 12 [\nabla F]_{U_{\eta}, \rm Lip}  e_{r,n}(Q,|\cdot|^2),
\end{equation}
where  $[\nabla F]_{U_{\eta}, \rm Lip} <+\infty$ owing to Assumption~\eqref{eq:HypoZadorBreg-chap2}. Be careful that in regular optimal quantization theory, if we (temporarily) denote by  $e^{reg}_{r,n}(Q,|\cdot|)$ the $L^r$-optimal quantization error w.r.t the Euclidean norm, then $e_{r,n}(Q,|\cdot|^2)$ would read $e^{reg}_{r,n}(Q,|\cdot|)$.

This allows us to call upon $L^r$-Pierce's lemma in a non trivial way for such distributions  $Q$  applied here with respect to  the canonical Euclidean norm. By non trivial, we mean that the right hand side  of the below inequality is finite.  

\begin{lemma}{(Pierce Lemma for  regular quantization, see~\cite{LuPag23}[Corollary~2.1.13], \cite[Theorem~5.2$(b)$]{PagSpring2018}, \cite{LuPa2008} and \cite[Section 6.2]{GrafL2000})}\label{lem:Pierce} Let $d\ge1$. 
Let $r\geq1$ and let $\delta>  0$. There exists  a real constant $\widetilde C^{vor}_{d,r,\delta}>0$ such that for every distribution 
$Q$ on $\big(\mathbb{R}^d, {\cal B}or(\mathbb{R}^d)\big)$
\begin{equation*}
\forall\, n\ge 1,\quad     e^{reg}_{r,n}(Q,|\cdot |) \leq \widetilde C^{vor}_{d,r,\delta}\, n^{-\frac{1}{d}} \sigma_{r(1+\delta)}(Q),
\end{equation*}
where, for every $s>0$, $\displaystyle \sigma_s(Q) = \inf_{a\in\mathbb{R}^d}\Big(\int_{\mathbb{R}^d}|\xi-a|^sQ(\mathrm{d}\xi)\Big)^{1/s}\leq\Big(\int_{\mathbb{R}^d}|\xi|^sQ(\mathrm{d}\xi)\Big)^{1/s}$.
\end{lemma}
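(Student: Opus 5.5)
The plan is to use a \emph{random quantization} argument: an empirical (i.i.d.) quantizer drawn from a fixed, heavy-tailed, scale-free sampling density. Then exploit the translation and dilation equivariance of $e^{reg}_{r,n}$ to reduce to a normalized situation.

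\textbf{Normalization.} Both sides of the inequality behave identically under translations $\xi\mapsto \xi-a$ and dilations $\xi\mapsto \xi/\lambda$. The left-hand side is invariant under translation and scales by $\lambda^{-1}$ under dilation, since $\Gamma\mapsto (\Gamma-a)/\lambda$ is a bijection between $n$-quantizers. The same holds for $\sigma_{r(1+\delta)}(Q)$. So I may assume $\sigma_{r(1+\delta)}(Q)<+\infty$ (otherwise there is nothing to prove). I then translate by a (near-)minimizing $a$ and dilate by $\sigma_{r(1+\delta)}(Q)$, so that $\int|\xi|^{r(1+\delta)}Q(\mathrm{d}\xi)\le 2$. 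The degenerate case $\sigma=0$ (a Dirac mass) is trivial. It then suffices to show $e^{reg}_{r,n}(Q,|\cdot|)^r\le C\,n^{-r/d}\big(1+\int|\xi|^{r(1+\delta)}\mathrm{d}Q\big)$ with $C=C_{d,r,\delta}$.

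\textbf{Random quantizer.} I fix the probability density $g(y)=c_{d,\delta}(1+|y|)^{-d(1+\delta')}$ for a suitable $\delta'\in(0,\delta]$ tied to $\delta$, with distribution $G$. Let $Y_1,\ldots,Y_n$ be i.i.d. with density $g$. Since the minimum over quantizers is at most an average, Fubini gives
\[
e^{reg}_{r,n}(Q,|\cdot|)^r\le \int \E\min_{i\le n}|\xi-Y_i|^r\,Q(\mathrm{d}\xi)=\int\!\!\int_0^{\infty}\big(1-G(B(\xi,t^{1/r}))\big)^n\mathrm{d}t\,Q(\mathrm{d}\xi).
\]
For fixed $\xi$ I split the $t$-integral at $s=t^{1/r}=1+|\xi|$.

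On $\{s\le 1+|\xi|\}$, the density satisfies $g\ge c(1+2|\xi|+1)^{-d(1+\delta')}$ on $B(\xi,s)$. Hence $G(B(\xi,s))\ge c'\,s^d(1+|\xi|)^{-d(1+\delta')}$, and using $(1-u)^n\le e^{-nu}$ the integral is at most $\int_0^\infty \exp\big(-c'n\,t^{d/r}(1+|\xi|)^{-d(1+\delta')}\big)\mathrm{d}t=C\,n^{-r/d}(1+|\xi|)^{r(1+\delta')}$.

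On $\{s>1+|\xi|\}$, I use $1-G(B(\xi,s))\le G(\{|y|\ge s-|\xi|\})\le C(s-|\xi|)^{-d\delta'}$. This tail contribution is integrable in $t$ and is $O(n^{-r/d}(1+|\xi|)^{r})$ as soon as $n\,d\delta'>r$ (indeed it decays geometrically in $n$ once $n$ is large, because $G(B(\xi,s))\ge G(B(0,1))>0$ there). For the finitely many $n\le n_0:=\lceil r/(d\delta')\rceil$, I use instead the trivial bound $e^{reg}_{r,n}\le e^{reg}_{r,1}\le\sigma_r(Q)\le\sigma_{r(1+\delta)}(Q)$, which is absorbed into the constant via $n^{-1/d}\ge n_0^{-1/d}$.

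Integrating in $Q$ and choosing $\delta'\le\delta$, so that $(1+|\xi|)^{r(1+\delta')}$ is $Q$-integrable by the normalized moment, gives the claim.

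\textbf{Main obstacle.} The delicate step is the large-$s$ regime. The heavy tail of $g$ is what makes the bound scale-free, but it makes $\big(1-G(B(\xi,s))\big)^n$ only polynomially small in $s$. One must check carefully that the $t=s^r$ integral converges and is uniformly $O(n^{-r/d})$ times a polynomial in $|\xi|$ of degree at most $r(1+\delta)$. My first attempt is to bound it crudely by $(1+|\xi|)^r\int_1^\infty \min\big(1,C u^{-d\delta'}\big)^{n}\,r u^{r-1}\mathrm{d}u$ after substituting $s=(1+|\xi|)u$. If the resulting $n$-dependence proves too weak, the fallback is to modify $g$ by mixing in a compactly supported component, so that the ball $B(\xi,s)$ for $s>1+|\xi|$ captures a fixed positive mass and the decay in $n$ becomes exponential.
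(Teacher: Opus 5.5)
Your proof is correct and is essentially the random-quantization argument behind the Pierce lemma in the references the paper quotes; the paper itself gives no proof. The one difference is that you sample i.i.d. points directly in $\mathbb{R}^d$ from the radial density $\propto(1+|y|)^{-d(1+\delta')}$. The standard proofs, as far as I recall, instead treat $d=1$ and pass to $\mathbb{R}^d$ by product quantizers; your route avoids the $\ell^\infty$/norm-equivalence step, and it never uses $r\ge 1$.

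The large-$s$ step should be written out explicitly rather than left to a parenthetical. Take $q:=G(\{|y|\ge 1\})<1$ and a fixed integer $k>r/(d\delta')$. Then
\[
\bigl(1-G(B(\xi,s))\bigr)^n\le q^{\,n-k}\min\bigl\{1,\,C(s-|\xi|)^{-d\delta'}\bigr\}^{k},\qquad s>1+|\xi|,
\]
and after the substitution $s=(1+|\xi|)u$ the tail contribution is at most $C_k\,q^{\,n-k}(1+|\xi|)^r$.

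The geometric factor is indispensable. Your crude bound $\int_1^\infty\min\{1,Cu^{-d\delta'}\}^n\,ru^{r-1}\,\mathrm{d}u$ is in general only $O(1/n)$: for $d=1$ with the sharp constant $C=1$ it equals $r/(n\delta'-r)$, which is too weak when $r>d$. However, that factor already comes from $g$ itself, so there is no need to mix a compactly supported component into $g$.
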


Let us apply this lemma with  $Q= P(\,\cdot\,|\,K_k^c)$ which satisfies the appropriate integrability assumption owing to the global integrability assumption~\eqref{eq:rintegZador2} satisfied by  $P$. This yields
\begin{align*}
\limsup_n n^{r/d}  e_{r,n}(P(\,\cdot\,|\,K^c_k),\phi_{_F})^r& \le \tilde C_{F,d,r,\delta} \left(\frac{\int_{K_k^c} |\xi|^{r+\delta} P(\mathrm{d}\xi)}{P(K_k^c)}\right)^{r/(r+\delta)}\\
&\le \tilde C_{F,d,r,\delta} \left(\frac{\int |\xi|^{r+\delta} P(\mathrm{d}\xi)}{P(K_k^c)}\right)^{r/(r+\delta)}
  \end{align*}
  where $C_{F,d,r,\delta}=  \big(\tfrac 12 [\nabla F]_{\rm Lip}C^{vor}_{d,r,\delta}\big)^r$.

Plugging~this inequality and~\eqref{eq:terme1} into~\eqref{eq:MaojorCondit} yields
\begin{align}
    \nonumber\limsup_{n\rightarrow+\infty} n^{\frac{r}{d}} e_{n,r}(P,\phi_{_F})^r &\leq  P(K_k) (1-\ve)^{-\frac{r}{d}} 2^{-\frac r2} Q_r([0,1]^d)\Big\|\det(\nabla^2 F)^{\frac{r}{2d}} \frac{h\mathbf{1}_{K_k}}{P(K_k)}\Big\|_{L^{\frac{d}{d+r}}(\lambda_d)} \\
  \nonumber  &\quad + P(K_k^c) \ve^{-\frac{r}{d}} C_{F,d,r,\delta} 
    \left(\frac{\int_{\mathbb{R}^d} |\xi|^{r+\delta} P(\mathrm{d}\xi) }{P(K_k^c)}  \right)^{r/(r+\delta)}\\
   \nonumber  & = (1-\ve)^{-\frac{r}{d}} 2^{-\frac r2} Q_r([0,1]^d) \Big\|\det(\nabla^2 F)^{\frac{r}{2d}} h\mathbf{1}_{K_k} \Big\|_{L^{\frac{d}{d+r}}(\lambda_d)}\\
   \nonumber   &\quad + P(K_k^c)^{\delta/(r+\delta)} \ve^{-\frac{r}{d}} C_{F,d,r,\delta} 
    \left(\int_{\mathbb{R}^d} |\xi|^{r+\delta} P(\mathrm{d}\xi)  \right)^{r/(r+\delta)}.
\end{align}

Now, note that  $ P(K_k^c)\to 0$ and $h\mathbf{1}_{K_k}\uparrow h$ as $k\rightarrow\infty$ so that by Beppo Levi's  monotone convergence  theorem (for the first term on the right hand side of the above inequality), for every $\ve \!\in (0,1)$, 
\begin{equation*}
   \limsup_{n\rightarrow+\infty} n^{\frac{r}{d}} e_{n,r}(P,\phi_{_F})^r \le (1-\ve)^{-\frac{r}{d}} 2^{-\frac r2}Q_r([0,1]^d) \Big\|\det(\nabla^2 F)^{\frac{r}{2d}} h \Big\|_{L^{\frac{d}{d+r}}(\lambda_d)}.
\end{equation*}

Then letting  $\ve\rightarrow 0$, yields
$$
\limsup_{n\rightarrow+\infty} n^{\frac{r}{d}}  e_{r,n}(P,\phi_{_F})^r \leq 2^{-\frac r2} Q_r([0,1]^d)\|\det(\nabla^2 F)^{\frac{r}{2d}}h\|_{L^{\frac{d}{d+r}}(\lambda_d)}.
$$
On the other hand, it follows from~\eqref{eq:decompP2} that 
\begin{equation}\label{eq:liminfineq}
    e_{r,n}(P,\phi_{_F})^r \geq P(K_k)e_{r,n}\big(P(\cdot\,|\,K_k),\phi_{_F}\big)^r
\end{equation}
which yields 
\begin{equation*}
    \liminf_{n\rightarrow +\infty}n^{\frac{r}{d}}e_{r,n}(P,\phi_{_F})^r \geq 2^{-\frac r2} Q_r([0,1]^d)\|\det(\nabla^2 F)^{\frac{r}{2d}}h\mathbf{1}_{C_k}\|_{L^{\frac{d}{d+r}}(\lambda_d)}.
\end{equation*}
Still, by the monotone convergence theorem, we obtain by  letting $k\rightarrow\infty$,
\begin{equation*}
    \liminf_{n\rightarrow +\infty}n^{\frac{r}{d}}e_{r,n}(P,\phi_{_F})^r \geq 2^{-\frac r2} Q_r([0,1]^d)\|\det(\nabla^2 F)^{\frac{r}{2d}}h\|_{L^{\frac{d}{d+r}}(\lambda_d)}.
\end{equation*}
This completes the proof of claim~$(a)$.


\medskip
\noindent $(b)$ The moment assumption is only involved in the last step (Step~8) to establish the $\displaystyle \limsup_n$ side of the sharp rate. As for the $\displaystyle \liminf_n$ side we only rely on~\eqref{eq:liminfineq} so that no moment assumption on $P$ is needed (beyond the one ensuring the existence of the $(r,\phi_{_F})$-mean quantization error). To be more precise we rely on the sharp rate for the case of distribution with compact support included in $U$ that we apply to the non-decreasing sequence $K_k$, $k$ large enough, introduced in Step~8. Then, for every $k$ large enough
\begin{align*}
\liminf_n n^{\frac 1d} e_{r,n}(P,\phi_{_F})&\ge P(K_k) \liminf_n  n^{\frac 1d} e_{r,n}\big(P(\cdot\,|\,K_k),\phi_{_F}\big)\\
							     & = P(K_k) \frac{1}{\sqrt{2}} Q_r([0,1]^d)^{\frac 1r}\Big\|{\rm det}(\nabla^2 F)^{\frac{r}{2d}}\frac{h}{P(K_k)}\mbox{\bf 1}_{K_k}\Big\|_{L^{\frac{d}{d+r}}(\lambda_d)}\\
							     &=\frac{1}{\sqrt{2}} Q_r([0,1]^d)^{\frac 1r}\Big\|{\rm det}(\nabla^2 F)^{\frac{r}{2d}}h\mbox{\bf 1}_{K_k}\Big\|_{L^{\frac{d}{d+r}}(\lambda_d)}.
\end{align*}
The conclusion follows by Beppo Levi's monotone convergence Theorem by letting $k\uparrow +\infty$ since $U= \bigcup^{\,\uparrow}_{k\ge 1} K_k$.

This completes the proof of Theorem~\ref{thm:ZadorBregman}. \hfill $\Box$

\bigskip
This result can be considered as slightly disappointing since it requires positive  definiteness  of $\nabla^2 F$, at least everywhere on $U$.

\begin{corollary}[An upper--bound when $F$ is simply $C^2$ and convex]. If $F:U\to \mathbb{R}^d$ is $C^2$, convex with a bounded Hessian on $U$. Then
\[
\limsup_n n^{\frac 1d} e_{r,n}(P, \phi_{_F}) \le \frac{1}{\sqrt{2}} Q_r([0,1]^d)^{\frac 1r} \Big\|{\rm det}(\nabla^2F)^{\frac{r}{2d}}h \Big\|^{\frac 1r}_{L^\frac{d}{d+r}(\lambda_d)}.
\]
\end{corollary}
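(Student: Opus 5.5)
The plan is a perturbation argument that reduces the statement to Theorem~\ref{thm:ZadorBregman}$(a)$. Only the $\limsup_n$ side has to be established, and the positive definiteness of $\nabla^2F$ is used in that theorem essentially for the lower bound (ellipticity~\eqref{eq:ellipticity} and the firewall lemma). For $\eta>0$, set
\[
F_\eta(x)=F(x)+\tfrac{\eta}{2}|x|^2 .
\]
Then $F_\eta$ is $C^2$ and strictly convex, with $\nabla^2F_\eta=\nabla^2F+\eta I_d\in{\cal S}^{++}(d,\R)$ everywhere on $U$, and $\nabla^2 F_\eta$ is still bounded on $U$. Since $\phi_{F_\eta}(\xi,x)=\phi_{_F}(\xi,x)+\frac{\eta}{2}|\xi-x|^2\ge \phi_{_F}(\xi,x)$, every quantizer $\Gamma\subset U$ satisfies $e_r(\Gamma,P,\phi_{_F})\le e_r(\Gamma,P,\phi_{F_\eta})$. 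Taking infima gives
\[
e_{r,n}(P,\phi_{_F})\le e_{r,n}(P,\phi_{F_\eta}) \quad\text{for every } n.
\]

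Next, I apply Theorem~\ref{thm:ZadorBregman}$(a)$ to $F_\eta$, under the same hypotheses on $P$ as in that theorem. Assumption~\eqref{eq:HypoZadorBreg-chap2}$(ii)$ holds with $\eta=0$, because the Hessian of $F_\eta$ is bounded. The integrability condition~\eqref{eq:rintegZador} for $F_\eta$ follows from the one for $F$, since $|F_\eta(\xi)|\le |F(\xi)|+\frac{\eta}{2}|\xi|^2$ and $|\xi|^{2}\le |\xi|^{2(1+\delta)}\vee 1$. The theorem then yields
\[
\limsup_n n^{\frac1d}e_{r,n}(P,\phi_{_F})\le 2^{-\frac12}Q_r([0,1]^d)^{\frac1r}\big\|\det(\nabla^2F+\eta I_d)^{\frac{r}{2d}}h\big\|^{\frac1r}_{L^{\frac{d}{d+r}}(\lambda_d)} .
\]

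Finally, I let $\eta\downarrow 0$. The map $\eta\mapsto\det(\nabla^2F(\xi)+\eta I_d)$ is nonincreasing as $\eta\downarrow0$, and it converges pointwise to $\det\nabla^2F(\xi)\ge 0$. It is also dominated for $\eta\le1$ by $(\sup_U\vertiii{\nabla^2F}+1)^d$. The domination argument runs as follows:
\begin{itemize}
\item The integrand $\big(\det(\nabla^2F+\eta I_d)^{\frac{r}{2d}}h\big)^{\frac{d}{d+r}}$ is bounded by a constant times $h^{\frac{d}{d+r}}$.
\item Under the moment assumption, $h^{\frac{d}{d+r}}$ is integrable by the Hölder remark following Theorem~\ref{thm:reg_zador}.
\item Dominated (or monotone) convergence then gives convergence of the $L^{\frac{d}{d+r}}$ quasi-norms, which yields the announced bound.
\end{itemize}

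The main obstacle is bookkeeping, namely checking that the hypotheses of Theorem~\ref{thm:ZadorBregman}$(a)$ are inherited by $F_\eta$; the integrability condition is the only point needing care. If $U$ is unbounded and one only wants to use hypothesis $(i)$ (compact support), no such issue arises. When $\det\nabla^2F$ vanishes on a set of positive $P^a$-measure, the limit bound is simply smaller, and nothing more is claimed.
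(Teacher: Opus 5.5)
Your proposal is correct and follows essentially the same route as the paper: you perturb $F$ by a quadratic ($F+\ve|x|^2$ there, $F+\frac{\eta}{2}|x|^2$ here), use $\phi_{F_\eta}\ge\phi_{_F}$ to compare the optimal errors, apply Theorem~\ref{thm:ZadorBregman}$(a)$ to the perturbed function, and let the perturbation vanish by dominated convergence. The differences are minor: you bound $\det(\nabla^2F+\eta I_d)$ by $(\sup_U\vertiii{\nabla^2F}+1)^d$ via eigenvalues rather than by Hadamard's inequality, and you are slightly more careful than the paper, since you check that the integrability condition~\eqref{eq:rintegZador} carries over to $F_\eta$ and you keep the factor $2^{-1/2}$ that the paper's displayed bound omits.
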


\noindent {\em Proof.} For every $\ve\!\in (0,1)$, set 
\[
F_{\ve}(x) = F(x)+\ve |x|^2, \quad x\!\in\mathbb{R}^d.
\]

The function $F_{\ve}$ is strictly convex (in fact $\ve$-convex) with  $\nabla^2 F_{\ve} = \nabla^2F + 2\ve I_d$. Then, by linearity, 
\[
\phi_{\ve}(\xi,x)  = \phi_{_F}(\xi,x)   + \ve |\xi-x|^2
\]
Hence $\nabla^2 F_{\ve} = \nabla^2F + 2\ve I_d$.
Consequently $e_{r,n}(P, \phi_{_F}) \le e_{r,n}(P, \phi_{_{\ve}})$ so that
\begin{align}
\nonumber \limsup_n n^{r/d} e_{r,n}(P, \phi_{_F})^r& \le \limsup_n n^{r/d} e_{r,n}(P, \phi_{F_\ve})^r\\
\label{eq:upper-bound}
 &= Q_r([0,1]^d) \Big\|{\rm det}(\nabla^2F_{\ve})^{\frac{r}{2d}}h \Big\|_{L^{\frac{d}{d+r}}(\lambda_d)}.
\end{align}
Let us recall that Hadamard's inequality for determinants  reads on a square matrix $[a_{ij}]_{1\le i, j\le d}$
$$
\big|{\rm det}[a_{ij}]_{1\le i,j\le d} \big|\le \prod_{j=1}^d |[a_{\cdot j }]|\le \|A\|_{\rm Frob}^d,
$$
where $ \|A\|_{\rm Frob}= \sqrt{{\rm Tr}(AA^*)}$. As a consequence
\[
0\le {\rm det}\big(\nabla^2F(x)+2\ve I_d\big) \le C_d\big(\vertiii{ \nabla^2F}_{\sup}+ 2\sqrt{d}\ve\big)^d
\]
remains  bounded as $x$ varies. The moment $(r+\delta)$-assumption made on $P$ implies that $h\!\in L^{\frac {d}{d+r}}(\lambda_d)$ so that by dominated convergence,
\[
\Big\|{\rm det}(\nabla^2F_{\ve})^{\frac{r}{2d}}h \Big\|_{L^{\frac{d}{d+r}}(\lambda_d)}\to \Big\|{\rm det}(\nabla^2F)^{\frac{r}{2d}}h \Big\|_{L^{\frac{d}{d+r}}(\lambda_d)}\quad \mbox{as}\quad \ve \to 0.
\]
This completes the proof by letting $\ve \to 0$ in~\eqref{eq:upper-bound}.
\hfill $\Box$ 

\section{Zador's Theorem for matrix-valued fields of symmetric positive definite matrices}\label{sec:Zadorfieldmatrix}
Assume  that $F$ is twice differentiable. One checks that when $\xi$ and $a$ are close enough in $U$, then
\begin{equation*}
    \phi_{_F}(\xi, a) \simeq \tfrac 12(\xi - a)^T \nabla^2 F(a)(\xi - a).
\end{equation*}
As a consequence, at least when the quantization level $n$ is large,  and with the exception of the codewords $a$ which correspond to unbounded Bregman Voronoi cells
\[ 
\min _{a\in \Gamma} \phi_{_F}(\xi,a) \simeq  \tfrac 12\min_{a\in \Gamma} (\xi - a)^T \nabla^2 F(a)(\xi - a).
\]
Hence, one may reasonably guess  that using
\begin{equation}\label{eq:Hloss}
 H_{_F}(\xi,a)= (\xi - a)^T \nabla^2 F(a)(\xi - a)
\end{equation}
as a similarity  instead of $\phi_{_F}$ will produce a similar quantization (or classification), up to a $\sqrt{2}$ factor in terms of resulting error. 

Note by the way that, by Schwartz's Theorem, any such vector field is the Hessian of a $C^2$-strictly convex function.

This also suggests to directly study fields of symmetric positive  definite matrices. This is the object of the Theorem below whose proof is quite similar to that developed for the Bregman divergence $\phi_{_F}$.

\begin{theorem}[Zador like theorem for fields of positive definite matrices]\label{thm:ZadorFields}
Let  $U\subset \mathbb{R}^d$ be a nonempty open convex subset of $\mathbb{R}^d$, let $S : U\subset \mathbb{R}^d \rightarrow {\cal S}^{++}(d,\R)$ be a   continuous matrix valued vector field such that 
\[
 \forall\, x\!\in U, \quad S(x) \!\in{\cal S}^{++}(d,\mathbb{R}).
\]

\noindent $(a)$ Let  $P$ a probability distribution supported by $U$  i.e. $P(U)=1$ such that 
$$
\int_U |\xi|^{r+\delta}P(\mathrm{d}\xi)<+\infty\quad\mbox{for some $\delta>0$.}
$$ 
If
\begin{equation}\label{eq:HypoZadorField}
\left\{\begin{array}{ll}
(i) &{\rm supp}(P)\mbox{ is compact and included in $U$}\\
\mbox{or }\qquad &\\
(ii)& \exists \,\eta \ge 0 \; \mbox{ s.t. }\; {\rm supp}_U(P)\subset U_{\eta} \;\mbox{ and }\; \sup_{x\in U_\eta}\vertiii{S(x)}<+\infty
\end{array}\right.
\end{equation}
 with the convention  $U_0= U$, then (with obvious notations for the induced mean quuanization errors)
\begin{equation*}
    \lim_{n\rightarrow+\infty} n^{\frac1d} e_{r,n}(P,H_{_F}) =  Q_r([0,1]^d)^{\frac 1r} \big\|\det(S)^{\frac{r}{2d}}\cdot h \big\|^{\frac 1r}_{\frac{d}{d+r}},
\end{equation*}
where $h=\frac{\mathrm{d}P^a}{\mathrm{d}\lambda_d}$ denotes the density of the absolutely continuous part $P^a$ of $P$ with respect to the Lebesgue  measure $\lambda_d$ on $\mathbb{R}^d$. 

\smallskip
\noindent $(b)$ For any distribution $P$ supported by $U$, one has 
\begin{equation*}
    \liminf_{n\rightarrow+\infty} n^{\frac 1d } e_{r,n}(P,H_{_F}) \ge    Q_r([0,1]^d)^{\frac 1r}\big\|\det(S)^{\frac{r}{2d}}\cdot h\big\|^{\frac 1r}_{\frac{d}{d+r}}.
\end{equation*}
\end{theorem}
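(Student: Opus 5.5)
The similarity measure here is $H(\xi,a)=(\xi-a)^{\top}S(a)(\xi-a)$, with $S$ evaluated at the codeword and $S$ replacing $\nabla^2F$ without the factor $\tfrac12$. The plan is to rerun Steps~1--8 of the proof of Theorem~\ref{thm:ZadorBregman} with this measure. The Taylor representation~\eqref{eq:bregmadivFC2bis} is replaced by the exact formula for $H$. The $2^{-r/2}$ factor disappears, which is why the constant is $Q_r([0,1]^d)$ and not $2^{-r/2}Q_r([0,1]^d)$.

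\textbf{Compactly supported, absolutely continuous $P$.} As in Steps~1--6, enclose ${\rm supp}(P)$ in a hypercube $C$, tessellate it into $(C_i)_{i\in I_m}$, and set
\[
\ve_m=\max_{i\in I_m} w\big(S,C_i,\tfrac{\sqrt d L}{m}\big)\to 0,
\]
using uniform continuity of $S$ on $C\cap\bar U_{\ve_0}$. For $\xi,a\in C_i$ we then have the exact sandwich
\[
\big(S(c_i)-\ve_mI_d\big)(\xi-a)^{\otimes2}\le H(\xi,a)\le \big(S(c_i)+\ve_mI_d\big)(\xi-a)^{\otimes2}.
\]
This replaces~\eqref{eq:magotphi1} and~\eqref{eq:minorphi}, and needs no integral over segments.

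\textbf{Upper bound.} With the local assignment rule and Proposition~\ref{prop:ZadorS} applied to $S(c_i)+\ve_mI_d$, the reverse H\"older allocation $n_i=\lfloor nx_i\rfloor$ gives the analogue of~\eqref{eq:limsupPm} for $P_m$. Step~5 compares $P$ with $P_m$ by adding the grid $\Gamma^o_{\ve,n}$. When $a$ lies in the grid (inside $C\cap\bar U_{\ve_0}$), we use $H(\xi,a)\le \sup_{C\cap\bar U_{\ve_0}}\vertiii{S}\,|\xi-a|^2$. Step~6 then follows verbatim, via differentiation of measures, Scheff\'e's lemma and uniform continuity of $\det^{r/(2d)}$ on compacts. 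Singular parts (Step~7) are handled through $H\le \vertiii{S}_{\sup}|\xi-a|^2$ and the classical Zador theorem. For codewords outside the compact set, we note that projecting onto $\bar U$ is no longer obviously harmless, since $S$ is evaluated at $a$. To avoid this, in the upper bound we simply restrict to codewords in $C\cap\bar U_{\ve_0}$, which only increases the error.

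\textbf{Lower bound: the main obstacle.} The hard part is a firewall lemma for $H$. Since $S$ is taken at the codeword, a codeword $x\notin C_i$ far away in a region where $S(x)$ is small could in principle be closer to $\xi\in C_{i,\varpi}$ than anything in $C_i$. I would first use $\lambda_{\min}(S)\ge\kappa_{\min}>0$ on $C\cap\bar U_{\ve_0}$, which compares $H$ with the Euclidean distance there. For $x\in U$ outside this compact set there is no lower bound on $S(x)$. So I would instead prove the lower bound for the restricted infimum over $\Gamma\subset C\cap\bar U_{\ve_0}$ and argue separately that far codewords can be discarded. Under (ii), or for compact support, I would try to show that restricting $\Gamma$ to a neighbourhood of the support costs nothing asymptotically. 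Failing that, I would enlarge the compact set $K$ so that $\inf_{U\setminus K}\lambda_{\min}(S)$ is still controlled. Inside the compact set the firewall is built as in the appendix: points $\gamma_i\subset\partial C_{i,\varpi}$ on a mesh fine relative to $\varpi\kappa_{\min}/\vertiii{S}_{\sup}$. Then for every $\xi\in C_{i,\varpi}$ and every $x\notin C_i$ there is $a\in\gamma_i$ on the segment $[\xi,x]$, close enough that
\[
H(\xi,a)\le \big(S(a)\big)(\xi-a)^{\otimes 2}<\kappa_{\min}|\xi-x|^2\le H(\xi,x),
\]
using continuity of $S$ and $|\xi-a|\le|\xi-x|-\varpi/2$. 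With the firewall in hand, Step~4 goes through unchanged, with $S(c_i)-\ve_mI_d$ in place of $\nabla^2F_m(c_i)-\ve_mI_d$.

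\textbf{Non-compact case.} Step~8 carries over. The $\liminf$ side, which gives claim~(b), uses $P\ge P(K_k)P(\cdot\,|\,K_k)$ and monotone convergence. For the $\limsup$ side under (ii), we use $H(\xi,a)\le \sup_{U_\eta}\vertiii{S}\,|\xi-a|^2$ for $a\in U_\eta$; this requires the quantizer of $P(\cdot\,|\,K_k^c)$ to lie in $U_\eta$, which projection onto the closed convex set $\bar U_\eta$ essentially provides. We then invoke Pierce's Lemma~\ref{lem:Pierce} with the $(r+\delta)$-moment, and let $k\to\infty$ and then $\ve\to0$.
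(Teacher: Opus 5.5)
\textbf{Gap 1: the far-codeword step cannot be done, because the statement is false as written.} Your outline follows the paper's own route: rerun Steps~1--8 and reuse the firewall lemma. You correctly locate where it breaks. But the step you leave open cannot be carried out: neither discarding codewords far from ${\rm supp}(P)$, nor enlarging $K$ so that $\inf_{U\setminus K}\lambda_{\min}(S)$ is controlled, is possible under the stated hypotheses. Here is a counterexample with $H(\xi,a)=(\xi-a)^\top S(a)(\xi-a)$. Take $d=1$, $U=(0,1)$, $S(x)=x$ (this is even $F''$ for the strictly convex $F(x)=x^3/6$), and $P=\mathcal{U}([\frac14,\frac34])$. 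Then (i), (ii) with $\eta=0$, and the moment condition all hold. Yet for a one-point quantizer $\{a\}$,
\[
\int H(\xi,a)^{\frac r2}P(\mathrm{d}\xi)=a^{\frac r2}\int|\xi-a|^rP(\mathrm{d}\xi)\le a^{\frac r2}\longrightarrow 0\quad\mbox{as } a\downarrow 0 .
\]
So $e_{r,n}(P,H)=0$ for every $n\ge 1$, while the right-hand sides of (a) and (b) are positive. The same happens on $U=\R^d$ with $S(x)=e^{-|x|^2}I_d$.

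The paper's remark that the proof ``relies on the same firewall lemma'' does not cover this. The Bregman firewall rests on \eqref{eq:minorphiF}, namely $\phi_{_F}(\xi,x)\ge\phi_{_F}(\xi,\zeta)$ for $\zeta\in[\xi,x]$. That inequality comes from $\nabla^2F\ge 0$ along the whole segment. By contrast, $H(\xi,\cdot)$ reads $S$ only at the codeword and has no monotonicity along rays. An extra hypothesis is therefore needed. Possible choices are:
\begin{enumerate}
\item $\inf_{x\in U}\lambda_{\min}(S(x))>0$;
\item more generally, $H(\xi,x)$ stays bounded away from $0$ as $x$ leaves the compacts of $U$, uniformly for $\xi$ in compacts;
\item codewords restricted to a fixed compact subset of $U$.
\end{enumerate}
Under any of these, codewords at distance $\gg L/m$ from $C_i$ are beaten by every point of $C_i$ once $m$ is large.

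\textbf{Gap 2: your local firewall inequality is too crude for anisotropic $S$.} Even granting such a hypothesis, the chain $S(a)(\xi-a)^{\otimes 2}<\kappa_{\min}|\xi-x|^2$ fails, with $a$ near $\tau(\xi,x)$ as you choose it. Writing $u=(x-\xi)/|x-\xi|$, the chain requires
\[
\frac{|x-\xi|}{|\tau(\xi,x)-\xi|}\gtrsim\sqrt{\frac{u^\top S(\tau(\xi,x))u}{\kappa_{\min}}} .
\]
But this ratio can be as small as $1+O(\varpi m/L)$: take $\xi=c_i$ and $x$ just outside $C_i$. So for $u$ along a top eigendirection of an anisotropic $S(c_i)$, the chain fails as $\varpi\to 0$. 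This is precisely the anisotropy issue the paper stresses.

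The fix is to compare the two quadratic forms in the common direction $u$. For $|x-\xi|\le R\,L/m$, with $R$ depending only on $d$, $\sup\vertiii{S}$ and the ellipticity constant $\kappa$, use
\[
u^\top S(x)u\ge u^\top S(\tau(\xi,x))u-w(S,RL/m)
\quad\mbox{together with}\quad
|x-\xi|^2-|\tau(\xi,x)-\xi|^2\ge\varpi|x-\xi| .
\]
For larger $|x-\xi|$, the crude bound $\kappa|x-\xi|^2\ge\sup\vertiii{S}\,d(L/m)^2$ suffices.

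The gain here is of relative order $\varpi m/L$, while the loss is $w(S,O(L/m))$. This forces $\varpi=\theta L/m$ with $m\ge m(\theta)$. Hence Step~4 is not ``unchanged'': you must let $m\to+\infty$ first, losing a factor $(1-2\theta)^{d+r}$, and only then let $\theta\to 0$. Letting $\varpi\to 0$ at fixed $m$, as in Step~4, does not work here.

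With these two corrections, the rest of your plan goes through: the sandwich $S(c_i)\pm\ve_m I_d$, Steps~5--8, codewords restricted to $C\cap\bar U_{\ve_0}$ for the upper bound, and Pierce's lemma under (ii).
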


We do not detail the proof  which is quite similar to that for the Bregman divergence $\phi_{_F}$. In particular  it relies on the same firewall lemma to establish the lower bounds which is the most demanding part of the proof.

\bigskip
\noindent {\bf Remarks.}  $\bullet$ {\em Shrinking may help}. One can still use the ``shrinking may help'' trick by replacing $U$ by a   subset $V$ such that
\[
V\subset U, \;V \; \mbox{ is convex and} \;\sup_{x\in V}\vertiii{S(x)}<+\infty
\]
which allows, like  for  the existence of optimal quantizers theorems, to weaken the boundedness assumption  made on $S$. This is more general than the above assumption which corresponds to choose $V=U_{\eta}$ when $\eta>0$ (having in mind that $U_{\eta}$ is convex).

\smallskip
\noindent $\bullet$ This theorem  confirms the intuition  that the quantization w.r.t. the Bregman divergence and $\phi_{_F}$ and the Hessian field $S$  have the same sharp rate of quantization up to an obvious $\sqrt{2}$-factor. It  also enlightens the main difference between  quantizations based on Bregman divergence and powers of norms as a similarity measure. By construction quantization w.r.t.. the power of a norm is {\em isotropic} in the senses  that if $\Gamma$ is an optimal quantizer at a level $|\Gamma|$ for (the distribution of)  the random variable $X$ then its translation $a+\Gamma$ will be an optimal quantizer at the same level for (the distribution of)  the translated random variable $a+X$ for any $a\!\in \R^d$. This is clearly no longer the case for optimal quantization  based on $H_{_F}$ as a loss function~\eqref{eq:Hloss} and, due  to their proximity, for optimal quantization w.r.t.. the Bregman divergence $\phi_{_F}$. 

For this reason, it is not clear at all that the recent improvements of  Zador's Theorem obtained in~\cite{LuPag23} for ``regular'' quantization by  similarity measure which  is a power of a norm, can be extended  to our Bregman divergence framework.  To be more precise,  the fact that, when the distribution $P$ is radial, or almost radial in some sense outside a compact set, the moment assumption on the distribution $P$ can be optimally weakened: typically a finite $r$-moment for $P$  is enough when dealing with $L^r$-quantization based on the similarity function  of the form $N(\cdot)^r$, $N(\cdot)$ norm on $\R^d$ to get the sharp rate of Zador's Theorem. Trying to answer this open  question will be the object of future work.
\section*{Appendix: Proof of the firewall lemma}\label{sec:appendix}

\color{black}We first recall for the reader convenience the statement of this key lemma. We adopt the notations of Section~\ref{thm:ZadorBregman}.

\bigskip
\noindent  \bf Proposition} (Firewall Lemma)  {\it Let $C_i\subset C \cap\bar U_{\ve_0}$, $i\!\in I_m$,  be a closed hypercube with edges parallel to the coordinate  axis with length $L/m>0$ and  center $c_i$.
Let $\varpi \!\in (0, L/2m]$ and let $C_{i,\varpi} $ be the  hypercube with edge-length $L/m-2\varpi$ obtained as the image of $C_i$ by the contraction centered at $c_i$ with ratio $1-\varpi$  (see Figure~\ref{fig:firewall}).
Then there exists a finite set $\gamma_i = \gamma_i^{(\varpi)}\subset \partial C_{i,\varpi}$ (boundary of $C_{i,\varpi}$) such that 
$$
\forall \xi \in C_{i,\varpi}, \quad \min_{a\in\gamma_i} \phi_{_F}(\xi,a) \leq \min_{x\in C\setminus C_{i} } \phi_{_F}(\xi,x).
$$
Moreover the cardinality of the sets $\gamma_i$, $i\!\in I_m$,  only depends on the operator norm \linebreak $\displaystyle \vertiii{\nabla^2 F}_{C \cap \bar U_{\ve_0}} := \sup_{\xi\in C \cap \bar U_{\ve_0}}\vertiii{\nabla^2 F(\xi)}$, $\varpi$, $L$, $d$ and the uniform continuity modulus  $w(\nabla^2F, C \cap \bar U_{\ve_0},\cdot)$ on the compact  $C \cap \bar U_{\ve_0}$.}
\color{black}

\medskip 
\noindent \noindent {\em Proof.}
Let $[0,1]^d$. It is clear by a standard covering argument based on the $\ell^{\infty}$-norm  that for very $\rho\!\in (0,1)$, there exists a set $\gamma^{(\rho)}$ of points of $\partial [0,1]^d$ such that 
\[
\forall\, \xi \!\in \partial [0,1]^d, \; \exists\, a \!\in \gamma_{\rho} \quad \mbox{ such that} \quad |\xi-a |\le \rho.
\] 
Let us denote $\nu(\rho)= |\gamma^{(\rho)}|$ the cardinality of $\gamma^{(\rho)}$.

Let us consider for any  $i\!\in I_m$ the hypercube $C_i$ centered at $c_i$ with edges parallel to the coordinate axis and common edge length $\frac Lm-2\varpi$. The hypercube $C_i$ is the image of $[0,1]^d$ by the  similarity defined as the composition of a  translation by a vector $c_i-\frac 12\textbf{1}_d$ with  a dilatation centered at $c_i$ with ratio $\frac Lm$. Consequently the image $\gamma^{(\rho)}_{i,\varpi}$ of $\gamma^{(\rho)}$ by this translation-dilatation satisfies 
\[
\forall\, \xi \!\in \partial C_{i,\varpi}, \; \exists\, a \!\in \gamma^{(\rho)}_{i,\varpi} \quad\mbox{such that}\quad |\xi-a |\le \rho\Big(\frac{L}{m}-2\varpi\Big)\le \rho\frac Lm.
\] 
Throughout the rest of the proof we will denote $\gamma_i$ for simplicity instead of $\gamma^{(\rho)}_{i,\varpi}$.  All these sets $\gamma_i$ clearly have the same cardinality $\nu(\rho)$.

As $F$ is $C^2$, $\nabla^2F$ is uniformly continuous on $\bar U_{\ve_0} $,   there exists for every $\eta>0$ a $\rho= \rho(\eta)$ that we can always choose in $(0, \eta]$ such that the continuity modulus of $\nabla^2 F$ for the operator norm $\vertiii{\,\cdot\,}$ satisfies, 
\begin{equation}\label{eq:unifw}
\forall\, \delta\!\in (0, \rho],\quad w(\nabla^2F , C_i, \rho) \le w(\nabla^2 F, \bar U_{\ve_0},\delta)  \le \eta.
\end{equation}

This $\eta$ will be specified later independently of  $\rho$.

\smallskip
Let $x\!\in C \setminus C_i$ and let $\xi \!\in C_{i, \varpi}$. We consider a generic element $\zeta$ of the geometric segment $(\xi,x)$ of 
the form  
$$
\zeta =\zeta_{\lambda}:= \lambda x +(1-\lambda) \xi= \xi +\lambda (x-\xi),\;\lambda \!\in (0,1).
$$
We want to evaluate $\phi_{_F}(\xi,x)-\phi_{_F}(\xi,\zeta)$ for various values of $\zeta$ as a preliminary computation.

We start by the representation of the Bregman divergence based on  the Taylor formula with integral remainder
\begin{align}
\label{eq:identitphi_F0}\phi_{_F}(\xi,x) &=  \int_0^1 u\nabla^2F(\xi+u(x-\xi))(x-\xi)^{\otimes2}\mathrm{d}u  \\
\label{eq:identitphi_F1}		   & =  \int_0^\lambda u\nabla^2F(\xi+u(x-\xi))(x-\xi)^{\otimes2}\mathrm{d}u  + \int_{\lambda}^1 u\nabla^2F(\xi+u(x-\xi))(x-\xi)^{\otimes2}\mathrm{d}u  
\end{align}
The change of variable $u = \lambda v$ in the first integral yields
\begin{align*}
  \int_0^\lambda u\nabla^2F(\xi+u(x-\xi))(x-\xi)^{\otimes2}\mathrm{d}u & =\lambda^2 \int_0^1 v\nabla^2F(\xi +\lambda v (x-\xi))(x-\xi)^{\otimes 2}\mathrm{d}v\\
  & = \int_0^1 v\nabla^2F(\xi + v(\zeta-\xi))(\zeta-\xi)^{\otimes 2}\mathrm{d}v \\
  & = \phi_{_F}(\xi,\zeta).
\end{align*}
Consequently (with the change of variable $u=1-v$ in the second integral), it follows from~\eqref{eq:identitphi_F1}
\begin{equation}\label{eq:identitephiF}
\phi_{_F}(\xi,x) = \phi_{_F}(\xi,\zeta) + \int_0^{1-\lambda}  (1-v) \nabla^2F(x+v(\xi-x))(\xi-x)^{\otimes2}\mathrm{d}u.
\end{equation}
In particular 
\begin{equation}\label{eq:minorphiF}
\phi_{_F}(\xi,x)\ge \phi_{_F}(\xi, \zeta).
\end{equation}

Let
$$
\mbox{$\Theta(\xi,x) = \xi+ \lambda(\xi,x)(x-\xi)$ where $\lambda(\xi,x)$ is such that  $\Theta(\xi,x) \in \partial C_i$}
$$
 and 
$$
\mbox{$\tau(\xi,x) = \xi+ \hat \lambda(\xi,x)(x-\xi) \;$ where $\;\hat \lambda(\xi,x)$ is such that $\;\tau(\xi,x) \in \partial C_{i,\varpi}$}
$$
as depicted (twice) in Figure~\ref{fig:firewall}.
\begin{figure}[h!]
    \centering
    \includegraphics[scale=0.5]{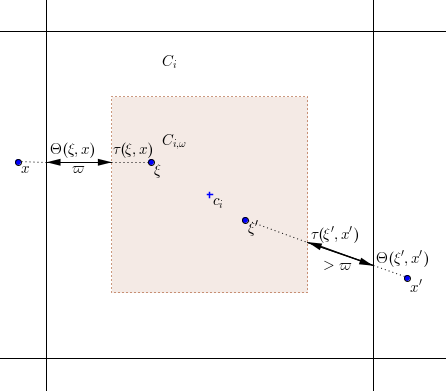}
    \caption{Firewall lemma.}
    \label{fig:firewall}
\end{figure}

It follows from~\eqref{eq:minorphiF} that 
\[
\phi_{_F}(\xi, x)\ge  \phi_{_F}(\xi,\Theta(\xi,x)).
\]
Hence 
\begin{equation}\label{eq:bordCisuffit}
    \inf_{x\in C\setminus \mathring{C}_i} \phi_{_F}(\xi,x) = \inf_{x\in \partial C_i} \phi_{_F}(\xi,x).
\end{equation}
    
Now, setting $\zeta = \tau(\xi,x)$ we derive from~\eqref{eq:identitephiF} that 

\begin{equation}\label{eq:minorphixix}
    \phi_{_F}(\xi,x) = \phi_{_F}(\xi,\tau(\xi,x)) + \int_{0}^{1-\hat\lambda(\xi,x)} (1-v) \nabla^2 F(x+u(\xi-x)))(\xi-x)^{\otimes2}\mathrm{d}u.
\end{equation}

 
For every $y \in \partial C_{i,\varpi}$, it follows from Step~1 that there exists $a_y \!\in \gamma_i$ such that  $|y-a_{y}|\leq \rho$. Then, for every $x\!\in C_i^c$  and for every $ \xi \!\in C_{i,\varpi} $ there exists $a_{\xi,x} = a_{\tau(\xi,x)}$ such that 
$$
|\tau(\xi,x)-a_{\xi,x}|\leq \rho\frac Lm.
$$ 
 
%

For $\xi \!\in C_{i, \varpi}$, let us note $\tilde a_\xi\! \in {\rm argmin}_{b\in \gamma_i} \phi_{_F}(\xi,b)$
a nearest Bregman neighbour of $\xi$ in $\gamma_i$.  

\smallskip
First we note that, by the definition of $a_\xi$, 
 $$
 \phi_{_F}(\xi,\tilde a_\xi)   \le \phi_{_F}(\xi, a_{\xi,x}).
 $$
 
 Starting from~\eqref{eq:identitphi_F0} applied with $\xi$ an $a_{\xi,x}$
\begin{align*}
    \nonumber \phi_{_F}(\xi,\tilde a_{\xi})  & \le \phi_{_F}(\xi, a_{\xi,x}) \\
       \nonumber  							& = \int_0^1 u\nabla^2F(\xi+u(a_{\xi,x}-\xi))(a_{\xi,x}-\xi)^{\otimes 2}\mathrm{d}u\\
   \nonumber   & = \int_0^1 u\,\big(\nabla^2F(a_{\xi,x}+u(x-\xi))- \nabla^2 F(\xi +u(\tau(\xi,x)-x))\big)(\xi-a_{\xi,x})^{\otimes2}\mathrm{d}u \\
   \nonumber   & \quad + \int_0^1 u\,\nabla^2 F(\xi +u(\tau(\xi,x)-x))(\xi-a_{\xi,x})^{\otimes 2}\mathrm{d}u\\
  			 &\le \int_0^1 u\, w(\nabla^2F, C_i, u|a_{\xi,x} -\tau (\xi,x)|)|\xi-a_{\xi,x}|^2\mathrm{d}u \\
\nonumber & \quad +  \int_0^1 u\,\nabla^2 F(\xi +u(\tau(\xi,x)-x))(\xi-a_{\xi,x})^{\otimes 2}\mathrm{d}u\\
 & \le \tfrac 12 w(\nabla^2F, C_i, |\ a_{\xi,x} -\tau (\xi,x)|)|\xi- a_{\xi,x}|^2 \\
         \nonumber   & \quad+  \int_0^1 u\,\nabla^2 F(\xi +u(\tau(\xi,x)-x))(\xi-a_{\xi,x})^{\otimes 2}\mathrm{d}u
    \end{align*}   
   so that 
   \begin{align}
     \nonumber \phi_{_F}(\xi,\tilde a_{\xi})   & \le \tfrac 12  w\big(\nabla^2F, \bar U_{\ve_0},\rho\tfrac Lm\big) \tfrac{dL^2}{m^2}\\
     \label{eq:majorphiFxiatildexi}     &\quad +  \underbrace{ \int_0^1 u\,\nabla^2 F(\xi +u(\tau(\xi,x)-x))(\xi-a_{\xi,x})^{\otimes 2}\mathrm{d}u}_{=:I},
 \end{align}   
where we used in the last line that $ |\ a_{\xi,x} -\tau (\xi,x)|\le \rho\frac Lm$, $|\xi- a_{\xi,x}|^2\le \sup_{y, z\in C_i }|y-z|^2 = \Big(\frac{dL^2}{m^2}\Big)$ and $ w(\nabla^2F, C_i, \rho\frac Lm)\le w\big(\nabla^2F, \bar U_{\ve_0},\rho\tfrac Lm\big)\le \eta$ by~\eqref{eq:unifw} since $m\ge L$.
    
\smallskip
Now we develop the integral $I$ in the right hand side of the last line. This yields
\begin{align}
   \nonumber  I& =  \underbrace{\int_0^1 u\,\nabla^2 F(\xi +u(\tau(\xi,x)-x))(\xi-\tau(\xi,x))^{\otimes 2}\mathrm{d}u}_{= \phi_{_F}(\xi, \tau(\xi,x))}\\
   \nonumber  &\quad +2 \int_0^1 u\,\nabla^2 F(\xi +u(\tau(\xi,x)-x))(\xi-\tau(\xi,x),\tau(\xi,x)-a_{\xi,x})\mathrm{d}u\\
   \nonumber  & \quad + \int_0^1 u\,\nabla^2 F(\xi +u(\tau(\xi,x)-x))(\tau(\xi,x)-a_{\xi,x}))^{\otimes 2}\mathrm{d}u\\
   \nonumber  &\le  \phi_{_F}(\xi, \tau(\xi,x)) +  \tfrac 12 \sup_{y \in C_i}\vertiii{\nabla^2 F(y)}\Big(2\,\sqrt{d}\tfrac Lm\cdot  \rho \tfrac Lm + \big(\rho \tfrac Lm\big)^2\Big)\\
\label{eq:majorI}  & \le   \phi_{_F}(\xi, \tau(\xi,x)) +  \tfrac 12 \big(\tfrac Lm\big)^2 \rho \sup_{y \in \bar U_{\ve_0}}\vertiii{\nabla^2 F(y)}\Big(2\,\sqrt{d} +  \rho \Big).
 \end{align}   

%
Combining~\eqref{eq:minorphixix},~\eqref{eq:majorphiFxiatildexi} and~\eqref{eq:majorI} yields,
\begin{align*}
    \phi_{_F}(\xi,x) - \phi_{_F}(\xi,\tilde a_\xi) &\geq \int_{0}^{1-\hat\lambda(\xi,x)} (1-u)\nabla^2 F(x+u(\xi-x))(\xi-x)^{\otimes2} \mathrm{d} u\\
    & \qquad -   \tfrac 12\Big( \big(\tfrac Lm\big)^2 \rho \sup_{y \in \bar U_{\ve_0}}\vertiii{\nabla^2F(y)}\Big(2\,\sqrt{d} +  \rho \Big)+ \eta \tfrac{dL^2}{m^2} \Big).
\end{align*}

Now, we use the $\nabla^2 F$ is uniformly elliptic on $\bar U_{\ve_0}$ with lower bound $c_0$ (see Equation~\eqref{eq:ellipticity} in Step~1 of the proof of the  theorem). Consequently
\begin{align*}
  \int_{0}^{1-\hat\lambda(\xi,x)} (1-u)\nabla^2 F(x+u(\xi-x))(\xi-x)^{\otimes2} \mathrm{d} u &\geq c_0 \int_{0}^{1-\hat\lambda(\xi,x)} (1-u)\mathrm{d}u  |x-\xi|^2   \\
    &= \frac{c_0}{2} (1-\hat\lambda(\xi,x)^2) |x-\xi|^2\\
    &\geq \frac{c_0}{2} (1-\hat\lambda(\xi,x)^2)\varpi^2 
\end{align*}
since $\displaystyle \inf_{x\in C\setminus \mathring{C}_i ,\xi\in C_{i, \varpi}} |x-\xi|= \varpi$.
As $x$, $\tau(\xi,x)$ and $\xi$ are aligned one has by  construction  for $\xi \!\in C_{i, \varpi} $ and $x\!\in C\setminus \mathring{C}_i$
\color{black}
\[
\hat\lambda(\xi,x)= \frac{|\tau(\xi,x)-\xi|}{|x-\xi|}  = \frac{|x-\xi|-|\tau(\xi,x)-x|}{|x-\xi|}\le  \frac{|x-\xi|-\varpi}{|x-\xi|}<1
\]
so that, taking advantage of~\eqref{eq:bordCisuffit}, 
\begin{align*}
\sup_{\xi\in C_{i, \varpi},x\in C\setminus \mathring{C}_i }\hat \lambda(\xi,x)&\le C_{\varpi, L, m}:=\sup_{\xi\in C_{i, \varpi},x\in (\partial{C}_i) }\hat \lambda(\xi,x)<1
\end{align*}
since  $C_{i, \varpi}\times  \partial{C}_i$ is compact and $(\xi,x)\mapsto |x-\xi|$ is continuous. Hence
\[
1-\hat \lambda(\xi,x)^2 \ge 1 -C_{\varpi, L, m}^2>0.
\]
Finally, using that $\rho $ is chosen in $(0, \eta]$,
\begin{align*}
    \phi_{_F}(\xi,x) - \phi_{_F}(\xi,\tilde a_\xi) &\geq \frac{c_0}{2}\varpi^2(1 -C_{\varpi, L, m}^2)  -   \frac \eta 2\big(\tfrac Lm\big)^2 \Big(  2\, \sup_{y \in \bar U_{\ve_0}}\vertiii{\nabla^2F(y)} \sqrt{d} + d \Big).
\end{align*}
We can fix  $\eta>0$ small enough so that the righthand side of the above inequality to be positive. Then let $\rho= \rho(\eta)$ satisfying~\eqref{eq:unifw}. Then we specify the sets $\gamma_i$ for each $C_i$ attached to this $\rho$. For such $\gamma_i$, we have for every $\xi\!\in C_{i,\varpi}$ and every $x\!\in C\setminus C_i$, 
\[
\min_{a\in \gamma_i} \phi_{_F}(\xi, a)= \phi_{_F}(\xi, \tilde a_{\xi}) \le \phi_{_F}(\xi,x).
\]
\color{black}
%
This completes the proof. 
\hfill $\Box$

\bibliographystyle{plain} 
\bibliography{these_artic.bib}

\end{document}